\documentclass[a4paper,USenglish,cleveref, autoref, numberwithinsect, thm-restate]{lipics-v2021}

\usepackage{algorithm}
\usepackage{algpseudocode}
\usepackage{amsmath,amsthm,amssymb}
\usepackage{tikz}
\usepackage{subcaption}
\usepackage{graphicx}
\usetikzlibrary{calc}
\usetikzlibrary{decorations.pathmorphing, decorations.pathreplacing, decorations.shapes} 
\usetikzlibrary{shapes, fit, positioning}
\usetikzlibrary{backgrounds}
\usepackage{paralist} 
\usepackage{amsmath}  
\usepackage{amsthm}
\usepackage{amsfonts}
\usepackage{complexity}
\usepackage{float}
\usepackage{todonotes}

 \usepackage{apxproof}
\ifx\proof\inlineproof
  \def\apxmark#1{}
  \def\apxcmark#1{}
  \long\def\onlyapx#1{}
\else
  \def\apxxxmark#1#2{\hyperref[#1]{{\sf#2$\!\!$(*)\,}}}
  \def\apxmark#1{\apxxxmark{#1}{\bfseries}}
  \def\apxcmark#1{\apxxxmark{#1}{$\!\!$}}
  \long\def\onlyapx#1{#1}
\fi
\newtheoremrep{theorem2}[theorem]{Theorem}
\newtheoremrep{proposition2}[theorem]{Proposition}
\newtheoremrep{corollary2}[theorem]{Corollary}
\newtheoremrep{lemma2}[theorem]{Lemma}
\newtheoremrep{observation2}[theorem]{Observation}
\newtheoremrep{definition2}[theorem]{Definition}

\pdfoutput=1 
\hideLIPIcs  
\nolinenumbers

\title{Conflict-Free Coloring Planar Graphs with 4 Colors}

\author{Petr Hlin{\v e}n\'y}{Masaryk University, Brno, Czech Republic}{hlineny@fi.muni.cz}{https://orcid.org/0000-0003-2125-1514}{}
\author{Luk\'a\v{s} M\'alik}{Masaryk University, Brno, Czech Republic}{514189@mail.muni.cz}{https://orcid.org/0009-0001-7961-849X}{}

\authorrunning{P.~Hlin{\v e}n\'y and L.~M\'alik} 

\Copyright{Petr Hlin{\v e}n\'y and Luk\'a\v{s} M\'alik} 

\ccsdesc[500]{Mathematics of computing~Graph coloring}
\ccsdesc[500]{Mathematics of computing~Matchings and factors} 
\ccsdesc[500]{Mathematics of computing~Graph algorithms} 

\keywords{conflict-free coloring, planar graph, matching, Gallai--Edmonds decomposition} 

\funding{Both authors supported by the project 26-21334S of the Czech Science Foundation.}

\EventEditors{John Q. Open and Joan R. Access}
\EventNoEds{2}
\EventLongTitle{42nd Conference on Very Important Topics (CVIT 2016)}
\EventShortTitle{CVIT 2016}
\EventAcronym{CVIT}
\EventYear{2016}
\EventDate{December 24--27, 2016}
\EventLocation{Little Whinging, United Kingdom}
\EventLogo{}
\SeriesVolume{42}
\ArticleNo{23}

\begin{document}
\maketitle

\begin{abstract}
We efficiently conflict-free color every planar graph with $4$ colors.
An (open-neighborhood) \emph{conflict-free coloring} assigns colors to vertices in a way that every vertex $v$ has a neighbor $w$ such that the color of $w$ is distinct from the colors of the other neighbors of~$v$ (i.e., the color of $w$ is unique in the open neighborhood of~$v$).
A previous best upper bound on the conflict-free chromatic number of planar graphs was~$5$, and it is known that $4$ colors are sometimes necessary. Deciding whether, e.g., a planar graph admits a conflict-free coloring with $3$ colors is \NP-complete.
Our approach uses a refined variant of the classical Gallai--Edmonds decomposition and the Four Color Theorem. In fact, our result is equivalent to the Four Color~Theorem.
\end{abstract}

\section{Introduction}

Conflict-free coloring was introduced in the early 2000s in geometric and combinatorial terms, motivated by applications such as frequency assignment in wireless networks~\cite{Even2003CF, smorodinsky2012conflictfreecoloringapplications}.
In this setting, one seeks a coloring such that every relevant region contains a uniquely colored element.
This notion has since evolved into a rich area of research spanning combinatorics, computational geometry, and graph theory.

A natural graph-theoretic variant of the problem considers the neighborhoods of vertices as the relevant regions. 
An \emph{open-neighborhood conflict-free coloring} (shortly a CFON coloring, as used e.g.~in~\cite{Bhyravarapu2020WG}) of a graph $G$ is an assignment of colors to vertices of $G$ such that 
for every vertex $v$ having at least one neighbor, there exists an assigned color that is unique (appears exactly once) in its open neighborhood $N(v)$. 
Note that we always assume $v\not\in N(v)$, that is, we ignore possible loops in~$G$, and parallel edges do not matter.
Several variants of this notion have been studied, including the distinction between \emph{partial} colorings (where only a subset of vertices is colored) and \emph{full} colorings (where each vertex of $G$ is assigned a color), 
and the more restricted \emph{proper} version in which adjacent vertices additionally must receive distinct colors.
In this paper, we study full CFON colorings which are not necessarily~proper.

\medskip
For a historical overview, we briefly review this and other variants of conflict-free colorings on graphs in the literature.
An early work of Abel et al.~\cite{AbelCF} addressed combinatorial bounds for general graphs. Later algorithmic and parameterized aspects of conflict-free colorings with respect to neighborhoods were investigated e.g.\ by Bodlaender, Kolay, and Pieterse~\cite{Bodlaender2021SIDMA}. 

Specifically for planar graphs, the line of research of combinatorial bounds for CFON colorings begins with the aforementioned Abel et al.~\cite{AbelCF}, who established the first general bounds for conflict-free colorings with respect to neighborhoods.
Considerable effort has been devoted to improving these bounds for planar graphs. 
Next, Bhyravarapu and Kalyanasundaram~\cite{Bhyravarapu2020WG} showed that every planar graph admits a partial CFON coloring with at most five colors, and that every outerplanar graph admits such a coloring with at most four colors. Their work also developed general bounds for CFON colorings in terms of several structural graph parameters; these results were later extended in joint work with Mathew~\cite{Sriram}.

A breakthrough for the planar partial CFON case was obtained by Huang, Guo, and Yuan~\cite{Huang}, who proved that every connected graph that is minor-$k$-colorable (that is, every minor of it is properly $k$-colorable) admits a partial CFON coloring with at most $k$ colors. 
As a consequence, every planar graph admits a partial CFON coloring with at most four colors, and so a full CFON coloring with at most five colors by assigning a dummy fifth color to every uncolored vertex.

Note also that a lower bound of $4$ colors in the worst case on planar graphs \cite{AbelCF} is nearly trivial; 
take the $1$-subdivision $G_1$ of a graph $G$ (such as of planar $G=K_4$), then for every edge $uv$ of $G$ the vertices $u$ and $v$ must receive distinct colors in a CFON coloring of $G_1$, or the vertex subdividing $uv$ does not see a unique color, and so at least $\chi(G)$ colors are needed.
The question of whether the maximum CFON chromatic number in the class of planar graphs is $4$ or $5$ remained a central open problem in the area (e.g., \cite[Conjecture~9.1]{Fabrici}).

\medskip
In parallel, several related variants of conflict-free coloring have been studied. 
In particular, \emph{proper conflict-free colorings}, in which adjacent vertices must receive distinct colors, have received attention in recent years. 
Fabrici et al.~\cite{Fabrici} showed, among other results, that every planar graph admits a proper CFON coloring with at most eight colors, and that there exist planar graphs that do not admit such a coloring with five colors. 
Further refinements for planar graphs and related classes include bounds under additional structural restrictions, such as girth or forbidden cycles~\cite{Eko,wang2025proper}. 
Recent works have also investigated asymptotic and structural bounds for proper conflict-free colorings in graphs of large maximum degree~\cite{cranston2024proper,liu2025asymptotically}.

Beyond planar graphs, conflict-free coloring has been studied in a variety of directions, including list variants, algorithmic aspects, and complexity questions~\cite{Gargano,GuptaMathew2026Choosability}. 
In addition, variants motivated by combinatorial games and dynamic settings have recently been explored~\cite{pantoja2025conflict}. 
Altogether, this highlights the richness of the area and the diversity of techniques~involved. 

\subparagraph*{New contribution. }
While leaving the definitions and technical details for later, we state:
\begin{theorem}\label{thm:main}
Every planar graph admits a full open-neighborhood conflict-free (CFON) coloring with at most four colors, and such a coloring can be computed in polynomial time.
\end{theorem}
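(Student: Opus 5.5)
Throughout, "conflict-free" means CFON: every vertex with a neighbor must see some color exactly once in its open neighborhood. The plan is to reduce the problem to proper $4$-coloring a planar minor of $G$, using a matching-type structure. We may assume $G$ is connected, and we ignore isolated vertices.

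\textbf{Reduction to a witness assignment.} It suffices to choose, for every non-isolated vertex $v$, a witness neighbor $w(v)$ and then color so that $w(v)$'s color differs from the colors of all other vertices of $N(v)$. The natural sufficient structure is a set $S\subseteq V(G)$ of \emph{representatives} with two properties:
\begin{itemize}
\item every vertex $v$ has a neighbor in $S$, so $S$ is a total dominating set;
\item if $v$ has several neighbors in $S$, one of them is singled out as the witness.
\end{itemize}
Following the idea of Huang, Guo and Yuan, the vertices of $S$ are colored from $\{1,2,3,4\}$. If we contract suitable stars (each non-$S$ vertex into an adjacent representative), the required distinctness constraints become the edges of a planar minor. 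We then properly $4$-color that minor by the Four Color Theorem, in polynomial time by the Robertson--Sanders--Seymour--Thomas algorithm.

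\textbf{The obstacle: non-$S$ vertices.} In the partial version, non-$S$ vertices stay uncolored. In the full version every vertex gets one of the same four colors, so a non-$S$ vertex $u$ adjacent to $v$ might duplicate the color of $w(v)$. My plan is to pair vertices via a matching.

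\begin{enumerate}
\item Take a maximum matching $M$ and its Gallai--Edmonds decomposition $(D,A,C)$.
\item For a matched edge $uv$, let $u$ and $v$ be each other's witnesses, and give both endpoints of the edge a common color class requirement.
\item Contract every edge of $M$ to get a planar graph $G/M$.
\item Unmatched vertices lie in $D$. Handle them by attaching each one to a neighbor in $A$, or by using near-perfect matchings of the factor-critical components.
\end{enumerate}

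The constraint we need is: for each $v$, the color of $w(v)$ differs from the colors of all other neighbors of $v$. When $w(v)$ is $v$'s matching partner, this says that the contracted vertex $\{v,w(v)\}$ must differ from every neighbor of $v$ in $G/M$. That is exactly properness of a coloring of $G/M$, which is planar, so four colors suffice.

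\textbf{The hard step: unmatched vertices.} The difficulty is making this consistent for vertices left unmatched by $M$, and for pairs where both endpoints see each other's colors. Here I need the refined Gallai--Edmonds decomposition:
\begin{enumerate}
\item In each odd component of $D$, choose the exposed vertex carefully, so that it has a neighbor $a\in A$.
\item Assign the exposed vertex as an extra member of $a$'s contracted bag; more generally, contract a star centered at $a$.
\item Check that the witness of each leaf of the star is its center. This fails if two leaves of the same star are adjacent and each sees the other's color, since the whole bag gets one color.
\end{enumerate}
For case 3, I would try to re-route such leaves inside their factor-critical component, using the near-perfect matchings that avoid any chosen vertex. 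The aim is that every bag becomes an edge or a star whose leaves are pairwise nonadjacent, or a leaf whose adjacency is harmless. Planarity of the resulting minor, and hence the Four Color Theorem, then finishes the argument. All steps (matching, decomposition, contraction, $4$-coloring) run in polynomial time.
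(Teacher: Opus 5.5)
You handle the matched vertices correctly, and that is exactly the paper's starting point (\Cref{obs:perfect}). The proof fails at the step that carries all the difficulty: the vertices left exposed by $M$. Your proposed mechanism there does not work. If you put an exposed vertex $x$ into the bag of a neighbor $a\in A(G)$, the bag contains $a$, its partner $m(a)$ and $x$, all colored alike. The leaves then see that color once, but the center $a$ sees it on both $m(a)$ and $x$. So $a$ loses its witness, and properness of the minor gives no substitute. The path $x\,a\,y$ with $M=\{ay\}$ is already a counterexample: $D(G)=\{x,y\}$ and $A(G)=\{a\}$, so your star puts the whole graph in one bag, while $a$ needs $x$ and $y$ to get different colors. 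The failure you anticipated, adjacent leaves, never arises here, because $m(a)$ and the exposed vertices lie in distinct components of $G[D(G)]$. Moreover, $A(G)$ can be empty, e.g.\ for a factor-critical $G$ such as an odd cycle, so there may be nothing to attach $x$ to. Re-choosing near-perfect matchings inside factor-critical components only moves the exposed vertices around, since their number is fixed by \Cref{clm:GallaiEdmonds}.

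The paper instead keeps each exposed vertex $x$ as a singleton supervertex $[x]$. Its color then differs from every bag it touches, so it never spoils another vertex's witness, and the problem becomes making $x$ itself see a unique color. A proper coloring of $G/M$ does not ensure this. In $K_3$ with $M=\{uv\}$ the exposed vertex sees one color twice, and an exposed vertex of large degree may see every color at least twice. This is where the paper's main ideas enter, and none of them appear in your plan:
\begin{itemize}
\item a maximum matching in which every exposed vertex has degree at most $5$ (\Cref{thm:deg5}, \Cref{cor:lowdeg_free}), proved by counting edges between $D(G_x)$ and $A(G_x)$ using that $G[D(G_x)]$ has $|A(G_x)|+1$ components;
\item wheel augmentation at exposed vertices, so that consecutive neighbor bags get distinct colors;
\item Kempe-chain exchanges when the four neighbors of a degree-$4$ exposed vertex receive only two colors;
\item a separate treatment of configurations where neighbors of $x$ are matched to each other (Cases~2B and~4C). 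This uses switching along alternating paths or cycles inside the factor-critical component, contracting paths such as $a_1xa_4$ into one supervertex, or a planarity contradiction from \Cref{lemma_gadgets}.
\end{itemize}
Your closing step (``re-route such leaves inside their factor-critical component'') is a hope rather than an argument. As written, the proposal only covers graphs with a perfect matching.
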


Closing the gap between $4$ and $5$ for the maximum CFON chromatic number in the class of planar graphs has required the development of significant new methods and tools.
Our approach is based on a localized analysis of the Gallai--Edmonds decomposition for maximum matchings in a graph. 
The Gallai--Edmonds structure~\cite{Gallai1964,edmonds1965paths} is a classical tool in matching theory~\cite{Plummer}, and we develop a refinement that focuses on individual factor-critical components and their interaction with unmatched vertices. 
While the connection between CFON colorings and matchings in a graph (\Cref{obs:perfect}, informally saying that the matched vertices are easily CFON colorable) has been known before~\cite{Huang}, 
to the best of our knowledge, such a localized use of the Gallai--Edmonds decomposition has not been previously exploited in the context of conflict-free coloring.

The new perspective allows us to derive some structural properties of planar graphs, in particular, that there exists a maximum matching in which every unmatched vertex has degree at most five. 
This structural insight serves as the starting point for our analysis and enables a systematic treatment of local configurations (Figures~\ref{fig:matching_one}--\ref{fig:neighbors_five}), ultimately leading to a four-color bound for the CFON colorings of planar graphs.
Our arguments also use the classical Four Color Theorem and, in turn, \Cref{thm:main} implies the Four Color Theorem via the lower-bound reduction with a $1$-subdivision sketched above.

\subparagraph*{Paper organization. }
We introduce the necessary graph coloring and matching tools in \Cref{sec:prelimin}.
In \Cref{sec:switchable} we design our tools for a localized analysis of the Gallai--Edmonds decomposition.
In \Cref{sec:classification} we prove our first cornerstone result -- \Cref{thm:deg5} about degrees of unmatched vertices, and use it to classify the local cases which have to be handled in the proof of our main result.
\Cref{sec:controlled} solves the majority of the local cases with one universal argument -- \Cref{lem:controlled}, based on proper coloring of an augmented matching-contracted graph and Kempe chains.
\Cref{sec:density} brings an additional planarity-related tool -- \Cref{lemma_gadgets}, which is then used in \Cref{sec:difficult2} to finish off the remaining two difficult local cases.
\Cref{sec:completing} then summarizes the proof of the main result -- \Cref{thm:main}, and \Cref{sec:algorithm} subsequently deals with the mostly straightforward algorithmic details.

\onlyapx{%
To improve overall readability and focus on the core ideas, many lengthy technical parts of the arguments and proofs are left for the Appendix.
}

\section{Preliminaries}\label{sec:prelimin}

We work with finite and simple graphs.
For a vertex $v$ of $G$, we denote by $N_G(v)$ (shortly~$N(v)$) its open neighborhood -- the set of all vertices adjacent to $v$ (and hence excluding $v$ itself since we have simple graphs), and by $\deg_G(v)=|N_G(v)|$ its degree. 
For a set $X\subseteq V(G)$, we denote by $N_G(X):=\bigcup_{v\in X}N_G(v)$.

\subparagraph{Colorings.}
A \emph{coloring} of a graph $G$ by/with $k$ colors (a \emph{$k$-coloring}) is an arbitrary mapping $c:V(G)\to \{1,2,\dots,k\}$.
The parameter $k$ is often implicit and we may skip it.

A \emph{proper coloring} of a graph $G$ is a coloring $c$ such that for every edge $uv\in E(G)$ we have~$c(u)\not=c(v)$.
(This is usually called just a `coloring', but we want to distinguish the term from conflict-free colorings which are not required to be proper.)
One of the most fundamental results of graph theory is the Four Color Theorem:

\begin{theorem}[Appel and Haken~\cite{appel1977every1,appel1977every2}, Robertson, Sanders, Seymour and Thomas~\cite{DBLP:conf/stoc/RobertsonSST96,DBLP:journals/jct/RobertsonSST97}]
\label{thm:FCT}
Every loopless planar graph can be properly colored with $4$ colors, and this coloring can be found in quadratic time.
\end{theorem}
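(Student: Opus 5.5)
The statement is the Four Color Theorem, which the paper cites rather than proves. The plan below follows the Robertson--Sanders--Seymour--Thomas route, which is the one that also yields the quadratic-time algorithm. The overall strategy is the classical \emph{unavoidability plus reducibility} scheme applied to a minimal counterexample.

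\textbf{Reduction to triangulations.} Loops are excluded and parallel edges do not affect properness, so we may assume $G$ is simple. Adding edges only makes proper coloring harder, so we may also assume $G$ is a plane triangulation. Take a counterexample $G$ with the fewest vertices. Standard arguments then show that $G$ is \emph{internally $6$-connected}: it has no separating triangle, and every separating $4$-cycle cuts off a single vertex. Otherwise we would color the two sides of the separator by minimality and glue the colorings, permuting colors on one side; for $4$-cycles this gluing needs a Kempe-chain argument.

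\textbf{Reducibility.} Define a \emph{configuration} as a near-triangulation $K$ together with the prescribed degrees of its vertices in $G$. Call $K$ \emph{reducible} if it cannot occur in a minimal counterexample. The test is to delete $K$, possibly contract a few edges of its ring, and color the smaller graph by minimality. One then checks, by enumerating all colorings of the ring, that every ring coloring arising this way either extends into $K$ directly, or can be changed into an extendable one by Kempe-chain recolorings outside the ring (D-reducibility, or C-reducibility when contractions are used). This is done for an explicit list $\mathcal{U}$ of about $633$ configurations, each with ring size at most $14$, and the check is a finite computer verification.

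\textbf{Unavoidability via discharging.} Assign each vertex $v$ the charge $10(6-\deg(v))$. By Euler's formula the total charge is $120>0$. Then specify about $32$ discharging rules that move charge from vertices of degree $5$ to their neighbours of degree at least $7$, and more generally along short paths. The goal is to prove that if $G$ is internally $6$-connected and contains no member of $\mathcal{U}$, then every vertex ends with nonpositive charge. This contradicts the positive total, so every such $G$ contains a reducible configuration, and no minimal counterexample exists. The proof is a case analysis over the second neighbourhood of each vertex of degree at least $7$, and this case analysis is itself machine-checked.

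\textbf{Algorithm and main obstacle.} The argument gives an algorithm directly. Find a configuration of $\mathcal{U}$, which exists by unavoidability and is found by local search in linear time. Reduce and recurse. Extend the coloring using Kempe chains, each of which costs $O(n)$. This gives $O(n^2)$ total. The genuine obstacle is the simultaneous design of $\mathcal{U}$ and the discharging rules so that both the unavoidability and the reducibility checks succeed. This is not something to rederive by hand, so within the present paper we take the theorem exactly as cited.
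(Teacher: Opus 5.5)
Your proposal agrees with the paper, which gives no proof of this statement and simply cites the Four Color Theorem and the quadratic-time algorithm of Robertson, Sanders, Seymour and Thomas, exactly as you conclude one should. Two details of your sketch are loose, though neither matters since you take the theorem as cited. First, internal $6$-connectivity of a minimal counterexample means there are no separating cycles of length at most $4$ at all, and that every separating $5$-cycle cuts off a single vertex. Second, unavoidability of $\mathcal{U}$ holds only for internally $6$-connected triangulations, so each step of the algorithm must find a member of $\mathcal{U}$, a vertex of degree at most $4$, or a separating cycle of length at most $5$ with at least two vertices on each side, splitting and recursing in the last case.
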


We will also use the standard tool of Kempe chains. For two colors $i$ and $j$ in a properly colored graph~$G$, a \emph{$(i,j)$-Kempe chain} is a connected component of the subgraph induced by the vertices colored $i$ and~$j$.

In our paper, we study the following variant of colorings:
\begin{definition}[Open-neighborhood conflict-free coloring]\label{def:CFON}
A coloring $c$ is an \emph{open-neighbor\-hood conflict-free coloring} (shortly CFON coloring) of a graph $G$ if for every vertex $v\in V(G)$ with $N(v)\neq\varnothing$, there exists a color $i$ such that
$|c^{-1}(i)\cap N(v)|=1$.

Equivalently, for every vertex $v$ with nonempty neighborhood, there is a neighbor with a color that appears exactly once among the neighbors (is \emph{unique}).

The minimum number of colors in a CFON coloring of $G$ is called the \emph{conflict-free chromatic number} of $G$ and is denoted by $\chi_{\mathrm{CF}}(G)$.
\end{definition}

\subparagraph{Matching-theoretic tools.}
We use standard notions of matching theory~\cite{Plummer}. 
Given a \emph{matching} $M$ in a graph, that is a set of mutually non-adjacent edges, a vertex is \emph{matched} (with respect to $M$) if it is incident to an edge of $M$, and \emph{unmatched} otherwise.
A matching is \emph{perfect} if there is no unmatched vertex.
A vertex $v\in V(G)$ is called \emph{essential} if it is matched in every maximum matching of $G$, and \emph{inessential} otherwise (i.e., it is unmatched in at least one maximum matching).

\begin{definition}[Gallai--Edmonds decomposition~\cite{Gallai1964,edmonds1965paths}]\label{def:GallaiEdmonds}
For a graph $G$, let
\begin{align*}
D(G) &= \{v\in V(G) : v \text{ is inessential}\},\\
A(G) &= N_G(D(G)) \setminus D(G),\\
C(G) &= V(G) \setminus (A(G) \cup D(G)).
\end{align*}
So, the sets $D(G)$, $A(G)$, and $C(G)$ form a partition of $V(G)$.
\end{definition}

\begin{claim}[e.g., \cite{Plummer}]\label{clm:GallaiEdmonds}
The following properties hold for a Gallai--Edmonds decomposition.
\begin{enumerate}[a)]
    \item Each connected component $H$ of the induced subgraph $G[D(G)]$ is \emph{factor-critical} (that is, for every vertex $v \in V(H)$, the graph $H-v$ has a perfect matching).
    \item The induced subgraph $G[C(G)]$ has a perfect matching.
    \item There are no edges between $C(G)$ and $D(G)$ in~$G$.
    \item Every vertex of $A(G)$ has a neighbor in $D(G)$. Moreover, in every maximum matching $M$ of $G$, each vertex of $A(G)$ is incident to an edge of $M$ whose other endpoint lies in $D(G)$, and these endpoints lie in pairwise distinct connected components of $G[D(G)]$.
\label{it:GE-A-pairwise_disjoint}
    \item If $G[D(G)]$ has $k$ connected components, then for every maximum matching $M$ of $G$ there are $k-|A(G)|$ connected components of $G[D(G)]$ which are not adjacent by an edge of $M$ to~$A(G)$,
	and so there are exactly $k-|A(G)|$ unmatched vertices in~$M$.	
\label{it:k-A-unmatched}
    \item For every nonempty set $X \subseteq A(G)$, the set $N_G(X)\cap D(G)$ intersects at least $|X|+1$ distinct connected components of $G[D(G)]$.
\end{enumerate}
\end{claim}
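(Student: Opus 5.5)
We plan to prove the Claim in the classical way, following the proof of the Gallai--Edmonds structure theorem in~\cite{Plummer}. The main tools are the Tutte--Berge formula and two lemmas.

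\emph{Gallai's lemma:} if $H$ is connected and every vertex of $H$ is inessential in $H$, then $H$ is factor-critical.

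\emph{Stability lemma:} for every $a\in A(G)$ we have $\nu(G-a)=\nu(G)-1$, and the decomposition of $G-a$ is $D(G-a)=D(G)$, $A(G-a)=A(G)\setminus\{a\}$, $C(G-a)=C(G)$.

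For Gallai's lemma we take a maximum matching $M$. We assume that two vertices $u,v$ are left unmatched, chosen at minimum distance, and take a shortest path between them. Using that an interior vertex $w$ of this path is itself unmatched in some maximum matching $M'$, we compare $M$ and $M'$ through the symmetric difference $M\triangle M'$. This produces two unmatched vertices closer together, or an augmenting path, which is a contradiction.

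For the stability lemma we first show that $a$ is matched in every maximum matching. We then show that a vertex inessential in $G$ stays inessential in $G-a$: take a neighbour $d\in D(G)$ of $a$ and a maximum matching missing $d$, which must match $a$ elsewhere, and argue through alternating paths. Conversely, essential vertices stay essential, since otherwise we could rebuild a larger matching in $G$.

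Iterating the stability lemma over all vertices of $A(G)$ gives the graph $G':=G-A(G)$. Its $D$-part is $D(G)$ and its $A$-part is empty. Hence the components of $G'[D(G)]$ are whole components of $G'$, and every vertex in them is inessential. Gallai's lemma then gives that these components are factor-critical, which is (a). In $G'$ the vertices of $C(G)$ are essential and separated from $D(G)$, so $G'[C(G)]$ has a perfect matching, which is (b). Part (c) holds by the definition of $A(G)$, since every neighbour of $D(G)$ outside $D(G)$ lies in $A(G)$.

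For (d) and (e) we count. We have $\nu(G-A)=\nu(G)-|A|$, and $G-A$ has exactly $k$ odd components, the components of $G[D]$, each missing exactly one vertex. So the number of unmatched vertices in $G$ equals $k-|A|$, and $A$ attains equality in Tutte--Berge. In any maximum matching $M$ this tightness forces two things. Every $a\in A$ is matched into $D$, since an edge into $C$ or inside $A$ would leave too many odd components deficient. Also, no two vertices of $A$ are matched into the same $D$-component. This gives the matching-into-$D$ part of (d) and the count in (e). The first sentence of (d), that every vertex of $A$ has a neighbour in $D$, holds by definition.

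For (f) we argue by contradiction. Suppose $\varnothing\neq X\subseteq A$ and $N(X)\cap D$ meets at most $|X|$ components. Pick one such component $H$ adjacent to $X$ and a vertex $d\in V(H)$. Since $d$ is inessential, some maximum matching $M$ misses $d$. By (d), $M$ matches $X$ into $|X|$ pairwise distinct components, and all of them are adjacent to $X$. This would use all components adjacent to $X$, including $H$. Then the factor-critical component $H$ would have an $M$-edge leaving it while also containing the unmatched vertex $d$. Counting parity inside $H$ shows this is impossible, because $|V(H)|$ is odd: matching one vertex outward and leaving $d$ uncovered would leave an odd number of vertices to be perfectly matched inside $H$.

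The main obstacle I expect is the stability lemma, in particular proving $D(G-a)=D(G)$. It needs a careful alternating-path exchange, and the rest of the argument relies on it. I would first try the standard proof: take maximum matchings $M_1$ missing $x$ in $G$ and $M_2$ of $G-a$, and analyse the component of $M_1\triangle M_2$ through $a$.
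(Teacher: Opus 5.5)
The paper gives no proof of this claim and simply cites the classical Gallai--Edmonds structure theorem from~\cite{Plummer}. Your outline (Gallai's lemma, the stability lemma iterated over $A(G)$, then the Tutte--Berge tightness count for (d)--(f)) is exactly that standard proof, and it is correct. Two small recalibrations, however: in the stability lemma the inclusion $D(G)\subseteq D(G-a)$ is the immediate one (delete the matching edge at $a$ from a maximum matching of $G$ missing $x$), and it is the converse $D(G-a)\subseteq D(G)$ that needs the neighbour $d\in D(G)$ of $a$ and an analysis of $N\triangle M$ (with $N$ a maximum matching of $G-a$ missing $x$ and $M$ a maximum matching of $G$ missing $d$), with the goal of producing a maximum matching of $G$ missing $x$ rather than a larger one; and in (f) parity alone only yields a second unmatched vertex in $H$, so the actual contradiction comes from your tightness count, which shows that a $D$-component receiving an $M$-edge from $A$ contains no unmatched vertex.
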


\begin{theorem}[Edmonds' blossom algorithm \cite{edmonds1965paths}]\label{thm:blossom}
A matching of maximum size in a graph can be found in polynomial time.
\end{theorem}

\subparagraph{Perfect matching implies a conflict-free coloring via contraction.}

Let $F \subseteq E(G)$ be an arbitrary set of edges (however, we will usually have $F=M$ for a matching~$M$). We define the \emph{contracted graph} $H = H(G,F)$ of $G$ as follows.
Every connected component $C$ of the spanning subgraph $(V(G),F)$ is contracted down into one \emph{supervertex} of~$H$, which we denote by $[v_1\ldots v_m]$ where $V(C)=\{v_1,\ldots,v_m\}$,
while suppressing loops and multiple edges so that $H$ is simple.
So, two supervertices are adjacent in $H$ if and only if there exists an edge between their corresponding vertex sets in~$G$. 
In particular, a vertex $x$ not incident to any edge of $F$ is kept as its own supervertex $[x]$ in~$H$.

A coloring $c$ of $H$ \emph{lifts} to a coloring $c'$ of $G$ if every vertex $v$ of $G$ which belongs to a supervertex $w$ of $H$ receives the color $c'(v):=c(w)$.

\begin{observation}
\label{obs:perfect}
If $M$ is a perfect matching of $G$, then any proper coloring of the contracted graph $H=H(G,M)$ lifts to a CFON coloring~of~$G$.
\end{observation}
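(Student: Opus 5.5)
Fix a vertex $v$ of $G$; since $M$ is perfect, $v$ is matched, say $vw\in M$, and so $N(v)\neq\varnothing$. We will show that the color $c'(w)$ is unique in $N(v)$. Here $c$ is the given proper coloring of $H=H(G,M)$ and $c'$ is its lift to $G$. Every component of $(V(G),M)$ is a single edge, so each supervertex of $H$ is a pair $[xy]$ with $xy\in M$. In particular $v$ and $w$ lie together in the supervertex $[vw]$, and $c'(v)=c'(w)=c([vw])$.

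Now let $u\in N(v)$ with $u\neq w$, and let $[uu']$ be the supervertex containing $u$. Since $M$ is a matching and $u\neq w$, we have $u\notin\{v,w\}$, so $[uu']\neq[vw]$. The edge $vu$ of $G$ joins these two distinct supervertices, so $[uu']$ and $[vw]$ are adjacent in $H$. Because $c$ is proper, $c'(u)=c([uu'])\neq c([vw])=c'(w)$.

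Hence $w$ is the only neighbor of $v$ colored $c'(w)$, so $|c'^{-1}(c'(w))\cap N(v)|=1$. As $v$ was arbitrary, $c'$ is a CFON coloring of $G$.

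The only step needing care is that no other neighbor $u$ of $v$ falls into the same supervertex as $v$. This holds because supervertices are exactly the matching edges, and $w$ is the unique partner of $v$. Note also that $v$'s own color is irrelevant, since $v\notin N(v)$.
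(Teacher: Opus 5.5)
Your proof is correct and follows essentially the same route as the paper's. Both fix $v$ with its matching partner $w$, note that $c'(w)=c([vw])$ occurs in $N(v)$, and then use that every other neighbor $u$ lies in a different supervertex adjacent to $[vw]$ in $H$, so properness of $c$ forces $c'(u)\neq c'(w)$.
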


\begin{proof}
Let $c$ be a proper coloring of $H$ that lifts to the coloring $c'$ of~$G$. 
Pick any vertex $v \in V(G)$, and let $u$ be such that $uv\in M$ (since $M$ is a perfect matching).
Since $u \in N(v)$, the color $c'(u) = c'(v) = c([uv])$ appears at least once in $N(v)$. We claim it appears exactly once. 
Let $w \in N(v)\setminus\{u\}$ and $w'\in V(G)$ be such that~$ww'\in M$. Note that $uv\not=ww'$.
Since the vertices $v$ and $w$ are adjacent in $G$, their supervertices $[uv]$ and $[ww']$ are adjacent in $H$. 
Consequently, $c([uv])\not=c([ww'])$ in a proper coloring, and hence $c'(w)=c([ww']) \neq c'(u)$.
\end{proof}

\Cref{obs:perfect} shows that when $G$ has a perfect matching, conflict-free colorings can be obtained directly from proper colorings of the contracted graph $H$. 
Since contractions of edges preserve planarity, we can use \Cref{thm:FCT}.
Consequently, the main difficulty arises from vertices that are unmatched by a maximum matching. The remainder of the paper focuses on controlling these unmatched vertices by enforcing additional color constraints in the contracted graph~$H$ and/or by locally modifying the matching.

Similar matching-based ideas have already appeared in the study of conflict-free colorings. In particular, Huang et al.~\cite{Huang} use maximum matchings and the Gallai--Edmonds decomposition in their analysis of partial conflict-free colorings of planar graphs.
We develop this perspective further; we use a maximum matching as a global structural framework and introduce additional tools, such as alternating-path reachability and the auxiliary graphs~$G_x$ (see \Cref{sec:switchable}), which allow us to localize the analysis to individual factor-critical components.

\section{Switchable vertices and the subgraph $G_x$}\label{sec:switchable}

\begin{toappendix}
\label{apx:start}
\end{toappendix}
\onlyapx{%
In this section, we develop the tools which will be used to analyze the structure of unmatched vertices in planar graphs. 
We move most of the technical parts of the section to Appendix and mark the respective statements with~~~\apxcmark{apx:start}.
}

\begin{toappendix}
\label{apx:switchable}
In this section, we develop the tools which will be used to analyze the structure of unmatched vertices in planar graphs. 
In particular, we show that every unmatched vertex either has degree at most~5 or is connected by an even-length alternating path to a vertex of degree at most~5. 
We also introduce an auxiliary subgraph $G_x$, which will be used extensively in the sequel to capture local structural constraints around an unmatched vertex $x$.
\end{toappendix}

Let $M$ be a fixed maximum matching of $G$. An \emph{$M$-alternating path} is a path whose edges alternate between edges in $M$ and edges not in $M$. 
We are now going to define several notions which depend on the choice of a matching $M$ in our graph $G$,
but we will for simplicity skip an explicit reference to $M$ whenever the considered matching $M$ is clear from the context.

\begin{definition}[Vertex switchable from $x$; set $S(x)$]
Let $G$ be a graph, $M$ a matching and $x$ be a vertex unmatched by $M$. A vertex $v$ is \emph{switchable from $x$} if there exists an $M$-alternating path of even length starting in~$x$ with an edge not in $M$ and ending in~$v$ with an edge in~$M$.

We denote by $S(x)$ the set consisting of $x$ together with all vertices switchable from $x$.
\end{definition}

\begin{toappendix}

\begin{definition}[Switching along an alternating path]
Let $M$ be a matching in $G$ and let $P$ be an $M$-alternating path. The \emph{switching of $M$ along $P$} is the matching
\[
M \triangle E(P),
\]
obtained by taking the symmetric difference of $M$ with the edge set of $P$, that is, by removing all edges of $P$ that lie in $M$ and adding all edges of $P$ that do not lie in $M$.
\end{definition}

Switching along such a path produces another matching of the same size, since the path has even length and contains equally many edges in $M$ and outside $M$. In particular, it yields another maximum matching in which $x$ becomes matched and $v$ becomes unmatched.

\begin{observation}
\label{obs:switchable_inessential}
Let $x$ be unmatched in a maximum matching $M$. Then every vertex $v \in S(x)$ is inessential.
\end{observation}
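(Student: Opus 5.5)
The plan is to exhibit, for each $v\in S(x)$, a maximum matching of $G$ that leaves $v$ unmatched; by definition this means $v$ is inessential. There are two cases.

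\emph{Case $v=x$.} The vertex $x$ is unmatched in the maximum matching $M$ itself, so $x$ is inessential immediately.

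\emph{Case $v\neq x$.} Then $v$ is switchable from $x$, so there is an $M$-alternating path $P=x u_1 u_2\dots u_{2k}$ with $u_{2k}=v$. It has even length $2k\ge 2$, begins at $x$ with an edge not in $M$, and ends at $v$ with an edge of $M$. Set $M' := M\triangle E(P)$ and verify three facts in turn.
\begin{enumerate}[(i)]
\item \emph{$M'$ is a matching.} The internal vertices $u_1,\dots,u_{2k-1}$ are each incident to exactly one $M$-edge of $P$ and one non-$M$-edge of $P$. Since that $M$-edge is their unique $M$-edge, they have no $M$-edges outside $P$. After switching, each is covered by exactly one edge, namely its former non-$M$ path edge. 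The endpoint $x$ was unmatched and is now covered only by $xu_1$. The endpoint $v$ loses its unique $M$-edge $u_{2k-1}v$ and gains nothing. All other vertices are unaffected.
\item \emph{$|M'|=|M|$.} The path $P$ contains exactly $k$ edges of $M$ and $k$ edges outside $M$, so switching removes $k$ edges and adds $k$ edges. Hence $M'$ is again a maximum matching.
\item \emph{$v$ is unmatched in $M'$.} This follows from the endpoint analysis in (i).
\end{enumerate}
Thus $v$ is missed by the maximum matching $M'$, so $v$ is inessential.

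The only step needing care is (i). Because $P$ is a path, its vertices are distinct, and this is what ensures that $x$ gains exactly one edge and that no internal vertex ends up covered twice.
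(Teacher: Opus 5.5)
Your proof is correct and is the paper's argument: switch $M$ along the even alternating path to get a maximum matching $M\triangle E(P)$ that leaves $v$ unmatched. Your separate handling of the trivial case $v=x$, and your check that $M\triangle E(P)$ is a matching of the same size, spell out details the paper leaves implicit.
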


\begin{proof}
Let $P$ be an even $M$-alternating path from $x$ to $v$ whose last edge lies in $M$. Replacing $M$ by $M \triangle E(P)$ yields another maximum matching in which $v$ is unmatched. Hence $v$ is inessential.
\end{proof}

\begin{lemma}
\label{lem::sym_def_matchings}
Let \(G\) be a graph and let \(M\) and \(M'\) be two maximum matchings in~\(G\).  Define
$ R \;=\; M\triangle M' \;=\;(M\setminus M')\;\cup\;(M'\setminus M) $\,.
Then every connected component of the subgraph $H\subseteq G$ formed by the edges of $R$ and their end vertices is either
\begin{enumerate}
  \item an even cycle whose edges alternate between \(M\) and \(M'\), or
  \item a path whose edges alternate between \(M\) and \(M'\), and whose endpoints are unmatched in exactly one of the matchings.
\end{enumerate}
Moreover, since \(M\) and \(M'\) are both maximum, each alternating path component has one endpoint unmatched in \(M\) and the other unmatched in \(M'\).
\end{lemma}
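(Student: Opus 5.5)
We prove the structural part by bounding degrees in $H$, and then use maximality of $M$ and $M'$ for the ``moreover'' part.

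\textbf{Structure of $H$.} Every vertex of $G$ is incident to at most one edge of $M$ and at most one edge of $M'$. Hence every vertex of $H$ has degree at most $2$ in $H$, with at most one incident edge from $M\setminus M'$ and at most one from $M'\setminus M$.

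A connected graph of maximum degree at most $2$ is a path or a cycle. In either case two consecutive edges share a vertex, so they cannot both lie in $M$ or both lie in $M'$. Thus the edges alternate between $M\setminus M'$ and $M'\setminus M$. In particular, a cycle must have even length to close up alternately. Since $G$ is simple, a cycle has length at least $4$; note also that the two-edge ``cycle'' on a common edge $e\in M\cap M'$ cannot arise, because such an $e$ is not in $R$.

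For a path component, consider an endpoint $u$, with incident $R$-edge $e$, say $e\in M\setminus M'$. Then $u$ is unmatched in $M'$. Indeed, if $u$ were covered by some $f\in M'$, then $f\neq e$. If $f\in M'\setminus M$, then $f\in R$, contradicting $\deg_H(u)=1$. If $f\in M\cap M'$, then $u$ would be covered by two distinct edges $e,f$ of $M$, which is impossible. Meanwhile $u$ is matched in $M$ by $e$. So $u$ is unmatched in exactly one of the two matchings.

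\textbf{The ``moreover'' part.} Suppose a path component $P$ has both end-edges in $M\setminus M'$. Then $P$ is an $M'$-alternating path of odd length whose endpoints are unmatched in $M'$ by the previous paragraph, so $P$ is $M'$-augmenting. The set $M'\triangle E(P)$ is then a matching: vertices internal to $P$ remain covered once, both endpoints were $M'$-free, and no edge of $M'\setminus E(P)$ touches $V(P)$ except as already accounted for. This matching has size $|M'|+1$, contradicting the maximality of $M'$. The case with both end-edges in $M'\setminus M$ is symmetric and contradicts the maximality of $M$.

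Hence each path component has even length, with one end-edge from each matching. Its endpoint on the $M$-edge is unmatched in $M'$, and its endpoint on the $M'$-edge is unmatched in $M$.

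The only mildly delicate point is checking that $M'\triangle E(P)$ is a matching. Specifically, one must verify that no $M'$-edge outside $P$ is incident to an internal vertex of $P$. This holds because each internal vertex already has its unique $M'$-edge on $P$.
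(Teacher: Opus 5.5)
Your proposal is correct and follows essentially the same route as the paper: degree at most $2$ in $H$ forces alternating paths and cycles, and a path with both end-edges in the same matching would be augmenting for the other, contradicting maximality. Your explicit check that a path endpoint is free in the other matching, including ruling out a covering edge from $M\cap M'$, spells out a step the paper only asserts.
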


\begin{proof}
Every vertex \(v\in V(G)\) is incident to at most one edge of \(M\setminus M'\) and at most one of \(M'\setminus M\).  Hence \(\deg_H(v)\le2\), so each connected component of \(H\) is either a cycle or a path, and edges along any component alternate between membership in \(M\) and \(M'\).

Let \(P=(x_0x_1\cdots x_k)\) be a path component of \(H\) with edges \(e_i=\{x_{i-1},x_i\}\).  Its endpoints satisfy \(\deg_H(x_0)=\deg_H(x_k)=1\), and so
\[
\begin{cases}
e_1\in M\setminus M' &\implies x_0\text{ is matched in }M,\text{ free in }M',\\
e_1\in M'\setminus M   &\implies x_0\text{ is matched in }M',\text{ free in }M,
\end{cases}
\]
and similarly for \(x_k\) with respect to \(e_k\).  Interior vertices have degree 2 in \(H\) and are thus matched in both.

If a path \(P\) had both endpoints free in the same matching (say \(M\)), then \(e_1,e_k\in M'\), and flipping \(M\) along \(P\) (removing its \(M\)-edges and inserting its \(M'\)-edges) would produce a larger matching, contradicting maximality of \(M\).  The same argument applies swapping \(M\) and \(M'\).  Therefore every alternating path in \(R\) must join an \(M\)-free vertex to an \(M'\)-free vertex.
\end{proof}

\begin{corollary}
\label{cor::alternative_def}
Let \(G\) be a graph and \(M\) a fixed maximum matching in \(G\).  A vertex \(v\in V\) is inessential if and only if there exists an even-length \(M\)\nobreakdash-alternating path
\[
P = (w = x_0, x_1, \dots, x_{2\ell} = v)
\]
starting at some \(w\) free in \(M\) and ending at \(v\), with its last edge \(\{x_{2\ell-1},x_{2\ell}\}\in M\).
\end{corollary}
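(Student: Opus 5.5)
The two directions can be proved separately, using \Cref{obs:switchable_inessential} for the easy one and \Cref{lem::sym_def_matchings} for the other.

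\emph{The ``if'' direction.} Suppose such a path $P$ exists, starting at a vertex $w$ that is free in $M$. If $\ell=0$, then $v=w$ is unmatched in the maximum matching $M$, so $v$ is inessential by definition. If $\ell\ge1$, the path starts at $w$ with an edge not in $M$ (the first edge at a free vertex cannot be in $M$) and ends at $v$ with an edge in $M$. So $v$ is switchable from $w$, i.e., $v\in S(w)$, and \Cref{obs:switchable_inessential} gives that $v$ is inessential. Concretely, $M\triangle E(P)$ is a matching of the same size, hence maximum, and it leaves $v$ unmatched.

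\emph{The ``only if'' direction.} Let $v$ be inessential, and fix a maximum matching $M'$ in which $v$ is unmatched. If $v$ is free in $M$, take the trivial path with $\ell=0$ and $w=v$; this is the degenerate case where there is no last edge, and we read the statement as allowing it. Otherwise $v$ is matched in $M$ but not in $M'$. Then $v$ has degree exactly $1$ in $H$, the graph of $R=M\triangle M'$, so $v$ is an endpoint of a path component $Q$ of $H$. The $Q$-edge at $v$ lies in $M\setminus M'$.

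By \Cref{lem::sym_def_matchings}, the other endpoint $w$ of $Q$ is unmatched in $M$ and matched in $M'$, since one endpoint of $Q$ is free in each matching. The edges of $Q$ alternate between $M$ and $M'$, so $Q$ is $M$-alternating. Its first edge at $w$ lies in $M'\setminus M$ and its last edge at $v$ lies in $M$. Since the first and last edges belong to different matchings, $Q$ has even length. Traversed from $w$ to $v$, $Q$ is exactly the required path.

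The only delicate point is the parity and endpoint bookkeeping, and it is handled entirely by the maximality clause of \Cref{lem::sym_def_matchings}. That clause is what rules out both endpoints of $Q$ being $M$-free, which would have produced an odd, augmenting path instead. I therefore expect no real obstacle beyond stating the trivial case $\ell=0$ carefully.
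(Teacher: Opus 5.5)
Your proof is correct and follows the same route as the paper. For the ``only if'' direction you take the component of $M\triangle M'$ at $v$ and apply the endpoint clause of \Cref{lem::sym_def_matchings}, and for the ``if'' direction you flip $M$ along $P$. You also treat explicitly the case where $v$ is already free in $M$, via the trivial path with $\ell=0$; the paper's proof passes over this silently, even though $v$ then lies in no component of $M\triangle M'$.
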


\begin{proof}
(\(\Rightarrow\))  
If $v$ is unmatched in some maximum matching $M'$, then by Lemma~\ref{lem::sym_def_matchings}, the component of $M \triangle M'$ containing $v$ is an alternating path whose other endpoint $w$ is unmatched in $M$. Since $v$ is unmatched in $M'$, the edge of this path incident to $v$ must belong to $M$. Hence the path has even length and ends at $v$ with an edge of $M$.

(\(\Leftarrow\))  
Conversely, if there is an even-length $M$-alternating path $P$ from a free vertex $w$ to $v$, flipping $M$ along $P$ produces a maximum matching $M'$ that leaves $v$ unmatched.
\end{proof}

\end{toappendix}

\begin{definition}[The subgraph $G_x$]\label{def:Gx}
Let $G$ be a graph, $M$ a matching and $x$ be a vertex unmatched by $M$. Define $G_x$ to be the subgraph of $G$ induced by the vertex set
\[
V(G_x) := S(x) \cup N_G(S(x)).
\]
\end{definition}

The graph $G_x$ is the basic local object in our analysis. It contains all vertices that may become unmatched by switching from $x$, together with all their neighbors, and hence $G_x$ is in particular connected and $x$ is the only unmatched vertex in~$G_x$ if $M$ is maximum. 
In our context, $G_x$ records the part of the graph that will be relevant for our local modifications around $x$.
Our goal in the remainder of this section is to show that the Gallai--Edmonds decomposition of each such $G_x$ is particularly simple and that local changes of the matching within $G_x$ do not significantly affect the neighborhoods of other unmatched vertices.

We summarize our core findings as follows:

\begin{claim}\apxcmark{lem:mono}
\label{clm:Gx_structure}
Let $G$ be a graph, let $M$ be a maximum matching of $G$, and let $x$ be unmatched in $M$. Then the connected auxiliary graph
$ G_x = G\bigl[S(x)\cup N_G(S(x))\bigr] $ satisfies
\begin{align*}
D(G_x) &= S(x), &&A(G_x) = N_G(S(x))\setminus S(x), &&C(G_x) = \varnothing, \quad\mbox{and}\\
D(G_x) &= V(G_x)\cap D(G), &&A(G_x) \subseteq A(G). &&
\end{align*}
In particular, the Gallai--Edmonds decomposition of $G_x$ consists only of $D$- and $A$-vertices, with no $C$-part, and $x$ is the only vertex of $G_x$ unmatched by the restriction of $M$ to~$G_x$.
Consequently, if $|A(G_x)| = k$, then $G[D(G_x)]$ has $k+1$ connected components.

Moreover, every connected component of the graph $G[D(G_x)]$ is a connected component of $G[D(G)]$, too.
If $u\not=x$ is another vertex unmatched in~$M$ and $D_0$ is the connected component of $G[D(G_u)]$ containing~$u$, then $V(D_0)\cap V(G_x)=\varnothing$.
If $P$ is an even $M$-alternating path from $x$ to some $y\in S(x)$ and $M'=M\triangle E(P)$, then for every edge $f$ with at least one endpoint in~$D_0$ we have $f\in M$ $\iff$~$f\in M'$.
\end{claim}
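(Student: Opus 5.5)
The plan is to first establish the basic structure of $S(x)$ inside $G$, and then read off the Gallai--Edmonds decomposition of $G_x$ from it.

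\textbf{Structure of $S(x)$ and of its neighbours.} By \Cref{obs:switchable_inessential} we have $S(x)\subseteq D(G)$. Next, I would show that $S(x)$ is closed under $M$: if $v\in S(x)\setminus\{x\}$, its $M$-partner is the predecessor on the alternating path, and that predecessor is itself switchable or equal to $x$ (take the shortest such path). I would also show that every $w\in N_G(S(x))\setminus S(x)$ is matched, with $M$-partner in $S(x)$. Indeed, if $w$ is adjacent to $v\in S(x)$, then extending an even alternating path $x\to v$ by $vw$ and the $M$-edge at $w$ makes the partner $w'$ switchable. Here $w$ is matched, since otherwise we would get an augmenting path, contradicting the maximality of $M$. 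Walk-versus-path issues are handled in the standard blossom way, or by choosing shortest paths.

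\textbf{The matching restricted to $G_x$.} Consequently, $M$ restricted to $G_x$ matches every vertex except $x$. Every $M$-edge touching $V(G_x)$ lies inside $G_x$ and has at least one endpoint in $S(x)$. Hence $|M\cap E(G_x)|=(|V(G_x)|-1)/2$, and this is maximum in $G_x$ because a perfect matching of $G_x$ together with $M$ outside $G_x$ would beat $M$.

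\textbf{Gallai--Edmonds decomposition of $G_x$.} The even alternating paths from $x$ use only vertices of $G_x$, so $S(x)\subseteq D(G_x)$ by \Cref{cor::alternative_def}, applied in $G_x$ with $x$ as the only free vertex. Conversely, \Cref{cor::alternative_def} shows that every $v\in D(G_x)$ is reachable by such a path from $x$, so $v\in S(x)$. Hence $D(G_x)=S(x)$, and then $A(G_x)=N_G(S(x))\setminus S(x)$ and $C(G_x)=\varnothing$ by \Cref{def:GallaiEdmonds}. The component count follows from item~\ref{it:k-A-unmatched} of \Cref{clm:GallaiEdmonds}, since there is exactly one unmatched vertex.

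For $D(G_x)=V(G_x)\cap D(G)$, the inclusion $\subseteq$ is already known. For $\supseteq$, suppose $w\in A(G_x)$ were in $D(G)$. Then $w$ is unmatched in some maximum matching $M'$, and \Cref{lem::sym_def_matchings} gives an even $M$-alternating path $P$ from some free vertex $u$ to $w$ ending with an $M$-edge. If $u=x$, then $w\in S(x)$, which is a contradiction. Otherwise I would argue that $P$, entering $G_x$ via $w$'s partner $w'\in S(x)$, can be spliced with a path $x\to w'$ to produce an augmenting path $u\to x$, which contradicts maximality. \textbf{I expect this splicing to be the main obstacle}, since the paths may intersect. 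I would handle it with the standard argument: take the first vertex of $P$ (from $u$) lying in $V(G_x)$ and consider cases on whether it is in $S(x)$ or in $A(G_x)$. If it is in $S(x)$, we are entering via a non-$M$ edge, which yields an augmenting path $u\to x$. If it is in $A(G_x)$, we continue along its $M$-edge into $S(x)$. The same first-entry argument yields $A(G_x)\subseteq A(G)$: vertices of $A(G_x)$ are then not in $D(G)$ but are adjacent to $S(x)\subseteq D(G)$.

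\textbf{Remaining statements.} Components of $G[S(x)]$ are components of $G[D(G)]$, because $D(G)$-neighbours of $S(x)$ lie in $V(G_x)\cap D(G)=S(x)$. For another unmatched vertex $u$, the component $D_0\ni u$ of $G[D(G_u)]$ is a component of $G[D(G)]$. It cannot meet $S(x)$, since otherwise it would equal a component of $G[S(x)]$ and contain $u$; but $S(x)$ contains no unmatched vertex other than $x$. It also cannot meet $A(G_x)\subseteq A(G)$. Finally, every edge of $P$ lies in $G_x$ and has an endpoint in $S(x)$. An edge with an endpoint in $D_0$ therefore either avoids $V(G_x)$, or is an edge from $D_0$ into $A(G_x)$, which is not an edge of $P$ since both its endpoints would have to be in $V(G_x)$ with one in $S(x)$. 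Hence its $M$-status is unchanged by the switching.
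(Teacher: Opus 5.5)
Most of your plan is sound and follows the paper: the restriction of $M$ to $G_x$, the equality $D(G_x)=S(x)$ via \Cref{cor::alternative_def}, the identification of $A(G_x)$ and $C(G_x)$, the component count, and every deduction you draw from $V(G_x)\cap D(G)=S(x)$. Your observation that the $D(G)$-neighbours of $S(x)$ must lie in $S(x)$, so that $S(x)$ is a union of components of $G[D(G)]$, is even a shorter route to the component statement than the paper's. One aside: your claim that $S(x)$ is closed under $M$ is false. In the path $x,a,b$ with $ab\in M$ we have $b\in S(x)$ but $a\notin S(x)$. You never actually use this claim, though.

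\textbf{The gap.} There is a genuine gap exactly where you expected one: the inclusion $V(G_x)\cap D(G)\subseteq S(x)$. Your first-entry argument does not produce an augmenting path. Let $p_j$ be the first vertex of $P$ in $V(G_x)$. Since $u\neq x$ is free, we have $j\ge 1$ and $p_{j-1}\notin V(G_x)$.
\begin{itemize}
\item Your first case cannot occur at all: if $p_j\in S(x)$, then $p_{j-1}\in N_G(S(x))\subseteq V(G_x)$.
\item In the remaining case, $p_j\in A(G_x)$ is entered by a non-$M$ edge, and you move along the $M$-edge $p_jp_{j+1}$ into $S(x)$. From there no alternating continuation to $x$ can exist. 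The tail $p_j,p_{j+1},\dots,x$ of such an augmenting path, read backwards, would be an even $M$-alternating $x$--$p_j$ path ending with an $M$-edge, which would show $p_j\in S(x)$.
\end{itemize}
So you must go further along $P$, for instance to $p_{2m-1}=w'\in S(x)$, which is entered by a non-$M$ edge. But then the prefix of $P$ and the alternating path from $x$ can share vertices and edges in the pattern of an odd cycle (a blossom). Choosing shortest paths does not rule this out.

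\textbf{The missing ingredient.} What you need is the paper's monotonicity lemma, \Cref{lem:mono}: if $M'$ is obtained by switching from $x$ to some $s\in S(x)$, then $S_{M'}(s)\subseteq S_M(x)$. Its proof avoids the interleaving problem entirely. After a second switching along an $M'$-alternating $s$--$v$ path $Q$ one gets $M''$ with $M\triangle M''=E(P)\triangle E(Q)$. By \Cref{lem::sym_def_matchings}, the component of $x$ in this symmetric difference is an alternating path whose other end is $M''$-free but $M$-matched, and that vertex can only be $v$.

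With this lemma the step closes as follows:
\begin{itemize}
\item Take $s\in S(x)$ adjacent to $w$ and switch to $M'$, so that $s$ is free.
\item Since $w\in D(G)$, \Cref{cor::alternative_def} yields an even $M'$-alternating path to $w$ from some $M'$-free vertex $z$.
\item We must have $z=s$, since otherwise appending $ws$ gives an $M'$-augmenting path.
\item Hence $w\in S_{M'}(s)\subseteq S(x)$, a contradiction.
\end{itemize}
Until this step is supplied, several parts of your proof remain unproven: $A(G_x)\subseteq A(G)$, the coincidence of components with those of $G[D(G)]$, and the two disjointness statements. Each of them rests on $V(G_x)\cap D(G)=S(x)$.
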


Claim~\ref{clm:Gx_structure} is proven in several steps. A key technical ingredient is a monotonicity property of reachability under switching: if we switch from $x$ to a vertex $u\in S(x)$, then no new switchable vertices are created outside the original set $S(x)$. This will ensure that all subsequent local modifications remain inside $G_x$.

We begin with this monotonicity statement and then use it to show that $x$ is the unique vertex of $G_x$ unmatched by the restricted matching $M\cap E(G_x)$. From there we determine the Gallai--Edmonds decomposition of $G_x$ itself, compare it with the global Gallai--Edmonds decomposition of $G$ and derive the desired conclusions.
\onlyapx{See the Appendix.}

\begin{toappendix}
\onlyapx{%
We continue with detailed arguments leading to a full proof of \Cref{clm:Gx_structure}.
}

\begin{lemma}
\label{lem:mono}
Let $x$ be unmatched in $M$, and let $u\in S_M(x)$. Let $M'$ be the maximum matching obtained by switching along an even $M$-alternating path from $x$ to $u$. Then
\[
S_{M'}(u) \subseteq S_M(x).
\]
\end{lemma}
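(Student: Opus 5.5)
We argue directly with alternating paths. Write $P=(x=p_0,p_1,\dots,p_{2k}=u)$, so that $p_{2i-1}p_{2i}\in M$ and $p_{2i}p_{2i+1}\in M'$. Note that $M$ and $M'$ agree on every edge outside $E(P)$. Moreover, every vertex of $P$ is matched in $M'$ by an edge of $P$, and every vertex of $P$ other than $x$ is matched in $M$ by an edge of $P$.

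The vertices $p_{2i}$ (the \emph{even} positions) lie in $S_M(x)$, witnessed by the prefix $P[x,p_{2i}]$. Hence $u\in S_M(x)$, and it remains to handle a vertex $v\in S_{M'}(u)\setminus\{u\}$. Fix an even $M'$-alternating path $Q$ from $u$ to $v$ that starts with a non-$M'$ edge and ends with an $M'$-edge.

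Let $w$ be the vertex of $Q\cap V(P)$ that is closest to $v$ along $Q$, and let $Q'=Q[w,v]$. Then $Q'$ meets $P$ only in $w$, and all edges of $Q'$ avoid $E(P)$, so on $Q'$ membership in $M$ and in $M'$ coincide.

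\textbf{Case 1: $w=p_{2i}$ (even position), and $v\neq w$.} The first edge of $Q'$ lies outside $P$. The $M'$-edge of $w$ lies on $P$, so this first edge is not in $M'$. It is also not in $M$, since the $M$-edge of $w$ lies on $P$, or $w=x$ is $M$-unmatched. Along $Q$ the edge types alternate strictly with respect to $M'$, and hence also with respect to $M$ along $Q'$. Therefore the last edge of $Q'$, which is in $M'$, is also in $M$. The concatenation $P[x,w]\cdot Q'$ is then an even $M$-alternating path from $x$ to $v$: it starts with a non-$M$ edge (or is just $Q'$ when $w=x$) and ends with an $M$-edge. Hence $v\in S_M(x)$. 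If $v=w$, then $v$ is an even position of $P$, and we are done as above.

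\textbf{Case 2: $w=p_{2i+1}$ (odd position).} This is the main obstacle, because the naive concatenation has the wrong parity at $w$. Following the edge types along $Q$, the edge of $Q$ entering $w$ (from the $u$-side) must be the $M'$-edge $p_{2i}p_{2i+1}$ of $w$ whenever the edge leaving $w$ into $Q'$ is a non-$M'$ edge. The latter is forced, since the $M'$-edge of $w$ is on $P$. So $Q$ reaches $w$ through $p_{2i}$. In this situation I would redefine $w$ more carefully: trace $Q$ from $v$ backwards until it first hits $V(P)$, and observe that $Q$, being a path, has to pass between the two $P$-segments in a way that lets us reroute.

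Concretely, I would try to show that the portion of $Q$ just before it hits $p_{2i}$ lets us build an $M$-alternating walk $x\to\cdots\to p_{2j}\to(\text{part of }Q)\to p_{2i+1}\to Q'$. Here we enter $p_{2i+1}$ via its $M$-edge $p_{2i+1}p_{2i+2}$, traversing $P$ backwards from some even vertex $p_{2j}$ with $j>i$ that $Q$ visits. Then $p_{2i+1}$ is reached with an $M$-edge and leaves via a non-$M$ edge into $Q'$, giving the correct parity. This is the standard blossom-type rerouting.

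If a clean rerouting proves elusive, the fallback is induction on $|E(Q)|$. One chooses a counterexample $v$ with $Q$ shortest; every vertex of $Q$ at an even distance from $u$ is then in $S_{M'}(u)$, and by minimality in $S_M(x)$. The last such vertex before $v$ is either $w$ itself or yields an $M$-alternating path to $v$ by appending two edges of $Q$, after checking that these two edges avoid $P$ or are correctly typed.
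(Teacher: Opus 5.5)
\textbf{Verdict: genuine gap.} Case~1 is correct, but Case~2, the heart of the lemma, is not proved, and the fallback induction does not work as written.

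\textbf{Case~1.} If the last vertex $w$ of $Q$ on $P$ is an even vertex $p_{2i}$, the tail $Q'=Q[w,v]$ avoids $E(P)$. So its $M$- and $M'$-types agree, and $P[x,w]\cdot Q'$ is the required path. There is a small slip: $u=p_{2k}$ is $M'$-unmatched, so not every vertex of $P$ is $M'$-matched along $P$. This is harmless in Case~1.

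\textbf{Case~2.} You identify this as the main obstacle but only sketch it (``I would try to show''). It genuinely occurs: $Q$ may run from $u$ back to $x$ off $P$, use the $M'$-edge $xp_1$, and leave $P$ at $p_1$. Any concatenation with $Q'$ must then arrive at $p_{2i+1}$ through its $M$-edge $p_{2i+1}p_{2i+2}$. That means getting from $x$ to the segment $P[p_{2i+2},u]$ via pieces of $Q[u,p_{2i}]$. But this prefix of $Q$ may enter and leave both $P[x,p_{2i}]$ and $P[p_{2i+2},u]$ repeatedly, with the parity at each contact depending on the position, and none of this is carried out. The subcase $v=w=p_{2i+1}$, where $Q$ ends with $p_{2i}p_{2i+1}$, is also not treated.

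\textbf{Fallback induction.} The witness $R$ for $q_{2m-2}\in S_M(x)$ supplied by minimality is an arbitrary even $M$-alternating path. It may already traverse the $M$-edge $q_{2m}q_{2m-1}$ with $q_{2m}$ at an odd position, so that $R$ plus $q_{2m-2}q_{2m-1}$ closes a blossom. Appending the last two edges of $Q$ then gives a walk, not a path. In addition, those two edges may lie on $P$, where the $M$- and $M'$-types are swapped. This is exactly the blossom phenomenon that prevents even-alternating reachability from being propagated edge by edge.

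\textbf{Missing idea.} Avoid rerouting altogether. Assume $v\neq x$ (that case is trivial).
\begin{itemize}
\item Let $M'':=M'\triangle E(Q)$, a maximum matching in which $v$ is unmatched. Then $M\triangle M''=E(P)\triangle E(Q)$ splits into $M$/$M''$-alternating paths and cycles.
\item The vertex $x$ is $M$-unmatched but $M''$-matched, so it is an end of a path component.
\item Since both matchings are maximum, the other end is $M''$-unmatched and $M$-matched. The two switches change the matched status only of $x$, $u$ and $v$, so that end is $v$.
\item This path leaves $x$ by a non-$M$ edge and enters $v$ by an $M$-edge. It therefore witnesses $v\in S_M(x)$, regardless of how $P$ and $Q$ intersect.
\end{itemize}
This is the paper's proof.
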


\begin{proof}
Let $P$ be the even $M$-alternating path from $x$ to $u$ used to obtain $M'$, so that
\[
M' = M\triangle E(P).
\]

Let $v\in S_{M'}(u)$. If $v=u$, then $v\in S_M(x)$ by assumption, so suppose $v\neq u$. By definition, there exists an even $M'$-alternating path $Q$ from $u$ to $v$ whose last edge lies in $M'$. Switching $M'$ along $Q$ gives another maximum matching
\[
M'' := M'\triangle E(Q)
\]
in which $v$ is unmatched.

We compare the two maximum matchings $M$ and $M''$. Since
\[
M'' = M\triangle E(P)\triangle E(Q),
\]
we have
\[
M\triangle M'' = E(P)\triangle E(Q).
\]

This is now the symmetric difference of two matchings, namely $M$ and $M''$. Therefore every connected component of the graph with edge set $M\triangle M''$ is an alternating path or an alternating cycle with respect to $M$ and $M''$. Indeed, each vertex is incident to at most one edge of $M$ and at most one edge of $M''$, so two consecutive edges in such a component cannot both belong to the same matching.

The matching $M$ leaves $x$ unmatched, while $M''$ leaves $v$ unmatched. Moreover, no unmatched vertex of $M$ other than $x$ lies on $P$ or $Q$: such a vertex is also unmatched in $M'$, and it cannot occur as an internal vertex of an $M'$-alternating path. Hence, in the component of $M\triangle M''$ containing $x$, the other endpoint must be $v$.

Thus this component is an alternating path from $x$ to $v$ with respect to $M$ and $M''$. Since $x$ is unmatched in $M$, the first edge of this path is not in $M$. Since $v$ is unmatched in $M''$, the last edge of this path lies in $M$. Hence it is an even $M$-alternating path from $x$ to $v$ whose last edge lies in $M$.

Therefore $v\in S_M(x)$, and so
\[
S_{M'}(u)\subseteq S_M(x).
\]
\end{proof}

Lemma~\ref{lem:mono} shows that the set $S(x)$ is stable under switching: once a vertex becomes reachable from $x$ by an even alternating path, this reachability is preserved when switching to another vertex of $S(x)$. In particular, all subsequent switching operations starting from $x$ remain confined to the vertex set $S(x)$, and hence to the auxiliary graph $G_x$.

The following corollary makes this explicit at the level of the auxiliary graphs.

\begin{corollary}
\label{cor:Gu_in_Gx}
Let $x$ be unmatched in $M$, let $u\in S(x)$, and let $M'$ be obtained by switching along an even $M$-alternating path from $x$ to $u$. Then
\[
G_u \subseteq G_x,
\]
where $G_u$ is defined with respect to $M'$.
\end{corollary}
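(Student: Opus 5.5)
We need to show $G_u \subseteq G_x$, where $G_u = G[S_{M'}(u)\cup N_G(S_{M'}(u))]$ with respect to $M'$, and $G_x = G[S_M(x)\cup N_G(S_M(x))]$ with respect to $M$. Both are induced subgraphs of the same graph $G$. Hence it suffices to prove the vertex-set inclusion
\[
S_{M'}(u)\cup N_G\bigl(S_{M'}(u)\bigr) \subseteq S_M(x)\cup N_G\bigl(S_M(x)\bigr).
\]
An induced subgraph on a smaller vertex set is a subgraph of the induced subgraph on the larger set, so this inclusion gives the claim.

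The plan has two short steps.

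First, we invoke Lemma~\ref{lem:mono}. Note that $u$ is unmatched in $M'$, so $G_u$ is well defined. Since $u\in S_M(x)$ and $M'$ arises by switching along an even $M$-alternating path from $x$ to $u$, the lemma gives $S_{M'}(u)\subseteq S_M(x)$ directly.

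Second, we use monotonicity of the neighborhood operator: $X\subseteq Y$ implies $N_G(X)\subseteq N_G(Y)$, because $N_G(X)=\bigcup_{v\in X}N_G(v)$. Here $N_G$ is taken in the fixed graph $G$ and does not depend on the matching. Thus $N_G(S_{M'}(u))\subseteq N_G(S_M(x))$. Taking the union with the first inclusion yields the desired vertex-set inclusion, and the corollary follows.

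There is no real obstacle here, since the substance lies entirely in Lemma~\ref{lem:mono}. The only point needing care is that $G_u$ is defined relative to $M'$ and $G_x$ relative to $M$. The switchable sets depend on the matching, while the neighborhoods and the induced-subgraph operation do not, so the comparison is legitimate.
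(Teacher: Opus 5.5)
Your proof is correct and matches the paper's argument exactly. Lemma~\ref{lem:mono} gives $S_{M'}(u)\subseteq S_M(x)$, monotonicity of $N_G$ carries this to the neighborhoods, and the resulting inclusion of vertex sets yields $G_u\subseteq G_x$ as induced subgraphs of $G$ (a point you make explicit and the paper leaves implicit).
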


\begin{proof}
By Lemma~\ref{lem:mono}, we have $S_{M'}(u)\subseteq S_M(x)$. It follows that
\[
N\bigl(S_{M'}(u)\bigr)\subseteq N\bigl(S_M(x)\bigr).
\]
Therefore,
\[
V(G_u)=S_{M'}(u)\cup N\bigl(S_{M'}(u)\bigr)
\subseteq
S_M(x)\cup N\bigl(S_M(x)\bigr)
= V(G_x),
\]
and hence $G_u\subseteq G_x$.
\end{proof}

We now begin the proof of Claim~\ref{clm:Gx_structure}. The first step is to understand the matching structure inside $G_x$. We show that the restriction of $M$ to $G_x$ leaves exactly one vertex unmatched, namely $x$ itself.

\begin{lemma}
\label{lem:restriction_one_free}
Let $G$ be a graph, let $M$ be a maximum matching of $G$, and let $x$ be unmatched in $M$. Define
\[
G_x := G\bigl[S(x)\cup N(S(x))\bigr],
\]
and let
\[
M_x := M\cap E(G_x).
\]
Then every vertex of $G_x$ except $x$ is matched by $M_x$. In particular, $x$ is the unique vertex of $G_x$ unmatched by $M_x$, and $M_x$ is a maximum matching of $G_x$.
\end{lemma}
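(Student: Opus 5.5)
We have to show three things: every vertex of $G_x$ other than $x$ is matched by $M$; its $M$-partner also lies in $G_x$; and $M_x$ is a maximum matching of $G_x$. The plan is to treat the two kinds of vertices in $V(G_x)=S(x)\cup N(S(x))$ separately and then finish with a counting (Berge-type) argument for maximality.

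First, the vertices of $S(x)$. Take $v\in S(x)$ with $v\neq x$. By definition there is an even alternating path from $x$ that ends at $v$ with an $M$-edge $wv$. Hence $v$ is matched, and its partner $w$ lies on that path. The prefix of the path ending at $w$ is an odd alternating path from $x$ to $w$, so $w\in N(S(x))$. (If $w$ is the second vertex of the path, then $w\in N(x)$ directly.) So the partner of $v$ belongs to $G_x$ and $wv\in M_x$.

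Second, the vertices $u\in N(S(x))\setminus S(x)$. Choose $v\in S(x)$ with $uv\in E(G)$. We claim $u$ is matched by $M$. If it were not, then $u\neq x$, and the even alternating path $P$ from $x$ to $v$ (or the trivial path when $v=x$) extended by the non-matching edge $vu$ would be an $M$-augmenting path between the two unmatched vertices $x$ and $u$. This contradicts the maximality of $M$.

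There is one subtlety to check: $u$ must not already lie on $P$. If $u$ lay on $P$, it would be matched, since $x$ is the only unmatched vertex on $P$.

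Now let $u'$ be the partner of $u$. We want $u'\in S(x)$. Consider the walk $P$ followed by $vu$ and then $uu'$. If it is a path, it is an even alternating path ending with an $M$-edge, so $u'\in S(x)$.

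The main obstacle is the case where $u$ or $u'$ already lies on $P$. I would handle it with a blossom-style argument. Suppose $u$ lies on $P$. Then $u$ is at an even position of $P$, since otherwise the prefix of $P$ ending at $u$ would put $u\in S(x)$. In that case $u'$ is the vertex preceding $u$ on $P$, and that prefix ends at $u'$ with an $M$-edge, so $u'\in S(x)$.

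Alternatively, and more cleanly, use Corollary~\ref{cor::alternative_def}. Switching $M$ along $P$ gives a maximum matching $M'$ in which $v$ is unmatched. In $M'$, the vertex $u$ is still matched to $u'$, unless $u$ lies on $P$, which was handled above. So $v,u,u'$ is an even $M'$-alternating path. Switching along it gives a maximum matching $M''$ that leaves $u'$ unmatched.

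Then, exactly as in the proof of Lemma~\ref{lem:mono}, the component of $M\triangle M''$ containing $x$ is an alternating path from $x$ to $u'$ whose last edge lies in $M$. Hence $u'\in S(x)$. Either way $uu'\in M_x$, so every vertex of $G_x$ except $x$ is matched within $G_x$.

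Finally, maximality. $M_x$ leaves exactly one vertex of $G_x$ unmatched. If $G_x$ had a perfect matching, then $|V(G_x)|$ would be even. But $M_x$ covers all vertices except one, so $|V(G_x)|$ is odd. Therefore no perfect matching exists, and $M_x$ is maximum.
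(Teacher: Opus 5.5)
Your proof is correct and follows essentially the paper's argument. You split $V(G_x)$ into $S(x)$ and $N(S(x))\setminus S(x)$, exclude an unmatched neighbour $u$ by an augmenting path, and get $u'\in S(x)$ by extending the alternating path by $vu,uu'$, treating $u\in V(P)$ separately.

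Two small corrections. First, write $P=(x=x_0,\dots,x_{2\ell}=v)$. A vertex $u\in V(P)\setminus S(x)$ then sits at an \emph{odd} index $x_{2i-1}$, and its partner is the \emph{following} vertex $x_{2i}$, whose prefix ends with an $M$-edge; the preceding vertex is not its partner, so your ``preceding'' case claim is false as stated. Second, the detour via Lemma~\ref{lem:mono} is unnecessary. Every vertex of $P$ other than $x$ is matched along $P$, so $u\notin V(P)$ already forces $u'\notin V(P)$ and $vu\notin M$, which makes the extended walk a genuine path.
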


\begin{proof}
By definition, $x$ is unmatched by $M$, and hence also by $M_x$.

We show that every vertex $y\in V(G_x)\setminus\{x\}$ is matched by $M_x$.

First suppose that $y\in S(x)\setminus\{x\}$. By definition of $S(x)$, there exists an even $M$-alternating path
\[
P=(x=x_0,x_1,\dots,x_{2\ell}=y)
\]
from $x$ to $y$ whose last edge $x_{2\ell-1}x_{2\ell}$ lies in $M$. Thus $y$ is matched in $M$ to the vertex $x_{2\ell-1}$. Since both endpoints of this edge lie on $P$, they belong to $V(G_x)$, so this matching edge belongs to $M_x$.

Now suppose that $y\in N(S(x))\setminus S(x)$.
Choose a vertex $w\in S(x)$ adjacent to $y$. Since $w\in S(x)$, there exists an even $M$-alternating path
\[
P=(x=x_0,x_1,\dots,x_{2\ell}=w)
\]
from $x$ to $w$ whose last edge lies in $M$.

If $y\notin V(P)$ and $y$ is not matched by $M$, then appending the edge $wy$ to $P$ yields an $M$-augmenting path from $x$ to $y$ which contradicts maximality of~$M$.
So, let $z$ be the vertex matched with $y$ by $M$.
Then, appending the edges $wy$ and $yz$ to $P$ yields an even $M$-alternating path from $x$ to $z$. Hence $z\in S(x)$, and so $yz\in M_x$.

If $y\in V(P)$, then, since $P$ is an even $M$-alternating path that starts with an edge not in~$M$, $y\notin S(x)$ implies that $y$ occurs on $P$ at an odd index, say $y=x_{2i-1}$. 
Then, by definition, $z=x_{2i}\in S(x)$ and the edge $yz$ belongs to $M_x$. Therefore, $y$ is matched by $M_x$.

We have proved that $x$ is the only vertex of $G_x$ unmatched by $M_x$. Therefore $M_x$ is a maximum matching of $G_x$.
\end{proof}

\Cref{lem:restriction_one_free} establishes the first key property of $G_x$: it behaves like a graph with a single exposed vertex. This will allow us to determine its Gallai--Edmonds decomposition explicitly.

We now determine the Gallai--Edmonds decomposition of $G_x$. The next lemma identifies its $D$-, $A$-, and $C$-parts, thereby proving the first three identities in \Cref{clm:Gx_structure}.

\begin{lemma}
\label{lem:DGx_equals_Splus}
Let $G$ be a graph, let $M$ be a maximum matching of $G$, and let $x$ be unmatched in $M$. Let
\[
G_x := G\bigl[S(x)\cup N(S(x))\bigr].
\]
Then
\[
D(G_x)=S(x),
\qquad
A(G_x)=N(S(x))\setminus S(x),
\qquad
C(G_x)=\varnothing.
\]
\end{lemma}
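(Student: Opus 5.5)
We plan to compute the Gallai--Edmonds decomposition of $G_x$ directly from \Cref{lem:restriction_one_free}, which says that $M_x:=M\cap E(G_x)$ is a maximum matching of $G_x$ leaving only $x$ unmatched. Then \Cref{cor::alternative_def}, applied inside $G_x$ with the matching $M_x$, says that a vertex $v$ of $G_x$ is inessential in $G_x$ if and only if there is an even $M_x$-alternating path in $G_x$ from the unique free vertex $x$ to $v$ whose last edge is in $M_x$. So the whole task is to show that this set, call it $S_{G_x}(x)$, equals $S(x)=S_G(x)$.

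\emph{Step 1: $S(x)\subseteq D(G_x)$.} For $v\in S(x)$, take the even $M$-alternating path $P$ from $x$ to $v$. Every vertex of $P$ lies in $S(x)\cup N(S(x))$: even-indexed vertices are in $S(x)$ by definition (prefixes of $P$ are witnessing paths), and odd-indexed ones are neighbours of them. Every edge of $P$ therefore lies in the induced subgraph $G_x$, and the $M$-edges of $P$ lie in $M_x$. Thus $P$ is an even $M_x$-alternating path in $G_x$, and $v\in D(G_x)$.

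\emph{Step 2: $D(G_x)\subseteq S(x)$.} Conversely, let $Q$ be an even $M_x$-alternating path in $G_x$ from $x$ ending with an $M_x$-edge. Since $M_x\subseteq M$ and $E(G_x)\subseteq E(G)$, the path $Q$ is also $M$-alternating in $G$. Its non-matching edges are outside $M_x$. They are also outside $M$: if one were in $M$ it would lie in $M\cap E(G_x)=M_x$. So $Q$ witnesses that its endpoint lies in $S(x)$. This gives $D(G_x)=S(x)$.

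\emph{Step 3: the remaining parts.} By \Cref{def:GallaiEdmonds}, $A(G_x)=N_{G_x}(S(x))\setminus S(x)$. Since $G_x$ is induced and contains $N_G(S(x))$, we have $N_{G_x}(S(x))=N_G(S(x))$, so $A(G_x)=N_G(S(x))\setminus S(x)$. Finally, $C(G_x)=V(G_x)\setminus(A\cup D)=\varnothing$, because $V(G_x)=S(x)\cup N(S(x))$.

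The only delicate point is the direction in Step 2. One must check that the non-matching edges of an $M_x$-alternating path in $G_x$ are genuinely non-$M$ edges; inducedness handles this. One must also check that \Cref{cor::alternative_def} applies with $M_x$ being maximum in $G_x$, which \Cref{lem:restriction_one_free} guarantees. Everything else is bookkeeping.
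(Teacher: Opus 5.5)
Your proposal is correct and follows the paper's proof essentially step for step. Both proofs rest on \Cref{lem:restriction_one_free} and \Cref{cor::alternative_def} applied to $G_x$ with $M_x$. In Step~1 the paper switches explicitly along $P$ inside $G_x$ instead of citing the corollary. Your checks that every vertex of $P$ lies in $G_x$, and that non-$M_x$ edges of the induced subgraph $G_x$ are non-$M$ edges, fill in two points the paper only asserts.
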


\begin{proof}
Let
\[
M_x:=M\cap E(G_x).
\]
By Lemma~\ref{lem:restriction_one_free}, $M_x$ is a maximum matching of $G_x$, and $x$ is the unique vertex of $G_x$ unmatched by $M_x$.

We now prove the equality $D(G_x)=S(x)$.

\smallskip
\noindent
\emph{First inclusion: $S(x)\subseteq D(G_x)$.}
Since $x$ is unmatched by the maximum matching $M_x$, we have $x\in D(G_x)$.

Now let $u\in S(x)$. By definition, there exists an even $M$-alternating path from $x$ to $u$ whose last edge lies in $M$. All vertices of this path belong to $S(x)$ or $N(S(x))$, and hence the path is contained in $G_x$. Switching along this path yields a matching of $G_x$ of the same size as $M_x$, and thus a maximum matching of $G_x$, in which $u$ is unmatched. Therefore $u\in D(G_x)$.

\smallskip
\noindent
\emph{Second inclusion: $D(G_x)\subseteq S(x)$.}
Let $u\in D(G_x)$. Then there exists a maximum matching $M_x'$ of $G_x$ that leaves $u$ unmatched.

Applying Corollary~\ref{cor::alternative_def} to the graph $G_x$ and the maximum matching $M_x$, there exists an even $M_x$-alternating path from the unique $M_x$-unmatched vertex $x$ to $u$ whose last edge lies in $M_x$.

Since $M_x$ is the restriction of $M$, this is also an even $M$-alternating path in $G$ from $x$ to $u$ whose last edge lies in $M$. Therefore $u\in S(x)$.

So $D(G_x)=S(x)$.

Finally, by definition of $G_x$,
\[
V(G_x)=S(x)\cup N(S(x)).
\]
Thus every vertex in $V(G_x)\setminus S(x)$ has a neighbor in $S(x)=D(G_x)$, and hence belongs to $A(G_x)$. Therefore
\[
A(G_x)=N(S(x))\setminus S(x),
\]
and consequently $C(G_x)=\varnothing$.
\end{proof}

Thus we have shown that
\[
D(G_x)=S(x), \qquad A(G_x)=N(S(x))\setminus S(x), \qquad C(G_x)=\varnothing,
\]
establishing the first part of Claim~\ref{clm:Gx_structure}.

We next relate the local structure of $G_x$ to the global Gallai--Edmonds decomposition of $G$. The following lemma identifies precisely which vertices of $G_x$ belong to the global $D(G)$.

\begin{lemma}
\label{lem:V_Gx_cap_DG}
Let $G$ be a graph, let $M$ be a maximum matching of $G$, and let $x$ be unmatched in $M$. Then
\[
V(G_x)\cap D(G)=S(x).
\]
\end{lemma}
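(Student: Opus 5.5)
The plan is to prove the two inclusions separately, using \Cref{cor::alternative_def} to characterize $D(G)$ through the fixed maximum matching $M$.

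\emph{First inclusion, $S(x)\subseteq V(G_x)\cap D(G)$.} This is immediate. By definition $S(x)\subseteq V(G_x)$, and by \Cref{obs:switchable_inessential} every vertex of $S(x)$ is inessential in $G$, so it lies in $D(G)$.

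\emph{Second inclusion, $V(G_x)\cap D(G)\subseteq S(x)$.} Since $V(G_x)=S(x)\cup N(S(x))$, it suffices to show that no vertex $y\in N(S(x))\setminus S(x)$ is inessential in $G$. Fix such a $y$ and a neighbor $w\in S(x)$ of $y$, and suppose for contradiction that $y\in D(G)$. By \Cref{cor::alternative_def} there is an even $M$-alternating path $Q$ from some $M$-unmatched vertex $u$ to $y$ whose last edge lies in $M$. In particular $y$ is matched in $M$, say to $z$, and $z$ is the penultimate vertex of $Q$. From the proof of \Cref{lem:restriction_one_free} we already know $z\in S(x)$: the even path $P$ from $x$ to $w$, extended by $wy$ and $yz$, is even alternating, or else $y$ lies on $P$ at an odd index and is matched there to a vertex of $S(x)$.

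Now combine $P$ (from $x$ to $w$) with the edge $wy$ and with $Q$ traversed backwards from $y$. Reversing $Q$ from $y$ gives an alternating walk that starts with the $M$-edge $yz$ and ends at $u$ with a non-$M$ edge. So $x \to w \to y$ followed by reversed $Q$ is an $M$-alternating walk from the unmatched vertex $x$ to the unmatched vertex $u$, of odd length, beginning and ending with non-$M$ edges. If this walk were a path and $u\neq x$, it would be augmenting, contradicting the maximality of $M$. The main obstacle is therefore that $P$ and $Q$ may intersect, and that possibly $u=x$.

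\emph{Handling the overlap.} The standard fix is to take the first vertex $q$ along reversed $Q$ (starting from $y$) that lies on $P$, and to split into cases on the parity of the position of $q$ on $P$.
\begin{itemize}
\item If $q$ sits on $P$ at an even index, then $q\in S(x)$. Following $P$ up to $q$ and then the segment of $Q$ from $q$ back to $y$, we can arrange the parities to obtain an even alternating path from $x$ ending at $y$ with an $M$-edge, or directly an augmenting path; either way $y\in S(x)$ or a contradiction follows.
\item If $q$ sits on $P$ at an odd index, then $P[x,q]$ plus the segment of $Q$ from $q$ forward to $u$ yields an augmenting path when $u\neq x$. The case $u=x$ needs separate care.
\end{itemize}
Cleaner than all this casework is to work in $G_x$, or in the graph obtained by switching. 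Switch $M$ along $Q$ to get a maximum matching $M''$ in which $y$ is unmatched. Then apply \Cref{lem::sym_def_matchings} to $M$ and $M''$: the component of $M\triangle M''$ containing $y$ is an alternating path from $y$ to an $M$-unmatched vertex. Combined with the edge $wy$ and the path $P$ (whose edges are, in the critical case, untouched), this forces either an augmenting path or an even $M$-alternating path from $x$ to $y$, i.e.\ $y\in S(x)$. Both outcomes contradict the assumptions. I expect this path-merging step to be the only nontrivial part; I would first try the "first intersection vertex" argument, and resort to the blossom-style structure from \Cref{clm:GallaiEdmonds}(d) only if it fails. That fallback uses the fact that $y\in A(G_x)$ is matched into $S(x)$, together with the fact that $A$-vertices are never in $D$.
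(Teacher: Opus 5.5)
Your first inclusion is fine, and so is your reduction of the second inclusion to showing that no $y\in N(S(x))\setminus S(x)$ is inessential. Your even-position case is also correct. If the first vertex $q$ of $P$ met by $Q$ traversed from $y$ is $x_{2i}$, then the $M$-edge at $q$ lies on the $u$-side of $Q$. So $P[x,q]$ followed by $Q[q,y]$ is an even $M$-alternating path ending in $M$, and $y\in S(x)$. The case $u=x$ that you set aside is immediate, since then $Q$ itself witnesses $y\in S(x)$.

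The gap is the odd-position case, which is exactly where the difficulty sits. If $q=x_{2i-1}$, then $P[x,q]$ followed by $Q$ run from $q$ back to $u$ is alternating at $q$, but it is only a walk. You chose $q$ as the first hit seen from $y$, so nothing stops $Q[u,q]$ from running back through $P[x,q]$. Taking instead the first hit $r$ seen from $u$ yields an augmenting path only if $r$ has even position on $P$. If $r$ has odd position, both $P[x,r]$ and $Q[u,r]$ enter $r$ through non-matching edges. You must then follow $Q$ further along the matching edge at $r$, and $Q$ may re-enter $P$ arbitrarily often. Extracting an augmenting path from such an interleaved alternating walk between two exposed vertices needs a genuine blossom-type argument, which you do not supply.

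Your other two routes do not fill this gap. In the ``cleaner'' variant, $M\triangle(M\triangle E(Q))=E(Q)$, so \Cref{lem::sym_def_matchings} just hands back $Q$. Your claim that the edges of $P$ are untouched fails precisely when $P$ and $Q$ share edges. The fallback is circular: knowing $y\in A(G_x)$ tells you nothing about $D(G)$ without the present lemma, and the paper derives $A(G_x)\subseteq A(G)$ from it.

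The missing idea is to switch in the other order. Put $M'=M\triangle E(P)$, so that $w$ becomes $M'$-exposed. Since $y$ is inessential, \Cref{cor::alternative_def} applied to $M'$ gives an even $M'$-alternating path that starts at some $M'$-exposed vertex and ends at $y$ with an $M'$-edge. That start must be $w$. Otherwise, appending the edge $yw$ gives an $M'$-augmenting path, because $w$ is exposed and hence not on the path. So $y\in S_{M'}(w)$, and \Cref{lem:mono} gives $S_{M'}(w)\subseteq S_M(x)$, a contradiction. All overlap between the two alternating paths is absorbed in the proof of \Cref{lem:mono}. There, the symmetric difference of $M$ with the doubly switched matching is analysed, and its component containing $x$ is forced to end at the newly exposed vertex. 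No first-intersection case analysis is needed.
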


\begin{proof}
We first show that
\[
S(x)\subseteq V(G_x)\cap D(G).
\]
By definition, $S(x)\subseteq V(G_x)$. Moreover, $x$ is unmatched in the maximum matching $M$, so $x\in D(G)$, and every vertex of $S(x)$ is inessential by Observation~\ref{obs:switchable_inessential}. Hence $S(x)\subseteq D(G)$, and therefore
\[
S(x)\subseteq V(G_x)\cap D(G).
\]

For the reverse inclusion, let $a\in V(G_x)\cap D(G)$. Suppose for contradiction that $a\notin S(x)$. Since
\[
V(G_x)=S(x)\cup N(S(x)),
\]
it follows that $a\in N(S(x))\setminus S(x)$. In particular, there exists a vertex $u\in S(x)$ adjacent to~$a$.

Let $M'$ be the maximum matching in $G$ obtained by switching along an even $M$-alternating path from $x$ to $u$. Then $u$ is unmatched in $M'$.

Since $a\in D(G)$, the vertex $a$ is inessential in $G$. By Corollary~\ref{cor::alternative_def}, applied with respect to the maximum matching $M'$, there exists an even $M'$-alternating path from some vertex unmatched in $M'$ to $a$, whose last edge lies in $M'$.

We claim that this path must start at $u$. Indeed, if it started at some unmatched vertex $z\neq u$, then appending the edge $au$ would produce an $M'$-augmenting path from $z$ to $u$, contradicting the maximality of $M'$.

Thus there exists an even $M'$-alternating path from $u$ to $a$ whose last edge lies in $M'$, so $a\in S_{M'}(u)$. By Lemma~\ref{lem:mono},
\[
S_{M'}(u)\subseteq S_M(x),
\]
and hence $a\in S(x)$, a contradiction.

Therefore
\[
V(G_x)\cap D(G)\subseteq S(x).
\]
Combining the two inclusions gives
\[
V(G_x)\cap D(G)=S(x).
\qedhere \]
\end{proof}

This proves the equality
\[
V(G_x)\cap D(G)=S(x)=D(G_x),
\]
which is the fourth statement of Claim~\ref{clm:Gx_structure}.

Finally, we compare the $A$-part of $G_x$ with the global set $A(G)$.

\begin{lemma}
\label{lem:AGx_in_AG}
Let $G$ be a graph, let $M$ be a maximum matching of $G$, and let $x$ be unmatched in $M$. Then
\[
A(G_x)\subseteq A(G).
\]
\end{lemma}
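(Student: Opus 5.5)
Let $a\in A(G_x)$. By \Cref{lem:DGx_equals_Splus}, $A(G_x)=N(S(x))\setminus S(x)$, so $a\notin S(x)$ and $a$ has a neighbor $u\in S(x)$. The plan is to verify the two defining conditions of $A(G)$ from \Cref{def:GallaiEdmonds}: that $a\notin D(G)$ and that $a\in N_G(D(G))$.

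The second condition is immediate. We have $u\in S(x)\subseteq D(G)$ by Observation~\ref{obs:switchable_inessential}, and $a$ is adjacent to $u$, so $a\in N_G(D(G))$.

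For the first condition, note that $a\in V(G_x)$. If also $a\in D(G)$, then \Cref{lem:V_Gx_cap_DG}, which states $V(G_x)\cap D(G)=S(x)$, would give $a\in S(x)$, a contradiction. Hence $a\notin D(G)$, and therefore $a\in N_G(D(G))\setminus D(G)=A(G)$.

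No step here is a real obstacle. All the substantive work, namely the monotonicity argument that a $D(G)$-vertex adjacent to $S(x)$ must itself lie in $S(x)$, has already been done in \Cref{lem:V_Gx_cap_DG}. The only care needed is to quote \Cref{lem:DGx_equals_Splus} so that $a$ is known to lie outside $S(x)$ while still having a neighbor in $S(x)$.
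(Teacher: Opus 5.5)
Your proof is correct and follows the paper's argument. Both use \Cref{lem:DGx_equals_Splus} to place $a$ outside $S(x)$ but adjacent to $S(x)\subseteq D(G)$, and then \Cref{lem:V_Gx_cap_DG} to get $a\notin D(G)$; the only cosmetic difference is that you conclude directly from the definition $A(G)=N_G(D(G))\setminus D(G)$, whereas the paper first rules out $a\in C(G)$ via the absence of $C(G)$--$D(G)$ edges and then uses the partition $V(G)=A(G)\cup C(G)\cup D(G)$.
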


\begin{proof}
Let $a\in A(G_x)$. By Lemma~\ref{lem:DGx_equals_Splus},
\[
A(G_x)=N(S(x))\setminus S(x)
\quad\text{and}\quad
D(G_x)=S(x).
\]
Hence $a\notin S(x)$ and $a$ has a neighbor in $S(x)$.

By Observation~\ref{obs:switchable_inessential}, every vertex of $S(x)$ is inessential in $G$, so
\[
S(x)\subseteq D(G).
\]
Therefore $a$ has a neighbor in $D(G)$.

Since there are no edges between $C(G)$ and $D(G)$ in the Gallai--Edmonds decomposition of $G$, it follows that $a\notin C(G)$.

On the other hand, by Lemma~\ref{lem:V_Gx_cap_DG},
\[
V(G_x)\cap D(G)=S(x).
\]
Because $a\in V(G_x)\setminus S(x)$, we conclude that $a\notin D(G)$.

Thus $a$ lies in neither $C(G)$ nor $D(G)$, and therefore
\[
a\in V(G)\setminus (C(G)\cup D(G))=A(G).
\]
Hence
\[
A(G_x)\subseteq A(G).
\qedhere \]
\end{proof}

This establishes the inclusion
\[
A(G_x)\subseteq A(G),
\]
completing the first part of Claim~\ref{clm:Gx_structure}.

Observation~\ref{obs:A_size} will play a key role in our later degree argument. 
It provides a precise relation between the number of connected components of $G[D(G_x)]$ and the size of the set $A(G_x)$. In particular, it allows us to control the number of edges between $D(G_x)$ and $A(G_x)$ in terms of the number of components of $D(G_x)$, which will be crucial in deriving a contradiction in planar graphs.

\begin{observation}
\label{obs:A_size}
Let $G$ be a graph, $M$ a maximum matching of $G$, and let $x$ be unmatched in $M$. Consider the subgraph $G_x$ and its Gallai--Edmonds decomposition. 
If $|A(G_x)|=k$, then $G[D(G_x)]$ has $k+1$ connected components.
\end{observation}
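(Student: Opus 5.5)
The plan is to apply the general counting fact from \Cref{clm:GallaiEdmonds}\,\ref{it:k-A-unmatched} directly to the graph $G_x$, rather than to $G$. That fact says: if $G[D(G)]$ has $k'$ components, then every maximum matching leaves exactly $k'-|A(G)|$ vertices unmatched. So the whole statement reduces to computing the number of unmatched vertices of a maximum matching of $G_x$ and reading off the component count.

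First, by \Cref{lem:DGx_equals_Splus} the Gallai--Edmonds decomposition of $G_x$ is $D(G_x)=S(x)$, $A(G_x)=N(S(x))\setminus S(x)$ and $C(G_x)=\varnothing$. Let $k'$ be the number of connected components of $G_x[D(G_x)]$. Since $G_x$ is an induced subgraph of $G$, this graph is the same as $G[D(G_x)]$.

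Second, by \Cref{lem:restriction_one_free} the matching $M_x=M\cap E(G_x)$ is a maximum matching of $G_x$, and $x$ is its only unmatched vertex. Applying \Cref{clm:GallaiEdmonds}\,\ref{it:k-A-unmatched} to $G_x$ and $M_x$ gives $k'-|A(G_x)|=1$. Hence $k'=k+1$ when $|A(G_x)|=k$.

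There is no real obstacle here. The only point to check is that the statement of \Cref{clm:GallaiEdmonds} applies verbatim to an arbitrary graph, here $G_x$, and that the unmatched count is taken with respect to the maximum matching $M_x$ of $G_x$, not $M$ in $G$. Both are ensured by the lemmas cited above.
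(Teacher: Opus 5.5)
Your proposal is correct and follows essentially the same argument as the paper: use \Cref{lem:restriction_one_free} to get that $M_x$ is a maximum matching of $G_x$ leaving only $x$ unmatched, then apply \Cref{clm:GallaiEdmonds}\,\ref{it:k-A-unmatched}) to $G_x$ to obtain $\ell-k=1$. The extra appeal to \Cref{lem:DGx_equals_Splus} and the remark that $G_x[D(G_x)]=G[D(G_x)]$ are harmless additions.
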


\begin{proof}
By Lemma~\ref{lem:restriction_one_free}, the matching $M_x:=M\cap E(G_x)$ is a maximum matching of $G_x$ that leaves exactly one vertex unmatched.

Let $G[D(G_x)]$ have $\ell$ connected components.
By \Cref{clm:GallaiEdmonds}\,\ref{it:k-A-unmatched}),
the number of vertices left unmatched by a maximum matching equals
\[
\ell - |A(G_x)| = \ell-k .
\]
Since this number equals $1$, we obtain
\[
\ell = k + 1.
\qedhere \]
\end{proof}

To analyze the internal structure of the components of $G[D(G_x)]$, we recall a standard property of factor-critical graphs (\Cref{clm:GallaiEdmonds}). This will allow us to control alternating paths inside individual components.

\begin{lemma}
\label{lem:fc_alt_path}
Let $H$ be a factor-critical graph, let $r \in V(H)$, and let $M$ be a maximum matching of $H$ that leaves exactly $r$ unmatched. Then for every vertex $u \in V(H)$, there exists an $M$-alternating path of even length from $r$ to $u$ whose last edge lies in $M$.
\end{lemma}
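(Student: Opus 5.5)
The plan is to compare $M$ with a second maximum matching of $H$ that misses $u$, and read off the required path from their symmetric difference, exactly as in Corollary~\ref{cor::alternative_def}. If $u=r$, the trivial path of length zero works; the ``last edge'' condition is vacuous, and in any case this matches the convention $r\in S(r)$. So assume $u\neq r$.

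Since $H$ is factor-critical, $H-u$ has a perfect matching $M_u$. Then $|M_u|=(|V(H)|-1)/2=|M|$. The matching $M$ covers everything except $r$, so it is maximum and $|V(H)|$ is odd. Hence $M_u$ is also a maximum matching of $H$, and it leaves exactly $u$ unmatched.

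Now apply Lemma~\ref{lem::sym_def_matchings} to $M$ and $M_u$. Every vertex has degree at most $2$ in $M\triangle M_u$, so each component is an alternating path or an even cycle. The vertex $r$ is unmatched in $M$ but matched in $M_u$, so $r$ has degree exactly $1$ in $M\triangle M_u$. Symmetrically, $u$ is unmatched in $M_u$ but matched in $M$, so $u$ also has degree exactly $1$. Every other vertex is matched in both matchings and so has degree $0$ or $2$. Consequently the component containing $r$ is a path, and its other endpoint must be a vertex of degree $1$, which can only be $u$. Let $P$ be this path from $r$ to $u$.

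It remains to check the parity and the edge membership of $P$. Its first edge at $r$ lies in $M_u\setminus M$, since $r$ is $M$-unmatched. Its last edge at $u$ lies in $M\setminus M_u$, since $u$ is $M_u$-unmatched. The edges of $P$ alternate between $M$ and $M_u$, so $P$ is $M$-alternating. Because it starts with a non-$M$ edge and ends with an $M$-edge, it has even length. This is the required path.

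I do not expect a real obstacle here. The only point needing care is the degree count that forces the component through $r$ to end at $u$ rather than at some other vertex. That is settled by $M$ and $M_u$ each having exactly one unmatched vertex.
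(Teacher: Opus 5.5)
Your proposal is correct and follows the paper's proof: take a perfect matching $M_u$ of $H-u$ and read off the $r$--$u$ path as the component of $M\triangle M_u$ containing $r$. The paper argues the degree count and the first/last-edge membership the same way. The only addition is your explicit treatment of the trivial case $u=r$, which the paper's proof passes over.
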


\begin{proof}
Since $H$ is factor-critical, the graph $H \setminus u$ has a perfect matching $M_u$. Consider the symmetric difference
\[
M \triangle M_u.
\]
Every vertex of $V(H)\setminus\{r,u\}$ has degree $0$ or $2$ in this graph, while $r$ is unmatched in $M$ and matched in $M_u$, and $u$ is matched in $M$ and unmatched in $M_u$. Hence the component of $M \triangle M_u$ containing $r$ is an alternating path from $r$ to $u$.

Its first edge lies in $M_u \setminus M$, so it is not in $M$, and its last edge lies in $M \setminus M_u$, so it is in $M$. Therefore the path has even length and is $M$-alternating.
\end{proof}

We next recall a structural property of the Gallai--Edmonds decomposition that describes how maximum matchings interact with individual factor-critical components.

\begin{lemma}
\label{lem:GE_component_matching_structure}
Let $H$ be a graph, let $M$ be a maximum matching of $H$, and let $D$ be a connected component of $H[D(H)]$. Then exactly one of the following holds:
\begin{enumerate}
    \item no edge of $M$ joins $A(H)$ to $D$, and $M|_{D}$ leaves exactly one vertex of $D$ unmatched;
    \item exactly one edge of $M$ joins $A(H)$ to $D$, and if $v\in V(D)$ is its endpoint in $D$, then $M|_{D \setminus \{v\}}$ is a perfect matching of $D \setminus \{v\}$.
\end{enumerate}
\end{lemma}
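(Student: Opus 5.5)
The plan is a parity count inside $D$, using the facts recorded in \Cref{clm:GallaiEdmonds}. Fix a maximum matching $M$ of $H$ and a component $D$ of $H[D(H)]$. First, $|V(D)|$ is odd, since $D$ is factor-critical by \Cref{clm:GallaiEdmonds}\,a).

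Next we classify the edges of $M$ that touch $D$. Such an edge either lies inside $D$ or leaves $D$. An edge leaving $D$ cannot go to $C(H)$, by part c). It cannot go to another component of $H[D(H)]$, since different components are non-adjacent by definition. It cannot go to $V(H)\setminus V(H)$-outside-the-partition either, so its other endpoint is in $A(H)$. By part d), every $a\in A(H)$ is $M$-matched into $D(H)$, and distinct vertices of $A(H)$ are matched into pairwise distinct components. Hence at most one edge of $M$ joins $A(H)$ to $D$. So the number $t$ of $M$-edges between $A(H)$ and $D$ satisfies $t\in\{0,1\}$, and every other $M$-edge touching $D$ lies inside $D$.

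Case $t=0$. Every vertex of $D$ that is matched by $M$ is matched inside $D$. The vertices of $D$ covered by $M|_D$ therefore come in pairs, so an odd number of vertices of $D$ are left unmatched, in particular at least one. It remains to show that at most one is unmatched. Suppose instead that $u\neq w$ in $D$ are both $M$-unmatched. Apply \Cref{lem:fc_alt_path} to the factor-critical graph $D$ with root $u$. That lemma assumes a matching leaving exactly $u$ unmatched, so instead I argue directly, using \Cref{lem::sym_def_matchings} in the same way. Take a perfect matching $N$ of $D-u$. In $M|_D\triangle N$, the vertex $w$ has degree $1$: it is unmatched in $M$ and matched in $N$. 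Its component is therefore a path starting at $w$ with an $N$-edge. Because $u$ is unmatched in $N$, every vertex other than $u$ is covered by $N$. So the path can only end at a vertex that is $N$-covered but $M$-unmatched, with its last edge in $N$, or at $u$. In either case both end edges lie in $N\setminus M$, and the path is $M$-augmenting. This contradicts the maximality of $M$. So exactly one vertex of $D$ is unmatched.

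Case $t=1$. Let $v$ be the endpoint in $D$ of the edge to $A(H)$. Then $V(D)\setminus\{v\}$ has even size, and all its $M$-matched vertices are matched inside $D-v$. I will show that every vertex of $D-v$ is matched, via the same augmenting-path argument. Suppose $w\in D-v$ is $M$-unmatched, and take a perfect matching $N$ of $D-v$. The component of $M|_{D-v}\triangle N$ containing $w$ is an alternating path starting with an $N$-edge. Since $N$ covers all of $D-v$, the path ends at another $M$-unmatched vertex of $D-v$, and it is $M$-augmenting, a contradiction. Thus $M|_{D\setminus\{v\}}$ is perfect. Exactly one of the two alternatives holds because they are distinguished by $t$.

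The main obstacle is the endpoint analysis of the symmetric-difference path, that is, checking that the path cannot escape $D$. It cannot, because both matchings are restricted to $D$ (respectively to $D-v$). Its ends then have the right type by the covering properties of $N$.
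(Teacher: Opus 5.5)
Your proof is correct and has the same skeleton as the paper's: $|V(D)|$ is odd, edges leaving $D$ land in $A(H)$ by parts c) and d) of \Cref{clm:GallaiEdmonds} so $t\in\{0,1\}$, and then the two cases; the only difference is the final step, where you extract an $M$-augmenting path from $M|_D\triangle N$ while the paper just observes that $M|_D$ (resp.\ $M|_{D\setminus\{v\}}$) must be a maximum matching of $D$ (resp.\ $D\setminus\{v\}$), since otherwise swapping in a larger one would enlarge $M$, and factor-criticality then fixes its size. One harmless slip: for $t=0$ the path through $w$ can never end at $u$, because $u$ is covered by neither $M|_D$ nor $N$ and so has degree $0$ in $M|_D\triangle N$ (an end at $u$ would anyway have its last edge in $M$, not in $N$), so the only possible other end is an $M$-unmatched vertex reached by an $N$-edge, exactly as your argument needs.
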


\begin{proof}
Since $D$ is a connected component of $H[D(H)]$, it is factor-critical; in particular, $|V(D)|$ is odd.

First note that any edge with exactly one endpoint in $D$ has its other endpoint in $A(H)$. Indeed, there are no edges between $D(H)$ and $C(H)$, and there are no edges between distinct components of $H[D(H)]$.

By the Gallai--Edmonds structure theorem \Cref{it:GE-A-pairwise_disjoint}, in every maximum matching each vertex of $A(H)$ is matched to a vertex of $D(H)$, and these matched vertices lie in pairwise distinct components of $H[D(H)]$. Hence the fixed component $D$ is incident with at most one matching edge from $A(H)$.

We now distinguish the two possible cases.

First suppose that no edge of $M$ joins $A(H)$ to $D$. Then every edge of $M$ incident with a vertex of $D$ lies inside $D$. Thus $M|_D$ is a matching of $D$. Since $|V(D)|$ is odd, it leaves at least one vertex of $D$ unmatched.

We claim that $M|_D$ is a maximum matching of $D$. Otherwise, replacing $M|_D$ by a larger matching of $D$ would produce a matching of $H$ larger than $M$, contradicting the maximality of $M$. Since $D$ is factor-critical, every maximum matching of $D$ leaves exactly one vertex unmatched. Hence $M|_D$ leaves exactly one vertex of $D$ unmatched.

Now suppose that an edge of $M$ joins $A(H)$ to $D$, and let $v\in V(D)$ be its endpoint in $D$. By the preceding paragraph, this is the only matching edge joining $A(H)$ to $D$. Hence every edge of $M$ incident with a vertex of $D\setminus\{v\}$ lies inside $D\setminus\{v\}$, so $M|_{D\setminus\{v\}}$ is a matching of $D\setminus\{v\}$.

We claim that $M|_{D\setminus\{v\}}$ is a maximum matching of $D\setminus\{v\}$. Otherwise, replacing it by a larger matching of $D\setminus\{v\}$ would again produce a matching of $H$ larger than $M$. Since $D$ is factor-critical, $D\setminus\{v\}$ has a perfect matching. Therefore $M|_{D\setminus\{v\}}$ is a perfect matching of $D\setminus\{v\}$.

These two cases are mutually exclusive and exhaustive, since $D$ is incident with at most one matching edge from $A(H)$.
\end{proof}

We now relate the structure of the components of $G[D(G_x)]$ to alternating paths. 
The following lemma shows that within each such component, alternating paths can be chosen to remain entirely inside the component. In this sense, alternating reachability is local to the factor-critical component: if $u$ lies in the same component as $x$, then there exists an even $M$-alternating path from $x$ to $u$ that does not leave this component.

\begin{lemma}
\label{lem:alternating_components}
Let $G$ be a graph, let $M$ be a maximum matching of $G$, and let $x$ be unmatched in $M$. For a vertex $u\in S(x)$, the following are equivalent:
\begin{enumerate}
    \item $u$ and $x$ belong to the same connected component of $G[D(G_x)]$;
    \item there exists an even $M$-alternating path from $x$ to $u$ whose vertices all lie in $S(x)$.
\end{enumerate}
\end{lemma}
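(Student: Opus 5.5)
Let $D_x$ denote the connected component of $G[D(G_x)]=G[S(x)]$ containing $x$. We prove the two implications separately.

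\emph{(2)$\Rightarrow$(1).} Suppose there is an even $M$-alternating path $P$ from $x$ to $u$ all of whose vertices lie in $S(x)=D(G_x)$ (\Cref{lem:DGx_equals_Splus}). Then $P$ is a path in the induced subgraph $G[D(G_x)]$, so $u$ lies in $D_x$.

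\emph{(1)$\Rightarrow$(2).} This direction carries the content. We apply \Cref{lem:GE_component_matching_structure} to the graph $H=G_x$, the maximum matching $M_x=M\cap E(G_x)$ (\Cref{lem:restriction_one_free}), and the component $D_x$.
\begin{itemize}
\item Since $x\in D_x$ is unmatched by $M_x$, option~2 of that lemma would force every vertex of $D_x$ to be matched. So no $M_x$-edge joins $A(G_x)$ to $D_x$.
\item By option~1, $M_x$ restricted to $D_x$ is a matching of $D_x$ leaving exactly one vertex unmatched, namely $x$. In particular, the $M$-partner of each vertex of $D_x\setminus\{x\}$ lies in $D_x$.
\item $D_x$ is factor-critical by \Cref{clm:GallaiEdmonds}(a) applied to $G_x$. \Cref{lem:fc_alt_path}, with $r=x$ and the matching $M|_{D_x}$, gives for each $u\in V(D_x)$ an even $M|_{D_x}$-alternating path from $x$ to $u$ inside $D_x$ whose last edge lies in $M|_{D_x}$.
\end{itemize}
The edges of this path lie in $D_x$, and $M|_{D_x}$ agrees with $M$ on all edges inside $D_x$. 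Hence the path is also $M$-alternating, starts at $x$ with a non-$M$ edge, and ends at $u$ with an $M$-edge. Its vertices lie in $V(D_x)\subseteq S(x)$, which gives~(2).

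The main point needing care is the first bullet: showing that $D_x$ receives no matching edge from $A(G_x)$, so that the restriction of $M$ to $D_x$ is a near-perfect matching missing exactly $x$. Once that holds, \Cref{lem:fc_alt_path} applies directly. The rest is checking that alternation with respect to $M|_{D_x}$ and with respect to $M$ coincide on edges inside $D_x$.
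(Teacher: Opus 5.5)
Your proof is correct and follows the paper's argument essentially step for step. The direction (2)$\Rightarrow$(1) is immediate from $D(G_x)=S(x)$. For (1)$\Rightarrow$(2), you use \Cref{lem:restriction_one_free} and \Cref{lem:GE_component_matching_structure} to see that $M$ restricted to the component of $x$ is a near-perfect matching missing exactly $x$, and then \Cref{lem:fc_alt_path} gives the required even alternating path inside that component.
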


\begin{proof}
By Lemma~\ref{lem:DGx_equals_Splus}, we have $D(G_x)=S(x)$. Let $D_0$ be the connected component of $G[D(G_x)]$ containing $x$.

\smallskip
\noindent
\emph{$(2)\Rightarrow(1)$.}
If there exists such an alternating path, it is contained in $G[D(G_x)]$, so $x$ and $u$ lie in the same component.

\smallskip
\noindent
\emph{$(1)\Rightarrow(2)$.}
Assume $u\in D_0$. Then $D_0$ is factor-critical.

Let $M_x := M\cap E(G_x)$. By \Cref{lem:restriction_one_free}, $M_x$ is a maximum matching of $G_x$ leaving exactly $x$ unmatched.

Applying \Cref{lem:GE_component_matching_structure} to $G_x$ and $M_x$, the component $D_0$ is not incident with any edge of $M_x$ joining $A(G_x)$ to $D_0$, since it contains the unmatched vertex $x$. Hence $M_x|_{D_0}$ is a near-perfect matching of $D_0$ leaving exactly $x$ unmatched.

By \Cref{lem:fc_alt_path}, there exists an even $M_x$-alternating path from $x$ to $u$ whose vertices lie in $D_0$. Since $M_x$ is the restriction of $M$, this is also an even $M$-alternating path contained in $S(x)$.
\end{proof}

We now compare the local structure of $G_x$ with the global Gallai--Edmonds decomposition of $G$. The next lemma shows that the components of $G[D(G_x)]$ correspond exactly to components of $G[D(G)]$.

\begin{lemma}
\label{lem:D_components_coincide}
Let $G$ be a graph, let $M$ be a maximum matching of $G$, and let $x$ be unmatched in $M$. Let $G_x := G[S(x)\cup N(S(x))]$.

Then for every connected component $D'$ of $G[D(G_x)]$, there exists a unique connected component $D$ of $G[D(G)]$ such that
\[
V(D') = V(D).
\]
In particular, $D'$ and $D$ coincide as induced subgraphs of $G$.
\end{lemma}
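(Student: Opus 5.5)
Let $D'$ be a connected component of $G[D(G_x)]$. By Lemma~\ref{lem:V_Gx_cap_DG} we have $V(D')\subseteq S(x)=D(G_x)\subseteq D(G)$. Since $D'$ is connected, it lies inside a unique connected component $D$ of $G[D(G)]$; uniqueness holds because the components of $G[D(G)]$ are vertex-disjoint. So it remains to prove the reverse inclusion $V(D)\subseteq V(D')$. Once this is shown, both graphs are induced subgraphs of $G$ on the same vertex set, so they coincide.

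For the reverse inclusion we argue by contradiction. Suppose $V(D)\not\subseteq V(D')$. Since $D$ is connected, some edge $ab$ of $G$ has $a\in V(D')$ and $b\in V(D)\setminus V(D')$. We show that $b\in S(x)$.
\begin{itemize}
\item Because $a\in S(x)$ and $ab\in E(G)$, we get $b\in N_G(S(x))\subseteq V(G_x)$.
\item Because $b\in D(G)$, Lemma~\ref{lem:V_Gx_cap_DG} gives $b\in V(G_x)\cap D(G)=S(x)=D(G_x)$.
\end{itemize}
Then $a$ and $b$ are adjacent vertices of $D(G_x)$, so they lie in the same component of $G[D(G_x)]$, namely $D'$. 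This contradicts $b\notin V(D')$. Hence $V(D)=V(D')$.

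The main step is the use of Lemma~\ref{lem:V_Gx_cap_DG}. It ensures that $S(x)$ is closed under taking neighbours that are globally inessential, so a component of $G[D(G)]$ cannot leave $S(x)$ through an edge. Everything else is bookkeeping with connectivity. No matching arguments are needed beyond those already used in Lemma~\ref{lem:V_Gx_cap_DG}.
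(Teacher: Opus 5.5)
Your proof is correct, and it takes a shorter route than the paper's. The paper also starts from $G[D(G_x)]=G[S(x)]\subseteq G[D(G)]$. For the reverse inclusion it argues as follows:
\begin{itemize}
\item it picks $a\in V(D)\cap S(x)$ and switches $M$ along an even alternating path from $x$ to $a$, obtaining $M'$;
\item it uses \Cref{lem:GE_component_matching_structure} to see that $M'$ leaves exactly $a$ unmatched in $D$;
\item it invokes factor-criticality via \Cref{lem:fc_alt_path} to put every vertex of $D$ into $S_{M'}(a)$;
\item it applies the monotonicity \Cref{lem:mono} to conclude $V(D)\subseteq S(x)$.
\end{itemize}
You instead observe that \Cref{lem:V_Gx_cap_DG} already makes $S(x)$ closed under taking neighbours in $D(G)$, since any such neighbour lies in $N_G(S(x))\subseteq V(G_x)$. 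A component of $G[D(G)]$ meeting $S(x)$ therefore cannot leave it, by pure connectivity. \Cref{lem:V_Gx_cap_DG} was itself proved via \Cref{lem:mono}, so the paper's argument essentially redoes that switching work a second time. Yours avoids \Cref{lem:GE_component_matching_structure} and \Cref{lem:fc_alt_path} altogether. The paper's route does give extra information: all of $D$ is alternating-reachable from any vertex of $D$ one switches to, which fits the localization theme of the later lemmas. That information is not needed for this statement, and for the statement at hand your argument is the cleaner one. One small citation slip: the equality $S(x)=D(G_x)$ comes from \Cref{lem:DGx_equals_Splus}, not from \Cref{lem:V_Gx_cap_DG}.
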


\begin{proof}
By \Cref{lem:DGx_equals_Splus}, we have
\[
D(G_x)=S(x).
\]
By \Cref{lem:V_Gx_cap_DG},
\[
V(G_x)\cap D(G)=S(x).
\]
Hence
\[
G[D(G_x)] = G[S(x)],
\]
and this is an induced subgraph of $G[D(G)]$. Therefore every connected component of $G[S(x)]$ is contained in a connected component of $G[D(G)]$.

It remains to show that if a connected component $D$ of $G[D(G)]$ intersects $S(x)$, then in fact
\[
V(D)\subseteq S(x).
\]

So let $D$ be a connected component of $G[D(G)]$, and let $a\in V(D)\cap S(x)$. Since $a\in S(x)$, there exists an even $M$-alternating path from $x$ to $a$ whose last edge lies in $M$. Switch along this path, and let $M'$ be the resulting maximum matching. Then $a$ is unmatched in $M'$.

Now apply \Cref{lem:GE_component_matching_structure} to the graph $G$, the maximum matching $M'$, and the component $D$. Since $a\in D$ is unmatched in $M'$, no edge of $M'$ joins $A(G)$ to $D$, and the restriction $M'|_D$ leaves exactly $a$ unmatched.

Because $D$ is factor-critical, \Cref{lem:fc_alt_path} implies that for every vertex $u\in V(D)$, there exists an even $M'$-alternating path from $a$ to $u$ whose last edge lies in $M'$. Thus
\[
u\in S_{M'}(a).
\]
By \Cref{lem:mono},
\[
S_{M'}(a)\subseteq S_M(x)=S(x).
\]
Hence every vertex of $D$ belongs to $S(x)$, proving that
\[
V(D)\subseteq S(x).
\]

Therefore each connected component of $G[D(G)]$ that intersects $S(x)$ is entirely contained in $S(x)$, and so the connected components of $G[S(x)]$ are exactly the connected components of $G[D(G)]$ that intersect $S(x)$.
\end{proof}

Finally, we show in two lemmas that the effect of switching operations is localized. 
More precisely, modifying the matching along alternating paths inside the auxiliary graph $G_x$ does not affect the components associated with other unmatched vertices. In this sense, the switching operations used in our arguments are local and do not influence other unmatched vertices and their local situation.

\begin{lemma}
\label{lem:Dx_disjoint_Gu}
Let $G$ be a graph, let $M$ be a maximum matching of $G$, and let $x$ and $u$ be distinct vertices unmatched in $M$. Let $D_0$ be the connected component of $G[D(G)]$ containing~$u$. Then
\[
V(D_0)\cap V(G_x)=\varnothing.
\]
\end{lemma}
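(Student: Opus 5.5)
We argue by contradiction: suppose some vertex $a$ lies in $V(D_0)\cap V(G_x)$. By \Cref{lem:V_Gx_cap_DG} we have $V(G_x)\cap D(G)=S(x)$. Since $V(D_0)\subseteq D(G)$, this gives $a\in S(x)$. So the component $D_0$ of $G[D(G)]$ meets $S(x)$.

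In the proof of \Cref{lem:D_components_coincide} we showed that every component of $G[D(G)]$ meeting $S(x)$ is entirely contained in $S(x)$. Applying this to $D_0$ gives $V(D_0)\subseteq S(x)$, and in particular $u\in S(x)$.

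It remains to rule out $u\in S(x)$ for an $M$-unmatched $u\neq x$. By definition of $S(x)$ there would be an even $M$-alternating path from $x$ to $u$ whose last edge lies in $M$, so $u$ would be matched by $M$. This contradicts the assumption that $u$ is unmatched. One can also see it directly: $x$ is unmatched, so $u\neq x$ cannot be the first vertex of such a path. Hence $V(D_0)\cap V(G_x)=\varnothing$.

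The only delicate point is the middle step, which requires that $D_0$ lies entirely inside $S(x)$ rather than merely meeting it. That step rests on the monotonicity \Cref{lem:mono} combined with factor-criticality (\Cref{lem:fc_alt_path}), exactly as in \Cref{lem:D_components_coincide}. If one prefers not to reuse that proof, one can redo it: switch from $x$ to $a$, obtaining $M'$, and use \Cref{lem:GE_component_matching_structure} to see that $M'|_{D_0}$ leaves only $a$ exposed. Then \Cref{lem:fc_alt_path} gives $u\in S_{M'}(a)\subseteq S_M(x)$.

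A direct alternative also works. If $u$ is unmatched in $M'$, then $u$ and $a$ are two exposed vertices of $M'$ in the same factor-critical component $D_0$. This contradicts \Cref{lem:GE_component_matching_structure}, which says $M'|_{D_0}$ leaves exactly one vertex exposed. Note that $u$ stays unmatched in $M'$, since no unmatched vertex other than $x$ lies on an alternating path from $x$.
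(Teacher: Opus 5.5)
Your proof is correct and is essentially the paper's argument. \Cref{lem:V_Gx_cap_DG} puts a common vertex into $S(x)$, and the component-coincidence fact from \Cref{lem:D_components_coincide} then forces all of $D_0$, hence $u$, into $S(x)$, which is impossible because every vertex of $S(x)\setminus\{x\}$ is $M$-matched; your alternative ending via \Cref{lem:GE_component_matching_structure} (two distinct exposed vertices $a\neq u$ of $M'$ in the same component $D_0$) is also valid.
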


\begin{proof}
By \Cref{lem:V_Gx_cap_DG} we have
\[
V(G_x)\cap D(G)=S(x).
\]
Suppose for contradiction that
\[
V(D_0)\cap V(G_x)\neq\varnothing.
\]
Then, because $D_0\subseteq D(G)$, we obtain
\[
V(D_0)\cap S(x)\neq\varnothing.
\]
By \Cref{lem:D_components_coincide}, every connected component of $G[D(G_x)]$ is a connected component of $G[D(G)]$. Since
\[
D(G_x)=S(x),
\]
it follows that the whole component $D_0$ is contained in $S(x)$. In particular, $u\in S(x)$.

But $x\neq u$ and both $x$ and $u$ are unmatched in $M$, whereas every vertex of $S(x)\setminus\{x\}$ is matched in $M$ (being the endpoint of an even $M$-alternating path whose last edge lies in~$M$). This is impossible. Therefore
\[
V(D_0)\cap V(G_x)=\varnothing.
\qedhere \]
\end{proof}

\begin{lemma}
\label{lem:switch_preserves_other_local_configs}
Let $G$ be a planar graph, let $M$ be a maximum matching of $G$, and let $x$ and $u$ be distinct vertices unmatched in $M$. Let $P$ be an even $M$-alternating path from $x$ to some vertex of $S(x)$, and let
\[
M' := M \triangle E(P).
\]
Then the local matching configuration around $u$ is the same with respect to $M$ and $M'$.

More precisely, let $D_u$ be the connected component of $G[D(G)]$ containing $u$. Then:
\begin{enumerate}
    \item every edge of $M$ with at least one endpoint in $D_u$ also belongs to $M'$, and vice versa;
    \item if two neighbors of $u$ are matched together with respect to $M$, then they are also matched together with respect to $M'$;
    \item if a neighbor of $u$ is matched outside $N(u)$ with respect to $M$, then it is also matched outside $N(u)$ with respect to $M'$;
    \item for every neighbor $a$ of $u$, the membership of $a$ in $A(G)$ or in $D_u$ is the same with respect to $M$ and $M'$.
\end{enumerate}
In particular, switching from $x$ does not change the case of $u$.
\end{lemma}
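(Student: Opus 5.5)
The plan is to reduce everything to the statement that no edge of $P$ touches $D_u$. Once this is known, the listed properties follow by short bookkeeping.

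\emph{Step 1: $P$ avoids $D_u$.} Every vertex of $P$ lies in $V(G_x)$. The even vertices are in $S(x)$. Each odd vertex is adjacent to its even predecessor, so it lies in $N(S(x))$. By \Cref{lem:Dx_disjoint_Gu}, $V(D_u)\cap V(G_x)=\varnothing$. Hence no vertex of $D_u$ lies on $P$, and no edge of $E(P)$ has an endpoint in $D_u$. Since $M'=M\triangle E(P)$, any edge $f$ with an endpoint in $D_u$ satisfies $f\in M\iff f\in M'$. This proves item~1, and it also gives the last sentence of \Cref{clm:Gx_structure}.

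\emph{Step 2: the neighbors of $u$.} Let $a\in N(u)$. Item~4 is immediate once we note that $D(G)$, $A(G)$ and the component $D_u$ are defined from $G$ alone, not from the matching. Also, $u$ is unmatched in $M'$, because $u\notin V(P)$.

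If $a\in D_u$, the $M$-edge at $a$ has an endpoint in $D_u$, so by item~1 it is unchanged in $M'$.

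If $a\notin D_u$, then $a\in A(G)$. Indeed, $a\notin D(G)$, since distinct components of $G[D(G)]$ are not adjacent, and $a\notin C(G)$ by \Cref{clm:GallaiEdmonds}. By \Cref{clm:GallaiEdmonds}\,\ref{it:GE-A-pairwise_disjoint}), $a$ is matched into $D(G)$ in both $M$ and $M'$.

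\emph{Main obstacle.} The $M$-partner of such an $a\in A(G)$ might change if $a$ lies on $P$, that is, if $a\in N(S(x))$. I would handle this by examining what items~2 and~3 actually require.

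For item~2: two neighbors of $u$ matched together must both lie in $D_u$. Two vertices of $A(G)$ cannot be matched to each other, and a vertex of $A(G)$ matched into $D(G)$ lies in a component different from its partner's. So this case is covered by item~1.

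For item~3: if $a\in D_u$ is matched outside $N(u)$, its edge is unchanged by item~1. If $a\in A(G)$, then in both matchings $a$ is matched to some vertex of $D(G)$. If that partner were in $N(u)$, it would be a vertex of $D(G)$ adjacent to $u$, hence in $D_u$. An edge from $a$ to $D_u$ is preserved by item~1, so it would already appear in $M$. Therefore the property ``matched outside $N(u)$'' is the same under $M$ and $M'$. The partner of $a$ may change, but only within $D(G)\setminus V(D_u)$, which lies outside $N(u)$.

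Together these show that the local case of $u$ is invariant under the switch.
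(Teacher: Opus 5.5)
Your proof is correct and follows essentially the paper's route: you show $V(P)\cap V(D_u)=\varnothing$ from $V(P)\subseteq V(G_x)$, citing \Cref{lem:Dx_disjoint_Gu} directly where the paper combines \Cref{lem:V_Gx_cap_DG} with the disjointness of $D_u$ from $S(x)$, you deduce item~1, and you derive items~2--4 from the Gallai--Edmonds structure; for item~3 you use the converse direction of item~1 in place of the paper's appeal to \Cref{obs:neighbors_of_x}, which works just as well. One small point: in item~2, the clause about ``a component different from its partner's'' does not actually exclude an $A(G)$-neighbor matched to a $D_u$-neighbor, but you do not need it, because ruling out two $A(G)$-vertices already puts an endpoint of the edge in $D_u$, and if you want both endpoints there, the paths $u,a,b$ and $u,b,a$ give $a,b\in S(u)\subseteq D(G)$, as in the paper.
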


\begin{proof}
Since $P\subseteq G_x$ and
\[
V(G_x)\cap D(G)=S(x)
\]
by \Cref{lem:V_Gx_cap_DG}, every vertex of $P$ that lies in $D(G)$ belongs to $S(x)$. On the other hand, the component $D_u$ of $u$ in $G[D(G)]$ is disjoint from $S(x)$, since otherwise \(u\) would belong to \(S(x)\),
which is impossible because \(u\neq x\) is also unmatched. Hence
\[
V(P)\cap V(D_u)=\varnothing.
\]
Therefore no edge of the switching path $P$ is incident with a vertex of $D_u$, and so the matching on edges incident to $D_u$ is unchanged. This proves~(1).

Now let $a,b\in N(u)$ be matched together with respect to $M$. Then the paths
\[
u,a,b
\qquad\text{and}\qquad
u,b,a
\]
are even $M$-alternating paths ending with an edge of $M$. Hence $a,b\in S(u)\subseteq D(G)$. Since both are adjacent to $u\in D(G)$, they lie in the same connected component $D_u$ of $G[D(G)]$. By~(1), the edge $ab$ is unchanged by the switch, so $a$ and $b$ are still matched together with respect to $M'$. This proves~(2).

Next let $a\in N(u)$ be matched outside $N(u)$ with respect to $M$. If $a\in D(G)$, then since $a$ is adjacent to $u\in D(G)$, it lies in $D_u$, and by~(1) its matching edge is unchanged, so in particular it is still matched outside $N(u)$ with respect to $M'$. If instead $a\notin D(G)$, then $a$ cannot lie in $C(G)$ because it is adjacent to $u\in D(G)$ and there are no edges between $C(G)$ and $D(G)$. Thus $a\in A(G)$. The sets $A(G)$ and $D(G)$ are independent of the choice of maximum matching, so $a$ remains in $A(G)$ with respect to $M'$. In particular, $a$ cannot become matched together with another neighbor of $u$, because by \Cref{obs:neighbors_of_x} any two neighbors of an unmatched vertex that are matched together lie in $D(G)$, whereas $a\in A(G)$. Hence $a$ is still matched outside $N(u)$ with respect to $M'$. This proves~(3).

Statement~(4) follows immediately from the preceding argument: if a neighbor $a$ of $u$ lies in $D_u$, then its matching is unchanged by~(1); if $a\notin D_u$, then $a\in A(G)$, and membership in $A(G)$ is independent of the choice of maximum matching.

Thus the local matching pattern around $u$ is the same for $M$ and $M'$, and so the local configuration of $u$ does not change.
\end{proof}

This establishes the second and final part of \Cref{clm:Gx_structure}.

\end{toappendix}

\section{Degree bound and a classification of configurations}
\label{sec:classification}
\begin{toappendix}
\label{apx:classification}
\end{toappendix}

The main structural message is the following new degree bound in planar graphs: from any unmatched vertex, one can switch to a low-degree unmatched vertex.

\begin{theorem}[Low-degree switchable vertex in planar graphs]
\label{thm:deg5}
Let $G$ be a planar graph and $M$ a maximum matching of $G$. If $x$ is unmatched in $M$, then there exists a vertex $v$ switchable from $x$, $v \in S(x)$, with $\deg_G(v) \le 5$ (possibly $v=x$).
\end{theorem}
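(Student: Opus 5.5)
The plan is to argue by contradiction inside the auxiliary graph $G_x$ and finish with a pure Euler-formula count. Suppose every vertex of $S(x)$ has $\deg_G\ge 6$. By \Cref{clm:Gx_structure}, $D(G_x)=S(x)$ and $A(G_x)=N_G(S(x))\setminus S(x)$. Write $k:=|A(G_x)|$; then $G[S(x)]$ has exactly $k+1$ connected components $D_0,\dots,D_k$. Each $D_i$ is factor-critical, so $n_i:=|V(D_i)|$ is odd. Let $H$ be the subgraph of $G_x$ obtained by deleting all edges with both ends in $A(G_x)$. The key point is that every neighbor of a vertex of $S(x)$ lies in $V(G_x)=S(x)\cup A(G_x)$, and we removed no edge incident to $S(x)$. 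Hence $\deg_H(v)=\deg_G(v)\ge 6$ for all $v\in S(x)$, and $H$ is a simple planar graph.

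Next I set up the counting. Let $e_D$ be the number of edges of $H$ inside $S(x)$, and $e_{DA}$ the number of edges between $S(x)$ and $A(G_x)$. Since there are no edges between distinct components $D_i$, summing degrees over $S(x)$ gives
\[ 2e_D+e_{DA}=\sum_{v\in S(x)}\deg_H(v)\ \ge\ 6|S(x)|. \]
Euler's formula applied to $H$, which has $|S(x)|+k$ vertices, gives
\[ e_D+e_{DA}\le 3(|S(x)|+k)-6 \]
(the degenerate case of fewer than $3$ vertices cannot occur, since $x$ would then have degree below $6$). Subtracting the second inequality from the first yields the lower bound $e_D\ge 3|S(x)|-3k+6$.

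For the matching upper bound on $e_D$, I bound each component separately. If $n_i\ge 3$, planarity of $D_i$ gives at most $3n_i-6\le 3n_i-3$ edges. If $n_i=1$, there are $0=3n_i-3$ edges. Summing over the $k+1$ components gives
\[ e_D\le 3|S(x)|-3(k+1). \]
Combined with the lower bound, this gives $3|S(x)|-3k+6\le 3|S(x)|-3k-3$, which is a contradiction. Hence some $v\in S(x)$ has $\deg_G(v)\le 5$.

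I expect the main obstacle to be ensuring that the count is not lossy. The subtractive trick works only because the number of components, $k+1$, exceeds $|A(G_x)|=k$; this is exactly the Gallai--Edmonds information from \Cref{clm:Gx_structure}. It also needs the degrees of vertices of $S(x)$ to be fully visible inside $H$, which holds because $V(G_x)$ contains $N_G(S(x))$. I would double-check these two facts, including that deleting the $A$--$A$ edges loses nothing on the $S(x)$ side, before writing out the short computation.
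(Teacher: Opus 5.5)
Your proof is correct and is essentially the paper's argument: a contradiction inside $G_x$ that combines the per-component bound $e(D_i)\le 3n_i-3$, the global planarity bound for $G_x$, and the Gallai--Edmonds fact (from \Cref{clm:Gx_structure}) that there are $|A(G_x)|+1$ components. The only difference is bookkeeping. You eliminate $e_{DA}$ and compare two bounds on $e_D$, whereas the paper derives $e(D,A)\ge 6(k+1)$ component-wise and $e(D,A)<6|A|$ from the degree sum over $A$. Deleting the $A$--$A$ edges is harmless but unnecessary, since $e_D+e_{DA}\le |E(G_x)|$ already holds.
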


\begin{proof}
We focus on the planar induced subgraph $G_x = G\bigl[S(x)\cup N(S(x))\bigr]$ of~$G$ within the proof.
By \Cref{clm:Gx_structure}\onlyapx{ (Appendix: see \Cref{lem:restriction_one_free} and \Cref{lem:DGx_equals_Splus})}, the matching $M$ leaves exactly one vertex unmatched in $G_x$, namely $x$. 
In the Gallai--Edmonds decomposition of $G_x$ we have $V(G_x) = A \cup D$ where $D = D(G_x)=S(x)$ and $A = A(G_x)$. 
Let $D_1, \dots, D_k$ be the connected components of $G_x[D]$.

Suppose for a contradiction that every vertex in $D$ has degree at least $6$ in $G$. Since $G_x$ contains all neighbors of $S(x)=D$, we have $\deg_{G_x}(v)=\deg_G(v)\ge 6$ for all $v\in D$.

Pick a component $D_i$ and let $n_i = |D_i|$. Then, denoting by $e(D_i)$ the number of edges inside $D_i$ and $e(D_i,A)$ the number of edges between $D_i$ and $A$, our assumption yields
\begin{equation}\label{eq:tag1}
2e(D_i) + e(D_i,A) \;=\; \sum_{v\in D_i} \deg_{G_x}(v) \;\ge\; 6|D_i|= 6n_i.
\end{equation}
Since $D_i$ is planar, we have $e(D_i) \le 3n_i - 3$, and substituting into \eqref{eq:tag1} gives $ e(D_i,A) \;\ge\; 6n_i - 2(3n_i - 3) = 6 $.
Summing over all $k$ components of $G_x[D]$,
\begin{equation}\label{eq:tag2}
e(D,A) \;\ge\; 6k.
\end{equation}
 
Again by planarity of $G_x$, $|E(G_x)| < 3|V(G_x)|$.
Thus
\[
\sum_{v\in V(G_x)} \deg_{G_x}(v) = 2|E(G_x)| < 6|V(G_x)|.
\]
Focusing now on the degrees in $A$ and using $\deg_{G_x}(v)\ge 6$ for $v\in D$, we continue
\[
\sum_{a\in A} \deg_{G_x}(a) \;=\; \sum_{v\in V(G_x)} \deg_{G_x}(v) - \sum_{v\in D} \deg_{G_x}(v) \;<\; 6|V(G_x)| - 6|D| \;=\; 6|A|.
\]
Since every edge between $D$ and $A$ contributes to the degree of some vertex of $A$,
\begin{equation}\label{eq:tag5}
e(D,A) \;\le\; \sum_{a\in A} \deg_{G_x}(a) \;<\; 6|A|.
\end{equation}

Finally, by \Cref{clm:GallaiEdmonds}\,\ref{it:k-A-unmatched}) we have $|A|<k$ (actually, $|A|=k-1$ in this case). 
Substituting into \eqref{eq:tag5}, we get $ e(D,A)<6k$. Combining \eqref{eq:tag2} and \eqref{eq:tag5} we hence arrive at a contradiction $6k\le e(D,A)<6k$.
Therefore, some vertex in $D=S(x)$ has degree at most $5$ in $G$.
\end{proof}

The last step before our case analysis is to show that switching the unmatched vertices to low degree ones, as in \Cref{thm:deg5}, can be simultaneously done in the whole graph.
This can be derived from \Cref{thm:deg5} using also \Cref{clm:Gx_structure}\onlyapx{ (Appendix: see \Cref{lem:V_Gx_cap_DG})}, and we skip the straightforward proof in here:

\begin{corollary2rep}\apxmark{apx:classification}\label{cor:lowdeg_free}
Every planar graph $G$ admits a maximum matching $M$ such that either $M$ is perfect, or every vertex unmatched by $M$ has degree at most $5$.
\end{corollary2rep}

\begin{proof}
Let $M$ be an arbitrary maximum matching of $G$. If $M$ is perfect we are done, so assume it is not.  
Among all maximum matchings of $G$, choose one (still denoted $M$) that minimizes the quantity
\[
\Phi(M) := \sum_{x \in U(M)} \max\{0,\, \deg_G(x)-5\},
\]
where $U(M)$ denotes the set of vertices unmatched by $M$.

Suppose for contradiction that $\Phi(M)>0$. Then there exists an unmatched vertex $x\in U(M)$ with $\deg_G(x)\ge 6$. Consider the subgraph $G_x$ defined with respect to $M$. By Theorem~\ref{thm:deg5}, there exists a vertex $v\in S(x)$ with $\deg_G(v)\le 5$. Let $P$ be an even $M$-alternating path from $x$ to $v$ whose last edge lies in $M$, and let $M' := M \triangle E(P)$ be the matching obtained by switching along $P$. Then $M'$ is a maximum matching in which $v$ is unmatched and $x$ is matched.

We claim that every vertex unmatched by $M$ other than $x$ remains unmatched in $M'$. Since $P\subseteq G_x$, it suffices to show that no vertex of $U(M)\setminus\{x\}$ lies in $G_x$. But every unmatched vertex belongs to $D(G)$, and by Lemma~\ref{lem:V_Gx_cap_DG} we have
\[
V(G_x)\cap D(G)=S(x).
\]
Thus the only unmatched vertex contained in $G_x$ is $x$, and hence no other unmatched vertex lies on $P$.

Therefore, $U(M') = (U(M)\setminus\{x\}) \cup \{v\}$. Since $\deg_G(x)\ge 6$ and $\deg_G(v)\le 5$, we have
\[
\max\{0,\deg_G(v)-5\}=0
\quad\text{and}\quad
\max\{0,\deg_G(x)-5\}\ge 1,
\]
and hence $\Phi(M') < \Phi(M)$, contradicting the choice of $M$ minimizing $\Phi$. Thus $\Phi(M)=0$, i.e., every unmatched vertex has degree at most $5$.
\end{proof}

\subparagraph{The classification of configurations.}

By Observation~\ref{obs:perfect}, the only obstruction to turning a proper coloring of the contracted graph $H(G,M)$ of a matching $M$ in $G$ into a conflict-free coloring of $G$ comes from vertices that are unmatched by~$M$.
Moreover, by \Cref{cor:lowdeg_free}, we may assume that every vertex unmatched by $M$ has degree at most $5$ in~$G$.

We therefore consider a maximum matching $M$ in $G$ and an unmatched vertex $x$ of degree at most~$5$.
Since $M$ is a maximum matching, each vertex of the neighborhood $N(x)$ is matched, and so either matched to another vertex of $N(x)$, or matched to a vertex outside of $N(x)$. 
We classify all possible configurations at $x$, up to \emph{isomorphism}, with respect to \emph{the degree of $x$} and the way \emph{its neighbors are matched} (either inside $N(x)$, or outwards), in Figures~\ref{fig:matching_one}--\ref{fig:neighbors_five}.

\begin{figure}[ht]
    \centering
    \begin{subfigure}[b]{0.45\textwidth}
        \centering
        \begin{tikzpicture}[every node/.style={circle, draw, fill=black, inner sep=1pt}, node distance=1cm]
            \node (t1) at (0,0) {};
            \node (m1) at ($(t1) + (0.0,-1)$) {};
            \node (m12) at ($(m1) + (0,-1)$) {};
            \draw (t1) -- (m1);
            \draw (m12) -- (m1);
            \draw[dashed] (m12) -- ++(0, -1);

            \node[draw=blue, ellipse, fit=(m1) (m12), inner sep=4pt, fill=none, transform shape, scale=0.80] {};

            \node[draw=none, fill=none, anchor=west] at ($(t1) + (0.1, 0)$) {$x$};
            \node[draw=none, fill=none, anchor=west] at ($(m1) + (0.15, 0)$) {$a_1$};

            \node[draw=none, fill=none, shape=rectangle] at (0,-3.5) {\footnotesize Case 1A: Edges matched outwards};
        \end{tikzpicture}
    \end{subfigure}
    \caption{Degree one vertex; note that other edges could be incident to depicted vertices except~$x$}
    \label{fig:matching_one}
\end{figure}

\begin{figure}[ht]
    \centering
    \begin{subfigure}[b]{0.45\textwidth}
        \centering
        \begin{tikzpicture}[every node/.style={circle, draw, fill=black, inner sep=1pt}, node distance=1cm]
            \node (t1) at (0,0) {};
            \node (l1) at ($(t1) + (-0.7,-1)$) {};
            \node (r1) at ($(t1) + (0.7,-1)$) {};
            \node (l12) at ($(l1) + (0,-1)$) {};
            \node (r12) at ($(r1) + (0,-1)$) {};
            \draw (t1) -- (l1);
            \draw (t1) -- (r1);
            \draw (l12) -- (l1);
            \draw (r12) -- (r1);
            \draw[dashed] (l1) -- (r1);
            \draw[dashed] (l12) -- ++(0, -1);
            \draw[dashed] (r12) -- ++(0, -1);

            \node[draw=blue, ellipse, fit=(r1) (r12), inner sep=4pt, fill=none, transform shape, scale=0.80] {};
            \node[draw=blue, ellipse, fit=(l1) (l12), inner sep=4pt, fill=none, transform shape, scale=0.80] {};

            \node[draw=none, fill=none, anchor=west] at ($(t1) + (0.1, 0)$) {$x$};
            \node[draw=none, fill=none, anchor=east] at ($(l1) + (-0.1, 0)$) {$a_1$};
            \node[draw=none, fill=none, anchor=west] at ($(r1) + (0.15, 0)$) {$a_2$};
            
            \node[draw=none, fill=none, shape=rectangle] at (0,-3.5) {\footnotesize Case 2A: Edges matched outwards};
        \end{tikzpicture}
    \end{subfigure}
    \hfill
    \begin{subfigure}[b]{0.45\textwidth}
        \centering
        \begin{tikzpicture}[every node/.style={circle, draw, fill=black, inner sep=1pt}, node distance=1cm]
            \node (t2) at (0,0) {};
            \node (l2) at ($(t2) + (-0.7,-1)$) {};
            \node (r2) at ($(t2) + (0.7,-1)$) {};
            \draw (t2) -- (l2);
            \draw (t2) -- (r2);
            \draw (l2) -- (r2);
            \draw[dashed] (l2) -- ++(0, -1);
            \draw[dashed] (r2) -- ++(0, -1);

            \node[draw=blue, ellipse, fit=(l2) (r2), inner sep=4pt, fill=none, transform shape, scale=0.80] {};

            \node[draw=none, fill=none, anchor=west] at ($(t1) + (0.1, 0)$) {$x$};
            \node[draw=none, fill=none, anchor=east] at ($(l1) + (-0.25, 0)$) {$a_1$};
            \node[draw=none, fill=none, anchor=west] at ($(r1) + (0.25, 0)$) {$a_2$};
            
            \node[draw=none, fill=none, shape=rectangle] at (0,-3.5) {\footnotesize\it Case 2B: Matched neighbor pair};
        \end{tikzpicture}
    \end{subfigure}
    \caption{Degree two vertices; again, other edges could exist in the picture except those incident~to~$x$}
    \label{fig:branching-pairs}
\end{figure}

\begin{figure}[ht]
    \centering
    \begin{subfigure}[b]{0.45\textwidth}
        \centering
        \begin{tikzpicture}[every node/.style={circle, draw, fill=black, inner sep=1pt}, node distance=1cm]
            \node (t1) at (0,0) {};
            \node (l1) at ($(t1) + (-1.0,-1)$) {};
            \node (r1) at ($(t1) + (1.0,-1)$) {};
            \node (m1) at ($(t1) + (0,-1)$) {};
            \node (l12) at ($(l1) + (0,-1)$) {};
            \node (r12) at ($(r1) + (0,-1)$) {};
            \node (m12) at ($(m1) + (0,-1)$) {};
            \draw (t1) -- (l1);
            \draw (t1) -- (r1);
            \draw (t1) -- (m1);
            \draw (l12) -- (l1);
            \draw (m12) -- (m1);
            \draw (r12) -- (r1);
            \draw[dashed] (l12) -- ++(0, -1);
            \draw[dashed] (r12) -- ++(0, -1);
            \draw[dashed] (m12) -- ++(0, -1);

            \node[draw=blue, ellipse, fit=(r1) (r12), inner sep=4pt, fill=none, transform shape, scale=0.80] {};
            \node[draw=blue, ellipse, fit=(l1) (l12), inner sep=4pt, fill=none, transform shape, scale=0.80] {};
            \node[draw=blue, ellipse, fit=(m1) (m12), inner sep=4pt, fill=none, transform shape, scale=0.80] {};

            \node[draw=none, fill=none, anchor=west] at ($(t1) + (0.1, 0)$) {$x$};
            \node[draw=none, fill=none, anchor=west] at ($(l1) + (0.15, 0)$) {$a_1$};
            \node[draw=none, fill=none, anchor=west] at ($(r1) + (0.15, 0)$) {$a_3$};
            \node[draw=none, fill=none, anchor=west] at ($(m1) + (0.15, 0)$) {$a_2$};

            \node[draw=none, fill=none, shape=rectangle] at (0,-3.5) {\footnotesize Case 3A: All edges matched outwards};
        \end{tikzpicture}
    \end{subfigure}
    \hfill
    \begin{subfigure}[b]{0.45\textwidth}
        \centering
        \begin{tikzpicture}[every node/.style={circle, draw, fill=black, inner sep=1pt}, node distance=1cm]
            \node (t1) at (0,0) {};
            \node (l1) at ($(t1) + (-1.0,-1)$) {};
            \node (r1) at ($(t1) + (1.0,-1)$) {};
            \node (m1) at ($(t1) + (0,-1)$) {};
            \node (r12) at ($(r1) + (0,-1)$) {};
            \draw (t1) -- (l1);
            \draw (t1) -- (r1);
            \draw (t1) -- (m1);
            \draw (l1) -- (m1);
            \draw (r12) -- (r1);
            \draw[dashed] (l1) -- ++(0, -1);
            \draw[dashed] (m1) -- ++(0, -1);
            \draw[dashed] (r12) -- ++(0, -1);

            \node[draw=blue, ellipse, fit=(l1) (m1), inner sep=4pt, fill=none, transform shape, scale=0.80] {};
            \node[draw=blue, ellipse, fit=(r1) (r12), inner sep=4pt, fill=none, transform shape, scale=0.80] {};

            \node[draw=none, fill=none, anchor=west] at ($(t1) + (0.1, 0)$) {$x$};
            \node[draw=none, fill=none, anchor=east] at ($(l1) + (-0.2, 0)$) {$a_1$};
            \node[draw=none, fill=none, anchor=west] at ($(r1) + (0.15, 0)$) {$a_3$};
            \node[draw=none, fill=none, anchor=west] at ($(m1) + (0.25, 0)$) {$a_2$};

            \node[draw=none, fill=none, shape=rectangle] at (0,-3.5) {\footnotesize Case 3B: Matched one neighbor pair};
        \end{tikzpicture}
    \end{subfigure}
    \caption{Degree three vertices; again, other edges could exist in the picture except those incident to~$x$, in particular edges between neighbors of $x$ that are not matched together}
    \label{fig:neighbors-triple}
\end{figure}

\begin{figure}[ht]
    \centering
    \begin{subfigure}[b]{0.45\textwidth}
        \centering
        \begin{tikzpicture}[every node/.style={circle, draw, fill=black, inner sep=1pt}, node distance=1cm]
            \node (t1) at (0,0) {};
            \node (l1) at ($(t1) + (-1.5,-1)$) {};
            \node (r1) at ($(t1) + (1.5,-1)$) {};
            \node (rm1) at ($(t1) + (0.5,-1)$) {};
            \node (lm1) at ($(t1) + (-0.5,-1)$) {};
            \node (r12) at ($(r1) + (0,-1)$) {};
            \node (rm12) at ($(rm1) + (0,-1)$) {};
            \node (l12) at ($(l1) + (0,-1)$) {};
            \node (lm12) at ($(lm1) + (0,-1)$) {};
            \draw (t1) -- (l1);
            \draw (t1) -- (r1);
            \draw (t1) -- (lm1);
            \draw (t1) -- (rm1);
            \draw (l1) -- (l12);
            \draw (r12) -- (r1);
            \draw (rm12) -- (rm1);
            \draw (lm12) -- (lm1);
            \draw[dashed] (l12) -- ++(0, -1);
            \draw[dashed] (lm12) -- ++(0, -1);
            \draw[dashed] (r12) -- ++(0, -1);
            \draw[dashed] (rm12) -- ++(0, -1);

            \node[draw=blue, ellipse, fit=(l1) (l12), inner sep=4pt, fill=none, transform shape, scale=0.80] {};
            \node[draw=blue, ellipse, fit=(rm1) (rm12), inner sep=4pt, fill=none, transform shape, scale=0.80] {};
            \node[draw=blue, ellipse, fit=(lm1) (lm12), inner sep=4pt, fill=none, transform shape, scale=0.80] {};
            \node[draw=blue, ellipse, fit=(r1) (r12), inner sep=4pt, fill=none, transform shape, scale=0.80] {};

            \node[draw=none, fill=none, anchor=west] at ($(t1) + (0.1, 0)$) {$x$};
            \node[draw=none, fill=none, anchor=west] at ($(l1) + (0.15, 0)$) {$a_1$};
            \node[draw=none, fill=none, anchor=west] at ($(r1) + (0.15, 0)$) {$a_4$};
            \node[draw=none, fill=none, anchor=west] at ($(lm1) + (0.15, 0)$) {$a_2$};
            \node[draw=none, fill=none, anchor=west] at ($(rm1) + (0.15, 0)$) {$a_3$};

            \node[draw=none, fill=none, shape=rectangle] at (0,-3.5) {\footnotesize Case 4A: All edges matched outwards};
        \end{tikzpicture}
    \end{subfigure}
    \hfill
    \begin{subfigure}[b]{0.45\textwidth}
        \centering
        \begin{tikzpicture}[every node/.style={circle, draw, fill=black, inner sep=1pt}, node distance=1cm]
            \node (t1) at (0,0) {};
            \node (l1) at ($(t1) + (-1.5,-1)$) {};
            \node (r1) at ($(t1) + (1.5,-1)$) {};
            \node (rm1) at ($(t1) + (0.5,-1)$) {};
            \node (lm1) at ($(t1) + (-0.5,-1)$) {};
            \node (r12) at ($(r1) + (0,-1)$) {};
            \node (rm12) at ($(rm1) + (0,-1)$) {};
            \draw (t1) -- (l1);
            \draw (t1) -- (r1);
            \draw (t1) -- (lm1);
            \draw (t1) -- (rm1);
            \draw (l1) -- (lm1);
            \draw (r12) -- (r1);
            \draw (rm12) -- (rm1);
            \draw[dashed] (l1) -- ++(0, -1);
            \draw[dashed] (lm1) -- ++(0, -1);
            \draw[dashed] (r12) -- ++(0, -1);
            \draw[dashed] (rm12) -- ++(0, -1);

            \node[draw=blue, ellipse, fit=(l1) (lm1), inner sep=4pt, fill=none, transform shape, scale=0.80] {};
            \node[draw=blue, ellipse, fit=(rm1) (rm12), inner sep=4pt, fill=none, transform shape, scale=0.80] {};
            \node[draw=blue, ellipse, fit=(r1) (r12), inner sep=4pt, fill=none, transform shape, scale=0.80] {};

            \node[draw=none, fill=none, anchor=west] at ($(t1) + (0.1, 0)$) {$x$};
            \node[draw=none, fill=none, anchor=east] at ($(l1) + (-0.2, 0)$) {$a_1$};
            \node[draw=none, fill=none, anchor=west] at ($(r1) + (0.15, 0)$) {$a_4$};
            \node[draw=none, fill=none, anchor=west] at ($(lm1) + (0.25, 0)$) {$a_2$};
            \node[draw=none, fill=none, anchor=west] at ($(rm1) + (0.15, 0)$) {$a_3$};
            
            \node[draw=none, fill=none, shape=rectangle] at (0,-3.5) {\footnotesize Case 4B: Matched one neighbor pair};
        \end{tikzpicture}
    \end{subfigure}
    \hfill
    \begin{subfigure}[b]{0.45\textwidth}
        \centering\medskip
        \begin{tikzpicture}[every node/.style={circle, draw, fill=black, inner sep=1pt}, node distance=1cm]
            \node (t1) at (0,0) {};
            \node (l1) at ($(t1) + (-2.2,-1)$) {};
            \node (r1) at ($(t1) + (2.2,-1)$) {};
            \node (rm1) at ($(t1) + (1.0,-1)$) {};
            \node (lm1) at ($(t1) + (-1.0,-1)$) {};
            \draw (t1) -- (l1);
            \draw (t1) -- (r1);
            \draw (t1) -- (lm1);
            \draw (t1) -- (rm1);
            \draw (l1) -- (lm1);
            \draw (rm1) -- (r1);
            \draw (rm1) -- (rm1);
            \draw[dashed] (l1) -- ++(0, -1);
            \draw[dashed] (lm1) -- ++(0, -1);
            \draw[dashed] (r1) -- ++(0, -1);
            \draw[dashed] (rm1) -- ++(0, -1);

            \node[draw=blue, ellipse, fit=(l1) (lm1), inner sep=4pt, fill=none, transform shape, scale=0.80] {};
            \node[draw=blue, ellipse, fit=(r1) (rm1), inner sep=4pt, fill=none, transform shape, scale=0.80] {};

            \node[draw=none, fill=none, anchor=west] at ($(t1) + (0.15, 0)$) {$x$};
            \node[draw=none, fill=none, anchor=east] at ($(l1) + (-0.25, 0)$) {$a_1$};
            \node[draw=none, fill=none, anchor=west] at ($(r1) + (0.25, 0)$) {$a_4$};
            \node[draw=none, fill=none, anchor=west] at ($(lm1) + (0.25, 0)$) {$a_2$};
            \node[draw=none, fill=none, anchor=east] at ($(rm1) + (-0.25, 0)$) {$a_3$};
            
            \node[draw=none, fill=none, shape=rectangle] at (0,-3) {\footnotesize\it Case 4C: Matched two neighbor pairs};
        \end{tikzpicture}
    \end{subfigure}
    \caption{Degree four vertices; other edges could exist in the picture except those incident to~$x$, and the planar cyclic ordering of the edges at $x$ is not necessarily the same as depicted here}
    \label{fig:neighbors_four}
\end{figure}

\begin{figure}[ht]
    \centering
    \begin{subfigure}[b]{0.45\textwidth}
        \centering
        \begin{tikzpicture}[every node/.style={circle, draw, fill=black, inner sep=1pt}, node distance=1cm]
            \node (t1) at (0,0) {};
            \node (l1) at ($(t1) + (-2.0,-1)$) {};
            \node (r1) at ($(t1) + (2.0,-1)$) {};
            \node (rm1) at ($(t1) + (1.,-1)$) {};
            \node (m1) at ($(t1) + (0.0,-1)$) {};
            \node (lm1) at ($(t1) + (-1.0,-1)$) {};
            \node (r12) at ($(r1) + (0,-1)$) {};
            \node (rm12) at ($(rm1) + (0,-1)$) {};
            \node (l12) at ($(l1) + (0,-1)$) {};
            \node (lm12) at ($(lm1) + (0,-1)$) {};
            \node (m12) at ($(m1) + (0,-1)$) {};
            \draw (t1) -- (l1);
            \draw (t1) -- (r1);
            \draw (t1) -- (lm1);
            \draw (t1) -- (rm1);
            \draw (t1) -- (m1);
            \draw (l1) -- (l12);
            \draw (r12) -- (r1);
            \draw (rm12) -- (rm1);
            \draw (m12) -- (m1);
            \draw (lm12) -- (lm1);
            \draw[dashed] (l12) -- ++(0, -1);
            \draw[dashed] (lm12) -- ++(0, -1);
            \draw[dashed] (r12) -- ++(0, -1);
            \draw[dashed] (rm12) -- ++(0, -1);
            \draw[dashed] (m12) -- ++(0, -1);

            \node[draw=blue, ellipse, fit=(l1) (l12), inner sep=4pt, fill=none, transform shape, scale=0.80] {};
            \node[draw=blue, ellipse, fit=(rm1) (rm12), inner sep=4pt, fill=none, transform shape, scale=0.80] {};
            \node[draw=blue, ellipse, fit=(r1) (r12), inner sep=4pt, fill=none, transform shape, scale=0.80] {};
            \node[draw=blue, ellipse, fit=(m1) (m12), inner sep=4pt, fill=none, transform shape, scale=0.80] {};
            \node[draw=blue, ellipse, fit=(lm1) (lm12), inner sep=4pt, fill=none, transform shape, scale=0.80] {};

            \node[draw=none, fill=none, anchor=west] at ($(t1) + (0.15, 0)$) {$x$};
            \node[draw=none, fill=none, anchor=west] at ($(l1) + (0.15, 0)$) {$a_1$};
            \node[draw=none, fill=none, anchor=west] at ($(r1) + (0.15, 0)$) {$a_5$};
            \node[draw=none, fill=none, anchor=west] at ($(lm1) + (0.15, 0)$) {$a_2$};
            \node[draw=none, fill=none, anchor=west] at ($(rm1) + (0.15, 0)$) {$a_4$};
            \node[draw=none, fill=none, anchor=west] at ($(m1) + (0.15, 0)$) {$a_3$};
            
            \node[draw=none, fill=none, shape=rectangle] at (0,-3.5) {\footnotesize Case 5A: All edges matched outwards};
        \end{tikzpicture}
    \end{subfigure}
    \hfill
    \begin{subfigure}[b]{0.45\textwidth}
        \centering
        \begin{tikzpicture}[every node/.style={circle, draw, fill=black, inner sep=1pt}, node distance=1cm]
            \node (t1) at (0,0) {};
            \node (l1) at ($(t1) + (-2.0,-1)$) {};
            \node (r1) at ($(t1) + (2.0,-1)$) {};
            \node (rm1) at ($(t1) + (1.0,-1)$) {};
            \node (m1) at ($(t1) + (0.0,-1)$) {};
            \node (lm1) at ($(t1) + (-1.0,-1)$) {};
            \node (r12) at ($(r1) + (0,-1)$) {};
            \node (rm12) at ($(rm1) + (0,-1)$) {};
            \node (m12) at ($(m1) + (0,-1)$) {};
            \draw (t1) -- (l1);
            \draw (t1) -- (r1);
            \draw (t1) -- (lm1);
            \draw (t1) -- (rm1);
            \draw (t1) -- (m1);
            \draw (l1) -- (lm1);
            \draw (r12) -- (r1);
            \draw (rm12) -- (rm1);
            \draw (m12) -- (m1);
            \draw[dashed] (l1) -- ++(0, -1);
            \draw[dashed] (lm1) -- ++(0, -1);
            \draw[dashed] (r12) -- ++(0, -1);
            \draw[dashed] (rm12) -- ++(0, -1);
            \draw[dashed] (m12) -- ++(0, -1);

            \node[draw=blue, ellipse, fit=(l1) (lm1), inner sep=4pt, fill=none, transform shape, scale=0.80] {};
            \node[draw=blue, ellipse, fit=(rm1) (rm12), inner sep=4pt, fill=none, transform shape, scale=0.80] {};
            \node[draw=blue, ellipse, fit=(r1) (r12), inner sep=4pt, fill=none, transform shape, scale=0.80] {};
            \node[draw=blue, ellipse, fit=(m1) (m12), inner sep=4pt, fill=none, transform shape, scale=0.80] {};

            \node[draw=none, fill=none, anchor=west] at ($(t1) + (0.1, 0)$) {$x$};
            \node[draw=none, fill=none, anchor=east] at ($(l1) + (-0.2, 0)$) {$a_1$};
            \node[draw=none, fill=none, anchor=west] at ($(r1) + (0.15, 0)$) {$a_5$};
            \node[draw=none, fill=none, anchor=west] at ($(lm1) + (0.25, 0)$) {$a_2$};
            \node[draw=none, fill=none, anchor=west] at ($(rm1) + (0.15, 0)$) {$a_4$};
            \node[draw=none, fill=none, anchor=west] at ($(m1) + (0.15, 0)$) {$a_3$};
            
            \node[draw=none, fill=none, shape=rectangle] at (0,-3.5) {\footnotesize Case 5B: Matched one neighbor pair};
        \end{tikzpicture}
    \end{subfigure}
    \hfill
    \begin{subfigure}[b]{0.45\textwidth}
        \centering\medskip
        \begin{tikzpicture}[every node/.style={circle, draw, fill=black, inner sep=1pt}, node distance=1cm]
            \node (t1) at (0,0) {};
            \node (l1) at ($(t1) + (-2.5,-1)$) {};
            \node (r1) at ($(t1) + (2,-1)$) {};
            \node (rm1) at ($(t1) + (1.0,-1)$) {};
            \node (m1) at ($(t1) + (0.0,-1)$) {};
            \node (lm1) at ($(t1) + (-1.5,-1)$) {};
            \node (r12) at ($(r1) + (0,-1)$) {};
            \draw (t1) -- (l1);
            \draw (t1) -- (r1);
            \draw (t1) -- (lm1);
            \draw (t1) -- (rm1);
            \draw (t1) -- (m1);
            \draw (l1) -- (lm1);
            \draw (r12) -- (r1);
            \draw (rm1) -- (m1);
            \draw[dashed] (l1) -- ++(0, -1);
            \draw[dashed] (lm1) -- ++(0, -1);
            \draw[dashed] (r12) -- ++(0, -1);
            \draw[dashed] (rm1) -- ++(0, -1);
            \draw[dashed] (m1) -- ++(0, -1);

            \node[draw=blue, ellipse, fit=(l1) (lm1), inner sep=4pt, fill=none, transform shape, scale=0.80] {};
            \node[draw=blue, ellipse, fit=(r1) (r12), inner sep=4pt, fill=none, transform shape, scale=0.80] {};
            \node[draw=blue, ellipse, fit=(m1) (rm1), inner sep=4pt, fill=none, transform shape, scale=0.80] {};

            \node[draw=none, fill=none, anchor=west] at ($(t1) + (0.1, 0)$) {$x$};
            \node[draw=none, fill=none, anchor=east] at ($(l1) + (-0.2, 0)$) {$a_1$};
            \node[draw=none, fill=none, anchor=west] at ($(r1) + (0.15, 0)$) {$a_5$};
            \node[draw=none, fill=none, anchor=west] at ($(lm1) + (0.25, 0)$) {$a_2$};
            \node[draw=none, fill=none, anchor=west] at ($(rm1) + (0.25, 0)$) {$a_4$};
            \node[draw=none, fill=none, anchor=east] at ($(m1) + (-0.25, 0)$) {$a_3$};
            
            \node[draw=none, fill=none, shape=rectangle] at (0,-3.5) {\footnotesize Case 5C: Matched two neighbor pairs};
        \end{tikzpicture}
    \end{subfigure}
    \caption{Degree five vertices; other edges could exist in the picture except those incident to~$x$, and the planar cyclic ordering of the edges at $x$ is not necessarily the same as depicted here}
    \label{fig:neighbors_five}
\end{figure}

As depicted, there are $11$ such configurations without isolated vertices, and we further divide them into three groups based on how they will be approached in the coming proofs:
\begin{itemize}
    \item \emph{Contractible cases:} Case~1A, Case~2A, Case~3A, Case~3B, Case~4B, Case~5A, Case~5C.
    \item \emph{Contractible with Kempe chaining, or Kempe-contractible cases:} Case~4A, Case~5B.
    \item \emph{Difficult cases (marked in italic in the pictures):} Case~2B, Case~4C.
\end{itemize}

\section{Resolving the contractible cases}\label{sec:controlled}

The task of this section is to provide the full proof of an ``easier part'' of \Cref{thm:main} which simply builds on the ideas of \Cref{obs:perfect} and on \Cref{cor:lowdeg_free}.
However, since this part does not resolve all of the configurations enumerated in \Cref{sec:classification} (the two difficult cases remain to be resolved later),
the formulation of this partial result is quite technical in order to transparently allow for future local modifications concerning the difficult cases.

For any planar graph $H$ with a fixed planar embedding and a vertex $x\in V(H)$, we define the following operation.
The \emph{wheel augmentation at $x$} adds to $H$ a new edge between every non-adjacent pair of consecutive neighbors of~$x$ in this embedding of~$H$.
Informally, this construction ``{makes a wheel (graph)}'' around $x$.
If, for some independent set of vertices $X\subseteq V(H)$, $\>H'$ is the graph obtained from $H$ by a sequence of wheel augmentations at every vertex $x\in X$ (the order of these augmentations does not matter since $X$ is independent), then $H'$ is called the \emph{wheel augmented graph} of~$H$ at~$X$.
Independent wheel augmentations trivially preserve planarity of $H$\onlyapx{ (Appendix:~\Cref{lemma_wheel_can})}.

\begin{toappendix}
\begin{observation}
\label{lemma_wheel_can}
Let $H$ be a plane embedded graph and $X\subseteq V(H)$ an independent set.
Then the wheel augmented graph $H'$ of $H$ at $X$ is again a plane embedded graph, and $H'$ is uniquely determined up to isomorphism.
\end{observation}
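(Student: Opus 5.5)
The plan is to perform the augmentations one vertex at a time and to check planarity and uniqueness after each step. Fix the plane embedding of $H$ and a vertex $x\in X$, and let $u_1,\dots,u_d$ be the neighbors of $x$ in the cyclic order of their edges around $x$. Consecutive neighbors $u_i,u_{i+1}$ (indices modulo $d$) share a face $f_i$ of the embedding, namely the face lying between the edges $xu_i$ and $xu_{i+1}$. Inside $f_i$ we can draw a new edge $u_iu_{i+1}$ running closely along the two edges $u_ix$ and $xu_{i+1}$. Such a curve stays inside a small neighborhood of the boundary segment $u_i x u_{i+1}$ of $f_i$, so it crosses nothing.

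We add all of these edges for $x$ at once, taking only the non-adjacent consecutive pairs, and draw them in the distinct angular sectors at $x$. The curves are then pairwise disjoint except at shared endpoints. The resulting drawing is plane, and in it $x$ sits inside a wheel formed by its neighbors. The degenerate cases $d\le 2$ need one remark: if $d=2$, the two sectors give the same pair, and we add at most one edge so that the graph stays simple. If $d\le 1$, nothing is added.

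Next we iterate over $X$. Since $X$ is independent, no vertex of $X$ is a neighbor of another vertex of $X$. Every new edge added at $x$ joins two neighbors of $x$, so it is not incident with any $y\in X$. Hence the rotation at each $y\in X\setminus\{x\}$, meaning its set of incident edges and their cyclic order, is unchanged by the augmentation at $x$. The pairs of consecutive neighbors of $y$ are therefore the same in $H$ and in every intermediate graph. Whether a given pair is already adjacent may change, because an edge added at $x$ could coincide with one we want at $y$. This only affects whether we add a duplicate, and we suppress duplicates. So the final edge set is exactly $E(H)$ together with all pairs consecutive around some $x\in X$, independently of the order of processing. This shows the augmented graph is uniquely determined, and by induction on $|X|$ it remains plane embedded.

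The one subtle point is the claim that the drawing at a later vertex $y$ can still be routed in its sectors after the edges for $x$ have been added. The edges added at $x$ lie in thin strips hugging the edges at $x$, so they do not enter the sectors at $y$ next to $y$. When routing the edge at $y$, I would draw it even closer to the path $u\,y\,w$, inside the corresponding sector at $y$. The same neighborhood argument as before then gives a crossing-free curve.
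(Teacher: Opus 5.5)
Your proposal is correct and follows essentially the same route as the paper's proof. Both draw each new edge inside the face shared by two consecutive neighbors of $x$. Both use the independence of $X$ to show that an augmentation at one vertex never touches the rotation at another vertex of $X$, so the order of steps is irrelevant. You additionally make explicit the routing along the boundary walk, the degenerate cases $d\le 2$, and the suppression of duplicate edges, all of which the paper leaves implicit.
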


\begin{proof}
Consider any $x\in X$. Then, in the process of wheel augmentation of $H$ at $X$, every two consecutive neighbors $y$ and $z$ of $x$ in $H$ share a face together and with $x$.
So, the edge $yz$ can be drawn uncrossed within this face.
Furthermore, as both $y,z$ are neighbors of $x$ and $X$ is independent, neither of $y,z$ is a subject of subsequent wheel augmentation steps leading to~$H'$.
Therefore, no conflict arises between wheel augmentation steps at distinct vertices of~$X$, the process preserves planarity and defines $H'$ uniquely up to isomorphism.
\end{proof}

\end{toappendix}

Let $M$ be a maximum matching of $G$ and $H=H(G,F)$ be the contracted graph of $G$ for some~$F\subseteq E(G)$.
Note that, unlike in \Cref{obs:perfect}, the edge set $F$ which we contract in~$G$ while constructing our coloring,
will not always be equal to the matching~$M$ -- in specific cases we may choose $F$ to be ``slightly different'' from~$M$, but the difference will occur only in the closed neighborhood of vertices unmatched by~$M$.
While the proof of \Cref{lem:controlled} itself only needs to contract the matching edges for the purpose of constructing our coloring,
proofs of the difficult cases (see \Cref{sec:difficult2}) will require to contract, e.g., three vertices on a path of length~$2$ into one supervertex.
In the first reading of the statements and proofs below, it is safe to assume ``$F=M$'' and consider the fine details regarding $F$ later on.

In the next definition we aim to capture changes of colorings of $H$ which ``leave intact'' (up to a permutation of the colors)
vertices of $H$ coming by contractions from chosen components of the graph $G[D(G)]$ defined with respect to~$M$.
For any two colorings $c$ and $\bar c$ of $H$ and a subset $U$ of vertices of $G$ unmatched by~$M$, 
we say that $\bar c$ is a \emph{$U$-protecting recoloring} of~$c$ if the following holds for every vertex $x\in U$ and the connected component $D_x$ of $G[D(G)]$ containing $x$:
if $V_x\subseteq V(H)$ are the vertices into which some vertex of $D_x$ is contracted via~$F$, then the coloring $\bar c$ differs from $c$ on $V_x$ only by a permutation of the colors.

The next lemma, \Cref{lem:controlled}, informally claims that we ``can resolve'' -- in the sense of obtaining a CFON $4$-coloring (analogously to \Cref{obs:perfect}),
all matched vertices of $G$ and also all unmatched vertices of $G$, except those whose local configuration is one of the two difficult cases (Case~2B, Case~4C) or which are in a very specific local situation.
Importantly, the cumbersome formulation of \Cref{lem:controlled} gives us the power to resolve these resolvable unmatched vertices \emph{after} we (in yet undetermined way) resolve the difficult cases in the coming sections, and without conflicts or discrepancies between the cases.
The formulation also ensures that nothing else than the cases described in \Cref{lem:controlled}\,c) remains ``unresolved''.

\begin{lemma}\label{lem:controlled}
Let $G$ be a planar graph without isolated vertices. There exists a maximum matching $M$ of $G$ and a subset $U\subseteq V(G)\setminus V(M)$ of unmatched vertices such that the following hold.
Let $W=V(G)\setminus(U\cup V(M))$ be the (independent) set of unmatched vertices complementary to~$U$, and
$G_W$ denote (with respect to~$M$) the union of the subgraphs $G_x$ for $x\in W$.
\begin{enumerate}[a)]
 \item Every vertex of $V(G)\setminus V(M)$ is of degree at most~$5$ in~$G$ (cf.~\Cref{cor:lowdeg_free}).
 \item For any set of edges $F\subseteq E(G)$ such that $F\cap E(G_W)=M\cap E(G_W)$, let $H=H(G,F)$ be the corresponding contracted graph and $H'$ be the wheel augmented graph of $H$ at~$W$.
Then every proper $4$-coloring $c_1$ of $H'$ admits a $U$-protecting recoloring to a proper $4$-coloring $c$ of $H'$ such that,
denoting by $c'$ the $4$-coloring of $G$ into which the coloring $c$ lifts (cf.~\Cref{obs:perfect}),
every vertex $x\in W$ has a neighbor $u$ in~$G$ whose color $c'(u)$ is unique in the open neighborhood~$N(x)$.
\item For every vertex $x\in U$ and every $z\in S(x)$ (including $z=x$) such that the degree of $z$ in~$G$ is at most~$5$, let $M'$ be any matching obtained from $M$ in one of the following ways:
\begin{itemize}\vspace*{-1ex}%
\item $M' := M \triangle E(P)$ for an even $M$-alternating $x$--$z$ path in~$G$ (switching from $x$ to~$z$),
\item $M' := M \triangle E(Q)$ for $x=z$ and an $M$-alternating cycle $Q$ which is contained in the same connected component of $G[D(G)]$ as~$x$ (switching a cycle ``nearby~$x$'').
\end{itemize}
Then, with respect to~$M'$;
\begin{itemize}\vspace*{-1ex}%
\item the configuration at $z$ is one of the two difficult cases (Case~2B, Case~4C), or
\item the configuration at $z$ is one of the two Kempe-contractible cases (Case~4A, Case~5B) and there exist $4$ pairwise disjoint paths between the neighbors of $z$ and the set~$A(G)$.
\end{itemize}
\end{enumerate}
\end{lemma}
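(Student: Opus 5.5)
We will construct $M$ and the partition $U\cup W$ of the unmatched vertices by an iterative improvement. Start from a maximum matching given by \Cref{cor:lowdeg_free}, so part a) holds. For each unmatched $x$, look at all $z\in S(x)$ with $\deg_G(z)\le 5$ and at all matchings $M'$ obtained by the two allowed operations (switching from $x$ to $z$, or switching an alternating cycle in the component of $G[D(G)]$ containing $x$). If some such $M'$ puts $z$ into a configuration that is contractible (Cases 1A, 2A, 3A, 3B, 4B, 5A, 5C), or Kempe-contractible (4A, 5B) with at most three disjoint paths from $N(z)$ to $A(G)$, we perform that switch and place $z$ into $W$. Otherwise $x$ goes to $U$, and part c) holds for $x$ by construction. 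By \Cref{clm:Gx_structure} and \Cref{lem:switch_preserves_other_local_configs}, a switch inside $G_x$ does not touch the components $D_u$ of other unmatched $u$, nor their case. So these choices can be made independently for all unmatched vertices, and each $z\in W$ still has degree at most $5$. For $x\in W$ the sets $S(x)$ are pairwise disjoint, and $W$ is independent because unmatched vertices are nonadjacent.

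For part b), fix $F$ agreeing with $M$ on $E(G_W)$, the graph $H$, its wheel augmentation $H'$, and a proper $4$-coloring $c_1$ of $H'$. For each $x\in W$ we argue locally in $G_x$, which $F$ contracts exactly as $M$ does. The supervertices $[a_i\,m(a_i)]$ of the neighbors of $x$, each containing a neighbor $a_i$ and its partner $m(a_i)$, form a wheel around $[x]$ in $H'$, so consecutive ones get distinct colors. As in \Cref{obs:perfect}, every neighbor $w$ of $a_i$ other than $m(a_i)$ lies in a supervertex adjacent to $[a_i m(a_i)]$. What $x$ needs is a color that appears exactly once among the at most five lifted colors $c'(a_i)$, where a pair $a_i a_j$ matched to each other counts as a single supervertex contributing the same color twice.

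We then check each contractible case. In 1A, 2A and 3A the neighbor supervertices are pairwise adjacent in $H'$ through the wheel: a triangle in 3A, an edge in 2A, and trivially in 1A. Hence all colors are distinct and unique. In 3B the pair $a_1a_2$ gives one color twice, while $a_3$ is adjacent to that supervertex in the wheel, so $c'(a_3)$ is unique. In 4B the wheel has three supervertices $[a_1a_2]$, $[a_3\ast]$, $[a_4\ast]$, forming a triangle, and either of the latter two colors is unique. In 5C the three supervertices again form a triangle, and the singly matched $a_5$ has a unique color. In 5A we have a $5$-wheel on five distinct supervertices colored from $4$ colors with consecutive colors distinct. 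Any color class on a $5$-cycle has size at most $2$, and with $4$ colors on $5$ vertices some class has size exactly $1$. None of these cases needs any recoloring.

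The Kempe-contractible cases 4A and 5B are the main obstacle. Take 4A: the $4$-wheel may be colored $1,2,1,2$, in which case no color is unique. We will then swap along a Kempe chain through one of the rim supervertices, say a $(1,3)$-chain starting at $[a_1\ast]$. This fails only if the chain reaches $[a_3\ast]$, and then the $(2,4)$-chain from $[a_2\ast]$ is separated from $[a_4\ast]$ by planarity, so swapping that one works. The swap must be $U$-protecting and must not spoil other $W$-vertices that are already resolved. To achieve this we will use the assumption that there are at most three disjoint paths from $N(z)$ to $A(G)$. By Menger's theorem, some small set of at most three vertices separates part of the rim from $A(G)$, and hence from everything outside $D_x$. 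We will choose the Kempe swap to stay within that separated region, so that it permutes colors only locally. This is the step needing the most care. Also, the supervertices of other $D_u$ with $u\in U$ may only see a global permutation of colors, which a chain confined inside $G_x\setminus A(G)$ respects. Case 5B is handled in the same way: the singly matched rim vertices form a $4$-cycle pattern, and a local Kempe swap makes one of their colors unique. Processing the vertices of $W$ one at a time then yields $c$.
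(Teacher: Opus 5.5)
\paragraph{Verdict.} Part a) and your check of the contractible configurations in b) are correct and agree with the paper; your parity count on the rim $5$-cycle in Case~5A is the paper's ``three colors, one of them unique''. The genuine gap is in the Kempe-contractible Cases~4A and~5B, which is the real content of b).

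\paragraph{Why your Kempe argument fails.} The textbook exchange you start from (swap the $(1,3)$-chain of $t_1$, or else the $(2,4)$-chain of $t_2$) can recolor chains that leave $D_0$ through $A(G)$. Here $D_0$ is the component of $G[D(G)]$ containing $x$, and $D_0'$ is its contraction. Such chains can run into:
\begin{itemize}
\item components $D_u$ with $u\in U$, recoloring only part of $V_u$, which is in general not a permutation of colors there;
\item neighborhoods of other vertices of $W$, destroying witnesses you already fixed.
\end{itemize}
Your Menger repair does not address this. A separator $S$ with $|S|\le 3$ between $N(x)$ and $A(G)$ in $G$ gives no control over Kempe chains of $H'$: a $(1,3)$-chain passes straight through every vertex of $S$ colored $1$ or $3$. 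Exchanging colors only on the part of a chain inside the separated region is in general not a proper coloring. The sentence ``we will choose the Kempe swap to stay within that separated region'' is therefore unsupported. Moreover, the second exchange of the standard argument is in general not confined to $D_0'$ at all, and the paper does not confine it either.

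\paragraph{The missing idea.} The paper analyses the $(1,3)$-chain $L_1$ of $t_1$ in $H'-[x]$ by a trichotomy. In it, the four disjoint paths are \emph{produced} by the failure of the exchanges rather than used to confine them.
\begin{itemize}
\item[(i)] If $L_1\subseteq V(D_0')$ and $t_3\notin L_1$, exchange on $L_1$. This is confined to $D_0'$.
\item[(ii)] If $L_1$ contains a $t_1$--$t_3$ path $P_1'$ whose lift meets $A(G)$ in at most one vertex, then the cycle $P_1'+[x]$ splits no other component of $G[D(G)]$. One then exchanges $2,4$ on \emph{all} $(2,4)$-chains on the side of $t_4$. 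Apart from $[x]$ the cycle carries only colors $1,3$, so this is a uniform permutation of that whole side. Hence it is $U$-protecting, and every other vertex of $W$, whose neighborhood lies in one closed side, keeps its unique color.
\item[(iii)] Otherwise, each chain of $t_i$ contains a path $Q_i$ from $a_i$ through $D_0$ to $A(G)$. Different color pairs make $Q_1\cup Q_3$ disjoint from $Q_2\cup Q_4$. The failure of (ii), and of its symmetric version for colors $2,4$, forces $Q_1\cap Q_3=Q_2\cap Q_4=\varnothing$. So $x$ would be bad, contradicting $x\in W$.
\end{itemize}

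\paragraph{A smaller point on c).} The sets $S(x)$, $x\in W$, need not be pairwise disjoint. Take $K_{1,3}$ with center $a$, leaves $x,u,d$, and $M=\{ad\}$. Both $x$ and $u$ are in Case~1A, and $d\in S(x)\cap S(u)$. Switching $x$ to $d$ even changes $S(u)$ from $\{u,d\}$ to $\{u,x\}$. So a single pass of switches does not guarantee c) for the final matching. Instead, as the paper does, choose $M$ satisfying a) with the minimum number of bad unmatched vertices. A switch from $x$ preserves the configuration of every other unmatched vertex, and the four-paths condition is matching-independent. Hence any violation of c) contradicts minimality.
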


\begin{proof}
We fix a planar embedding of~$G$.
For a maximum matching $M'$ of $G$, we say that an unmatched vertex $v\in V(G)\setminus V(M')$ is \emph{bad} if the configuration at $v$ with respect to $M'$ is either one of the two difficult cases,
or one of the two Kempe-contractible cases with existing $4$ pairwise disjoint paths between $N(v)$ and~$A(G)$.
We pick $M$ as a maximum matching of $G$ such that all unmatched vertices are of degree at most~$5$ (\Cref{cor:lowdeg_free}) and that the set $U$ of bad vertices unmatched by $M$ is minimal over all such~$M$.
So, in particular, condition a) holds~true.

Verification of condition c) is straightforward;
if there were $x\in U$ and $z\in S(x)$ or $z=x$ of degree at most~$5$, and a switched matching $M'$ as in c), such that $z$ is not bad with respect to~$M'$, then we would consider $M'$ in place of~$M$.
By \Cref{clm:Gx_structure}, this switch to~$M'$ does not change the local matching configuration at any of the unmatched vertices other than $z$, and so it contradicts our minimality assumption on~$U$.
It thus remains to verify condition b) for all $x\in W$ (the non-bad unmatched vertices).

\medskip
Let $c_1:V(H')\to\{1,2,3,4\}$ be an arbitrary proper coloring of the graph~$H'$ which is a wheel augmented graph of planar $H=H(G,F)$, and so $H'$ is planar and \Cref{thm:FCT} applies.
Pick an unmatched vertex $x\in W$ such that the configuration at $x$ is one of the contractible cases in the matching $M$.
By the assumptions, $F$ coincides with $M$ in the neighborhood of~$x$.
Observe in Figures~\ref{fig:matching_one}--\ref{fig:neighbors_five} that in each of these cases at least one of the adjacent matching edges $f\in M$ (and so~$f\in F$ as well)
has exactly one of its ends adjacent to $x$ in $G$ (matched outwards in the picture), 
and the degree of $[x]$ is at most~$3$ in contracted $H$ except in Case~5A where it is~$5$.

Let $[f]$ denote the supervertex of $f$ in the contracted graph $H$. 
If $\deg_{H}(x)\leq3$, then $[f]$ is adjacent to every other neighbor of $x$ in $H'$, and so $[f]$ has a unique color among the neighbors of $x$ in any proper coloring $c_1$ of~$H'$.
And since exactly one end of $f$ is a neighbor of $x$ in $G$, the lifted coloring $c'$ of $G$ assigns this color $c_1([f])$ to a unique vertex in the neighborhood of $x$ in~$G$, too.
In Case~5A, the neighbors of $x$ in $H'$ span a $5$-cycle, and thus they receive $3$ distinct colors in any proper $4$-coloring $c_1$ of~$H'$ and one of these $3$ colors is unique.
Since all neighbors of $x$ are matched outwards in Case~5A, this color is also unique for $x$ in the lifted coloring $c'$ of $G$.
We have verified condition b) for such $x$ and any~$c=c_1$.

\medskip
Now, pick an unmatched vertex $x\in W$ such that the configuration at $x$ is one of the two Kempe-contractible cases.
Our aim is to show that, since $x$ is not bad $(x\not\in U$), condition~b) holds true for $x$.
Again, we will use that $F$ coincides with the matching $M$ in the neighborhood of~$x$.
So, in each of Case~4A and Case~5B the degree of $[x]$ in $H$ is $4$, and whenever a $4$-coloring $c_1$ assigns $3$ distinct colors to the neighbors of $[x]$ in~$H'$,
we get a unique color among them, which yields a unique color among the neighbors of $x$ in $G$ in the lifted coloring~$c'$ for any $c=c_1$.

We may hence further assume that $c_1$ gives only $2$ colors to the $4$ neighbors of $[x]$ in~$H'$ in a proper way.
More specifically, and without loss of generality, let $t_1,t_2,t_3,t_4$ be the neighbors of $x$ in $H'$ in this cyclic order, and $c_1(t_1)=c_1(t_3)=1$ and $c_1(t_2)=c_1(t_4)=2$.
Let $D_0$ be the connected component of $G[D(G)]$ containing $x$, and let $D_0'$ result from $D_0$ by contracting the edges of $M\cap E(D_0)$.
Furthermore, note that $M\cap E(D_0)=F\cap E(D_0)$ (so $D_0'\subseteq H'$), all vertices of $D_0$ except $x$ are matched by $M\cap E(D_0)$, 
and no edge of $F\setminus M$ is incident to~$D_0$ by the assumption $F\cap E(G_W)=M\cap E(G_W)$.

The next arguments are illustrated in \Cref{fig:4Kempechain}.

\begin{figure}[ht]
    \centering
    \begin{subfigure}[b]{0.47\textwidth}
        \centering
        \begin{tikzpicture}[every node/.style={circle, draw, fill=black, inner sep=1pt}, node distance=1cm]
            \node (t1) at (0,0) {};
            \node (l1) at ($(t1) + (-1.5,-1)$) {};
            \node (r1) at ($(t1) + (1.5,-1)$) {};
            \node (rm1) at ($(t1) + (0.5,-1)$) {};
            \node (lm1) at ($(t1) + (-0.5,-1)$) {};
            \node (r12) at ($(r1) + (0,-1)$) {};
            \node (rm12) at ($(rm1) + (0,-1)$) {};
            \node (l12) at ($(l1) + (0,-1)$) {};
            \node (lm12) at ($(lm1) + (0,-1)$) {};
            \draw (t1) -- (l1);
            \draw (t1) -- (r1);
            \draw (t1) -- (lm1);
            \draw (t1) -- (rm1);
            \draw (l1) -- (l12);
            \draw (r12) -- (r1);
            \draw (rm12) -- (rm1);
            \draw (lm12) -- (lm1);
            \draw[densely dashed] (l12) -- ++(0, -1) (lm12) -- ++(0, -1) (r12) -- ++(0, -1) (rm12) -- ++(0, -1);
	    \node (q12) at ($(l12)-(1,0)$) {};
	    \node (q13) at ($(q12)-(0,1)$) {};
	    \node (q1) at ($(l1)-(1,0)$) {};
	    \node (q2) at ($(q1)-(1,0)$) {};
	    \node (q3) at ($(q2)-(0,1)$) {};
	    \node (q4) at ($(q3)-(0,1)$) {};
            \draw[densely dashed] (l12)--(q12)--(q13)-- ++(0,-1) (q13)--(q4) (l1)--(q1)--(q2)--(q3)--(q4)-- ++(0,-1);
	    \draw (q12) -- (q13) (q1) -- (q2) (q3) -- (q4) ;

	\begin{scope}[on background layer]
            \node[draw=blue, ellipse, fit=(l1) (l12), inner sep=4pt, fill=green!30, transform shape, scale=0.80] {};
            \node[draw=blue, ellipse, fit=(rm1) (rm12), inner sep=4pt, fill=green!30, transform shape, scale=0.80] {};
            \node[draw=blue, ellipse, fit=(lm1) (lm12), inner sep=4pt, fill=red!30, transform shape, scale=0.80] {};
            \node[draw=blue, ellipse, fit=(r1) (r12), inner sep=4pt, fill=red!30, transform shape, scale=0.80] {};
	    \draw[blue] ($(l1)+(0.15,0)$)--($(lm1)-(0.15,0)$) ($(lm1)+(0.15,0)$)--($(rm1)-(0.15,0)$);
	    \draw[blue] ($(rm1)+(0.15,0)$)--($(r1)-(0.15,0)$) ($(l1)+(0,0.3)$) to[bend left=83] ($(r1)+(0,0.3)$);
            \node[draw=blue, ellipse, fit=(q12) (q13), inner sep=4pt, fill=orange!40, transform shape, scale=0.80] {};
            \node[draw=blue, ellipse, fit=(q1) (q2), inner sep=4pt, fill=orange!40, transform shape, scale=0.80] {};
            \node[draw=blue, ellipse, fit=(q3) (q4), inner sep=4pt, fill=green!30, transform shape, scale=0.80] {};
            \node[draw, inner sep=46, shape=rectangle, rounded corners, densely dotted, semithick, fill=none] at (-2.7,-2.3) {};
	\end{scope}

            \node[draw=none, fill=none, anchor=west] at ($(t1) + (0.15, 0)$) {$x$};
            \node[draw=none, fill=none, anchor=east] at ($(l1) - (0.15, 0.5)$) {$t_1$};
            \node[draw=none, fill=none, anchor=east] at ($(r1) - (0.15, 0.5)$) {$t_4$};
            \node[draw=none, fill=none, anchor=east] at ($(lm1) - (0.15, 0.5)$) {$t_2$};
            \node[draw=none, fill=none, anchor=east] at ($(rm1) - (0.15, 0.5)$) {$t_3$};
            \node[draw=none, fill=none] at ($(l12)-(0,1.5)$) {$L_1$};

        \end{tikzpicture}
	\bigskip
        \subcaption{\footnotesize The $(1,3)$-Kempe chain $L_1$ containing~$t_1$ in $H'-x$ does not include $t_3$ and does not leave the component~$D_0'$.
		Switching colors green and orange within $L_1$ yields a unique color in the neighborhood of~$x$.} 
    \end{subfigure}
    \hfill
    \begin{subfigure}[b]{0.47\textwidth}
        \centering
        \begin{tikzpicture}[every node/.style={circle, draw, fill=black, inner sep=1pt}, node distance=1cm]
            \node (t1) at (0,0) {};
            \node (l1) at ($(t1) + (-2.0,-1)$) {};
            \node (r1) at ($(t1) + (2.0,-1)$) {};
            \node (rm1) at ($(t1) + (1.0,-1)$) {};
            \node (m1) at ($(t1) + (0.0,-1)$) {};
            \node (lm1) at ($(t1) + (-1.0,-1)$) {};
            \node (r12) at ($(r1) + (0,-1)$) {};
            \node (rm12) at ($(rm1) + (0,-1)$) {};
            \node (m12) at ($(m1) + (0,-1)$) {};
            \draw (t1) -- (l1);
            \draw (t1) -- (r1);
            \draw (t1) -- (lm1);
            \draw (t1) -- (rm1);
            \draw (t1) -- (m1);
            \draw (l1) -- (lm1);
            \draw (r12) -- (r1);
            \draw (rm12) -- (rm1);
            \draw (m12) -- (m1);
            \draw[densely dashed] (l1) -- ++(0, -1) (lm1) -- ++(0, -1) (r12) -- ++(0, -1) (rm12) -- ++(0, -1) (m12) -- ++(0, -1);
	    \draw[densely dashed, decorate, decoration={zigzag}] ($(m12)+(0,-1)$) to[bend right=53] ($(r12)+(0,-1)$);
	    \node (l12) at ($(l1)-(0,1)$) {};
	    \node (l13) at ($(l12)-(0,1)$) {};
	    \node (q2) at ($(l1)-(1,0)$) {};
	    \node (q3) at ($(q2)-(1,0)$) {};
	    \node (q4) at ($(q2)-(0,1)$) {};
	    \node (q5) at ($(q4)-(0,1)$) {};
	    \draw[densely dashed] (l12)-- ++(0,-1) (lm1)-- ++(0,-1) (q2)--(q4) (q2)-- ++(0,1) (q4)-- ++(-1,0) (q5)-- ++(-1,0);
	    \draw (l12) -- (l13) (q2) -- (q3) (q4)--(q5) ;

	\begin{scope}[on background layer]
            \node[draw=blue, ellipse, fit=(l1) (lm1), inner sep=4pt, fill=red!30, transform shape, scale=0.80] {};
            \node[draw=blue, ellipse, fit=(rm1) (rm12), inner sep=4pt, fill=red!30, transform shape, scale=0.80] {};
            \node[draw=blue, ellipse, fit=(r1) (r12), inner sep=4pt, fill=green!30, transform shape, scale=0.80] {};
            \node[draw=blue, ellipse, fit=(m1) (m12), inner sep=4pt, fill=green!30, transform shape, scale=0.80] {};
	    \draw[blue] ($(m1)+(0.15,0)$)--($(rm1)-(0.15,0)$) ($(lm1)+(0.3,0)$)--($(m1)-(0.15,0)$);
	    \draw[blue] ($(rm1)+(0.15,0)$)--($(r1)-(0.15,0)$) ($(l1)+(0,0.16)$) to[bend left=63] ($(r1)+(0,0.3)$);
            \node[draw=blue, ellipse, fit=(l12) (l13), inner sep=4pt, fill=yellow!40, transform shape, scale=0.80] {};
            \node[draw=blue, ellipse, fit=(q2) (q3), inner sep=4pt, fill=yellow!40, transform shape, scale=0.80] {};
            \node[draw=blue, ellipse, fit=(q4) (q5), inner sep=4pt, fill=red!30, transform shape, scale=0.80] {};
            \node[draw, inner sep=52, shape=rectangle, rounded corners, densely dotted, semithick, fill=none] at (-2.5,-1.7) {};
	\end{scope}

            \node[draw=none, fill=none, anchor=west] at ($(t1) + (0.15, 0)$) {$x$};
            \node[draw=none, fill=none] at ($(l1) + (0.5,-0.5)$) {$t_4$};
            \node[draw=none, fill=none, anchor=east] at ($(r1) - (0.15, 0.5)$) {$t_3$};
            \node[draw=none, fill=none, anchor=east] at ($(m1) - (0.15, 0.5)$) {$t_1$};
            \node[draw=none, fill=none, anchor=east] at ($(rm1) - (0.15, 0.5)$) {$t_2$};
            \node[draw=none, fill=none] at ($(r12) - (0.15, 1.5)$) {$P_1$};
            \node[draw=none, fill=none] at ($(lm1)-(0,2)$) {$L_2$};
            
        \end{tikzpicture}
        \subcaption{\footnotesize The $\{1,3\}$-colored path $P_1$ forms with $x$ a cycle separating the $(2,4)$-Kempe chains of $H'-x$ into two
		groups, and in one of them (say, $L_2$ including the contracted vertex $t_4$) we switch colors red and yellow.
		This again yields a unique color (red of~$t_2$) in the neighborhood of~$x$ in~$G$.} 
    \end{subfigure}
    \caption{Illustrating the proof of \Cref{lem:controlled}: Kempe-chain exchanges in Case~4A and Case~5B -- 
	both cases are covered by the same collection of arguments since the local situation looks ``the same'' in the contracted graph~$H(G,M)$.
	The wheel augmentation at $x$ in $H'$ is depicted by blue edges between the vertices $t_1,t_2,t_3,t_4$ which result by contractions of the depicted matching edges of~$G$.
	Colors $1$ and $2$ of the neighbors $t_1,t_2,t_3,t_4$ of $x$ in $H'$ are in order green and red in the picture.}
    \label{fig:4Kempechain}
\end{figure}

Let $L_1$ denote the $(1,3)$-Kempe chain in $H'-x$ containing~$t_1$.
If $L_1\subseteq V(D_0')$ and $t_3\not\in L_1$, then we exchange the colors $1$ and $3$ within $L_1$; more precisely, we define a new coloring $c_2$ of $H'$
by setting $c_2(v)=c_1(v)$ for $v\in V(H'-x)\setminus L_1$, $c_2(x)=4$ and $c_2(v)=4-c_1(v)$ for $v\in L_1$.
Then $c_2$ is again proper on $H'$ and $c_2$ assigns $3$ distinct colors to the neighbors of $x$, and hence condition b) holds true for $x$ in $c_2$, as argued above.
Importantly, the new coloring $c_2$ is a $U$-protecting recoloring of our (so far arbitrary) $c_1$ by \Cref{clm:Gx_structure}, since $V(D_0)\cap V(G_u)=\varnothing$ for the other unmatched vertices $u\not=x$.
In particular, the validity of b) is preserved (from $c=c_1$ to $c=c_2$) for the other unmatched vertices.

We can perform a similar $U$-protecting Kempe-chain exchange in the case that $L_1$ contains a path $P_1'\subseteq H'-[x]$ from $t_1$ to $t_3$ 
which lifts into a $t_1$--$t_3$ path $P_1$ in $G-x$ such that $|V(P_1)\cap A(G)|\leq1$.
In this case we let $L_2$ be the union of all $(2,4)$-Kempe chains in $H'-[x]$ which are drawn in the face of the cycle $P_1'+x$ which contains~$t_4$.
We now exchange the colors $2$ and $4$ within~$L_2$; we define a new coloring by $c_2(v)=c_1(v)$ for $v\in V(H'-[x])\setminus L_2$, $c_2(x)=3$ and $c_2(v)=6-c_1(v)$ for $v\in L_2$, which is proper on $H'$.
Again, we arrive at the case of $3$ colors in the neighborhood of $x$ solved above, thus verifying b) for~$x$.
Furthermore, since $|V(P_1)\cap A(G)|\leq1$, no other component of $G[D(G)]$ is ``split'' by the cycle $P_1+x$ drawn within~$G$,
and since the vertices of $P_1'$ do not participate in our color exchange, the other components of $G[D(G)]$ are only possibly affected by a permutation of colors.
Hence, $c_2$ indeed is a $U$-protecting recoloring of $c_1$ by definition and the validity of b) is not affected for the other unmatched vertices.

Since there are no edges between $V(D_0)$ and $C(G)$ by \Cref{clm:GallaiEdmonds}, the only remaining case is that the lift of $L_1$ to $G$ 
contains a path $Q_1$ from a neighbor $a_1$ of $x$ (where $a_1$ contracts into $t_1$ in $H'$) to the set $A(G)$.
We may further assume that $Q_1$ is contained in $D_0$ except its end in~$A(G)$.
By the previous arguments and symmetry, analogous paths $Q_2,Q_3,Q_4$ exist from other neighbors of $x$ which contract to $t_2,t_3,t_4$ in this order.
Now $Q_1\cup Q_3$ is disjoint from $Q_2\cup Q_4$ by their definition via Kempe chains.
If, say, $Q_1$ intersected $Q_3$, then there would be a $t_1$--$t_3$ path $P_1\subseteq Q_1\cap Q_3$ as considered in the previous paragraph.
Consequently, $Q_1,Q_2,Q_3,Q_4$ are pairwise disjoint and $x$ is thus bad, contradicting our assumption~$x\not\in U$.
\end{proof}

\section{Global planarity constraints}\label{sec:density}

With \Cref{lem:controlled} at hand, for the rest of the paper, we only need to focus on resolving the remaining difficult local matching configurations 
(cf.~\Cref{lem:controlled}\,c)) -- Case~2B and Case~4C, and on the subcase of having $4$ disjoint paths between $N(x)$ and~$A(G)$.

In their analysis, planarity of the graph $G$ will play an important role captured via the following lemma.
\onlyapx{Its proof and related comments and consequences are left for the Appendix.}

\begin{toappendix}
\label{apx:density}

\end{toappendix}

\begin{lemma2rep}\apxmark{apx:density}%
\label{lemma_gadgets}
Let $G_1$ be a simple graph with a vertex partition $V(G_1)=A\cup B$ where $|A|=k$.
Assume that the vertices of $B$ can be partitioned into exactly $k+1$ disjoint subsets, each of which forms, as a subgraph of~$G_1$, a \emph{gadget} with some vertices of $A$ (not necessarily distinct between the gadgets) that is of one of the following types:
\begin{enumerate}
    \item[\textbf{(I)}] a single vertex $b\in B$ of degree $4$, adjacent to four vertices of $A$;
    \item[\textbf{(II)}] a copy of $K_{2,3}$ with bipartition $(X,Y)$, where $X\subseteq B$, $|X|=2$, and $Y\subseteq A$, $|Y|=3$;
    \item[\textbf{(III)}] a triangle $T$ on three vertices of $B$, together with two vertices $a_1,a_2\in A$ such that each of $a_1,a_2$ is adjacent to all three vertices of $T$.
\end{enumerate}\smallskip
Then $G_1$ is not planar.
\end{lemma2rep}

\begin{proof}
Suppose for a contradiction that $G_1$ is planar, and fix a planar embedding.

We refer to the triangles arising from gadgets of type \textbf{(III)} as \emph{triangle gadgets}. For every triangle gadget $T$, the two vertices of $A$ adjacent to all three vertices of $T$ cannot lie in the same face of $T$. Indeed, once one such vertex is embedded inside $T$ and connected to all three vertices of $T$, the interior of $T$ is subdivided into three triangular faces, and no second vertex in the same face can be adjacent to all three vertices of $T$ without creating a crossing. Thus, in any planar embedding of a gadget of type \textbf{(III)}, one of these two vertices lies in the interior of $T$ and the other in the exterior. Each such triangle $T$ separates the plane into an interior and an exterior region. If a vertex lies strictly inside $T$, then any edge from it to a vertex outside $T$ would cross $T$, contradicting planarity. Hence any gadget lying strictly inside $T$ is adjacent only to vertices of $A$ that also lie inside $T$.

In particular, the interiors of triangle gadgets form a laminar family: for any two such triangles, their interiors are either disjoint or one is contained in the other.

\medskip

For a triangle gadget $T$, let $A(T)$ be the set of vertices of $A$ lying strictly inside $T$, and let $g(T)$ denote the number of gadgets whose $B$-vertices lie in the interior of $T$, including $T$ itself. We claim that
\begin{equation}
    g(T)\le |A(T)|.
    \label{eq:main}
\end{equation}

We prove \eqref{eq:main} by induction on the number of triangle gadgets contained in the interior~of~$T$.

\medskip
\noindent\textbf{Base case.}
Assume that $T$ contains no smaller triangle gadget. Let $x$ and $y$ be the numbers of gadgets of type \textbf{(I)} and \textbf{(II)} inside $T$, and set $a:=|A(T)|$. Then
\[
g(T)=1+x+y.
\]

All these $x+y$ gadgets are adjacent only to vertices of $A(T)$. Consider the bipartite subgraph $H$ induced by $A(T)$ and these gadgets. Each gadget of type \textbf{(I)} contributes one vertex of $B$, and each gadget of type \textbf{(II)} contributes two vertices of $B$. Hence
\[
|V(H)|=a+x+2y,
\qquad
|E(H)|=4x+6y.
\]

Since $H$ is planar and bipartite, we have $|E(H)|\le 2|V(H)|-4$ whenever $x+y>0$. Therefore
\[
4x+6y \le 2(a+x+2y)-4,
\]
which simplifies to
\[
x+y \le a-2.
\]
Thus
\[
g(T)=1+x+y \le a-1 < a.
\]

If $x+y=0$, then $g(T)=1$. By the observation above, one of the two vertices of $A$ adjacent to all three vertices of $T$ lies in the interior of $T$. Hence $a=|A(T)|\ge 1$, and therefore $g(T)\le a$.

\medskip
\noindent\textbf{Inductive step.}
Suppose $T$ contains smaller triangle gadgets, and assume \eqref{eq:main} holds for all of them. Let $T_1,\dots,T_m$ be the maximal triangle gadgets strictly inside $T$, whose interiors are pairwise disjoint. Define
\[
A_0 := A(T)\setminus \bigcup_{i=1}^m A(T_i).
\]

Let $x$ and $y$ be the numbers of gadgets of type \textbf{(I)} and \textbf{(II)} inside $T$ but outside all $T_i$. By separation, these gadgets are adjacent only to vertices of $A_0$. Moreover, by the observation above, one of the two vertices of $A$ adjacent to all three vertices of $T$ lies in the interior of $T$. Since the triangles $T_1,\dots,T_m$ are maximal inside $T$, this vertex belongs to $A_0$. In particular, $|A_0|\ge 1$.

As in the base case, considering the planar bipartite graph induced by $A_0$ and these gadgets, we obtain
\[
1+x+y \le |A_0|.
\]

Every gadget contained in $T$ is either $T$ itself, one of these $x+y$ gadgets, or lies inside some $T_i$. Hence
\[
g(T)=1+x+y+\sum_{i=1}^m g(T_i).
\]
By the induction hypothesis,
\[
g(T_i)\le |A(T_i)|,
\]
so
\[
g(T)\le 1+x+y+\sum_{i=1}^m |A(T_i)|
\le |A_0|+\sum_{i=1}^m |A(T_i)|
= |A(T)|.
\]

This proves \eqref{eq:main}.

\medskip

Finally, let $T_1,\dots,T_r$ be the maximal triangle gadgets in $G_1$, whose interiors are pairwise disjoint. Let $A_0$ be the set of vertices of $A$ outside all of them, and let $x,y$ be the numbers of gadgets of type \textbf{(I)} and \textbf{(II)} outside all $T_i$. As before, these satisfy
\[
x+y \le |A_0|.
\]

Every gadget is either one of these $x+y$ outer gadgets or lies inside some $T_i$, so
\[
k+1 = x+y+\sum_{i=1}^r g(T_i).
\]
By the inequalities above,
\[
k+1
\le x+y+\sum_{i=1}^r |A(T_i)|
\le |A_0|+\sum_{i=1}^r |A(T_i)|
= |A|
= k,
\]
a contradiction.

Therefore $G_1$ is not planar.
\end{proof}

We will use \Cref{lemma_gadgets} as follows. Note that the subcase of having $4$ disjoint paths between $N(x)$ and~$A(G)$ in the Kempe-contractible cases of \Cref{lem:controlled}\,c) readily yields a minor $G_1$ of $G$ containing one of the gadget types of \Cref{lemma_gadgets} -- type \textbf{(I)}.
In this specific application, the set $A$ is the Gallai--Edmonds set $A(G_x)$ of a particular unmatched vertex $x$, 
while the vertex $b\in B$ is the contraction of the component of $G[D(G_x)]$ containing~$z$.

Analogously, for any unmatched vertex $x$ and each of the difficult configurations of \Cref{lem:controlled}\,c) around~$x$, we will either be able to locally find a possibly another matching and a coloring of the contracted graph satisfying also the condition of \Cref{lem:controlled}\,b),
or we will show that \emph{each} of the factor-critical components of $G[D(G_x)]$ yields (as a minor on the $B$-side) one of the three gadgets of \Cref{lemma_gadgets}.
Since, by \Cref{clm:Gx_structure}, there are $|A(G_x)|+1=k+1$ such components, and hence gadgets, \Cref{lemma_gadgets} would then contradict planarity of~$G_x$.

\begin{toappendix}

A handful of upcoming claims will help us with finding the gadgets of \Cref{lemma_gadgets}.

\begin{lemma}
\label{lemma_6deg_neighbors}
Let $G$ be a simple planar graph and let $H \subseteq G$ be a nonempty induced subgraph. Put
\[
X := N_G(V(H)) \setminus V(H).
\]
If
\[
\frac{1}{|V(H)|} \sum_{v \in V(H)} d_G(v) \ge 6,
\]
then $|X| \ge 4$.
\end{lemma}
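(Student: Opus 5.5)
We argue by contradiction: suppose $|X|\le 3$, and consider the planar graph $G' := G[V(H)\cup X]$. Every neighbor of a vertex of $V(H)$ lies in $V(H)\cup X$, so $d_{G'}(v)=d_G(v)$ for all $v\in V(H)$. Write $n:=|V(H)|$ and $m:=|X|\le 3$. By the hypothesis,
\[
2e(H)+e(V(H),X)=\sum_{v\in V(H)} d_G(v)\ge 6n .
\]
The plan is to bound the left-hand side strictly below $6n$ using planarity of $G'$ together with $m\le 3$.

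We start with the crude planar bound. If $n+m\ge 3$, then $e(G')\le 3(n+m)-6$, and
\[
2e(H)+e(V(H),X)\le 2e(G')-2e(G'[X])\le 6(n+m)-12-2e(G'[X]).
\]
For $m\le 2$ this gives at most $6n-12+6m\le 6n$. Equality forces $m=2$ and additional tightness, and we rule this out directly: with $m\le 2$ we in fact have $e(V(H),X)\le mn$ and $e(H)\le 3n-6$, so the sum is at most $6n-12+2n$. That bound is too weak on its own, so we instead use the $K_{3,3}$-type bound below.

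The cleaner route, and the one we commit to, is to add edges. Enlarge $G'[X]$ to a triangle on $X$ (padding $X$ with dummy isolated vertices if $m<3$, which only helps). This cannot break planarity: we may assume the vertices of $X$ lie on a common face of the embedding of $G'-E(X)$ after contracting the connected graph $H$? That is not automatic, so instead we count directly. In a planar graph on $n+m$ vertices, the edges not inside $X$ number at most $3(n+m)-6-e(X)$. We then need $e(X)$ to be large, and this is the main obstacle.

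The fix is to use the degree sum more carefully. Every vertex of $H$ that has a neighbor in $X$ contributes at most $m\le 3$ edges there. So we bound $e(V(H),X)\le$ the number of edges of the planar bipartite-like graph between $V(H)$ and $X$, which is at most $2(n+m)-4$ when $m\ge 2$, and at most $n$ when $m=1$. We combine this with the fact that the graph obtained from $G'$ by contracting $H$ (which we may assume connected by working componentwise, since the average-degree hypothesis passes to some component of $H$) is planar. This forces a face structure in which $X$ is seen from a single face of the rest.

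Concretely, we contract $H$ to one vertex $h$ and take $G'[X]+h$. Since $H$ is connected and each $x\in X$ is adjacent to $H$, we can reverse this: in the embedding of $G'$, the vertices of $X$ all lie on the outer face of $H$. We may therefore add a new vertex adjacent to all of $X$ in the outer face and triangulate to make $X\cup\{$new$\}$ induce $K_4$. The resulting graph is planar on $n+m+1$ vertices with at least $e(G')-e(G'[X])+\binom{m}{2}+m$ edges. Applying $e\le 3(n+m+1)-6$ gives
\[
2e(H)+e(V(H),X)\le 6(n+m+1)-12-2\binom{m}{2}-2m-e(V(H),X),
\]
and then using $e(V(H),X)\ge 1$ should yield a value strictly below $6n$ for $m\le 3$ (for $m=3$: $6n+24-12-6-6-1<6n$). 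This contradicts the hypothesis, so $|X|\ge 4$.

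I expect the justification that $X$ can be placed on one face, via connectivity of $H$, to be the delicate step.
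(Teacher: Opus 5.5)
Your proof has a genuine gap at the step you flagged yourself, and that step is in fact false. Connectivity of $H$ does not place all vertices of $X$ in a common face of the embedding. So you cannot, in general, add a triangle on $X$ together with a new vertex adjacent to all of $X$ and keep planarity.

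Concretely, let $H=K_4$ and put $x_1,x_2,x_3$ into three distinct faces of $K_4$, each joined to the three vertices of its face. Then $G'$ is planar, $H$ is connected and induced, and $X=\{x_1,x_2,x_3\}$. We have $n=4$, $e(H)=6$ and $e(V(H),X)=9$. Your augmented graph would have $8$ vertices and $6+9+3+3=21>3\cdot 8-6$ edges, so it is not planar.

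Equivalently, the inequality you extract from the augmentation (for $m=3$ it reads $e(H)+e(V(H),X)\le 3n$) is false here, since $15>12$. More generally, stacking three degree-$3$ vertices into distinct faces of any triangulation $H$ on $n\ge 4$ vertices gives $e(H)+e(V(H),X)=3n+3$. Hence your final contradiction is not established. You also never treat $n\le 2$, where the planar edge bound for $H$ is unavailable.

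The missing idea is much simpler than an augmentation. Split
\[
2e(H)+e(V(H),X)=e(H)+\bigl(e(H)+e(V(H),X)\bigr)
\]
and bound the two summands by two separate applications of the Euler bound.

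For $n\ge 3$:
\begin{itemize}
\item $e(H)\le 3n-6$, because $H$ itself is planar;
\item $e(H)+e(V(H),X)\le e(G')\le 3(n+m)-6$.
\end{itemize}
Summing gives $2e(H)+e(V(H),X)\le 6n+3m-12\le 6n-3<6n$ whenever $m\le 3$, which is the desired contradiction. Your ``crude bound'' used only the planarity of $G'$ and so lost the $-6$ coming from $H$; that is exactly why it stalled at $m=3$.

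The small cases are handled separately, as in the paper:
\begin{itemize}
\item For $n=1$: $e(V(H),X)\ge 6$ and each vertex of $X$ contributes one such edge, so $|X|\ge 6$.
\item For $n=2$: $e(V(H),X)\ge 10$ and each vertex of $X$ contributes at most two such edges, so $|X|\ge 5$.
\end{itemize}
The reduction to connected $H$ is valid but unnecessary.
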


\begin{proof}
Let $n := |V(H)|$, $k := |X|$, and let $m := |E(H)|$. Let $c$ be the number of edges of $G$ joining a vertex of $H$ to a vertex of $X$.

Since $H$ is induced, every edge of $G$ with at least one endpoint in $V(H)$ is either an edge of $H$ or an edge joining $H$ to $X$. Hence
\[
\sum_{v \in V(H)} d_G(v) = 2m + c.
\]
By assumption,
\[
2m + c \ge 6n. \tag{1}
\]

We first dispose of the cases $n \le 2$.

If $n=1$, then $m=0$, so (1) gives $c\ge 6$. Since every edge counted by $c$ joins the unique vertex of $H$ to a distinct vertex of $X$, we have $|X| \ge 6$.

If $n=2$, then $m\le 1$, so (1) gives
\[
c \ge 6n - 2m \ge 12 - 2 = 10.
\]
Since $G$ is simple, each vertex of $X$ is adjacent to at most two vertices of $H$, and therefore
\[
c \le 2|X|.
\]
Hence $|X| \ge 5$.

Thus we may assume from now on that $n \ge 3$.

Now consider the planar graph
\[
F := G[V(H) \cup X].
\]
It has $n+k$ vertices, and its edges consist of the $m$ edges of $H$, the $c$ edges between $H$ and $X$, and possibly some edges inside $X$. Therefore
\[
|E(F)| \ge m+c.
\]
Since $F$ is simple and planar,
\[
|E(F)| \le 3(n+k)-6,
\]
and hence
\[
m+c \le 3(n+k)-6. \tag{2}
\]

Suppose for contradiction that $k \le 3$. Then (2) yields
\[
m+c \le 3n+3. \tag{3}
\]
Combining (1) and (3), we obtain
\[
6n \le 2m+c \le m+(m+c) \le m+(3n+3),
\]
so
\[
m \ge 3n-3.
\]
But $H$ is itself a simple planar graph on at least three vertices, so
\[
m \le 3n-6,
\]
a contradiction.

Therefore $k \ge 4$, that is, $|X| \ge 4$.
\end{proof}

\begin{corollary}
\label{lem:avgdeg_to_four_neighbors}
Let $G$ be a planar graph, let $M$ be a maximum matching of $G$, let $x$ be an unmatched vertex, and consider $G_x$. 
Let $D_0$ be a connected component of $G[D(G_x)]$ with $n:=|V(D_0)|\ge 3$, and let
\[
t := |N_{G_x}(D_0)\cap A(G_x)|.
\]
If
\[
\sum_{v\in V(D_0)} \deg_{G_x}(v) \;\ge\; 6n,
\]
then $t \ge 4$.
\end{corollary}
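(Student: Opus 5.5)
This is a direct application of \Cref{lemma_6deg_neighbors} with $H:=D_0$ and the ambient planar graph taken to be $G_x$ (an induced subgraph of the planar graph $G$, hence planar and simple). Since $D_0$ is a connected component of $G[D(G_x)]$, it is an induced subgraph of $G_x$. It is nonempty, so the hypotheses on $H$ hold. The assumed inequality $\sum_{v\in V(D_0)}\deg_{G_x}(v)\ge 6n$ is exactly the average-degree hypothesis $\frac1n\sum_{v\in V(H)} d_{G_x}(v)\ge 6$. The lemma therefore gives $|X|\ge 4$, where $X:=N_{G_x}(V(D_0))\setminus V(D_0)$.

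It remains to identify $X$ with $N_{G_x}(D_0)\cap A(G_x)$. By \Cref{clm:Gx_structure} we have $C(G_x)=\varnothing$, so $V(G_x)=D(G_x)\cup A(G_x)$. Any vertex of $X$ lying in $D(G_x)$ would be adjacent to $D_0$ while belonging to $G[D(G_x)]$. It would then lie in the same connected component $D_0$, which contradicts $X\cap V(D_0)=\varnothing$. Hence $X\subseteq A(G_x)$.

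Conversely, a vertex of $N_{G_x}(D_0)\cap A(G_x)$ is not in $D(G_x)\supseteq V(D_0)$, so it lies in $X$. Thus $X=N_{G_x}(D_0)\cap A(G_x)$ and $t=|X|\ge 4$. (The hypothesis $n\ge 3$ is not even needed, since the lemma covers $n\le 2$ as well.)

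There is no real obstacle here. The only points to check are that the degrees are taken in $G_x$ rather than $G$, which is harmless because the lemma is applied to the planar graph $G_x$ itself, and the component argument excluding $D$-vertices from the external neighbourhood.
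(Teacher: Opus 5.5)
Your proposal is correct and follows the paper's proof: apply \Cref{lemma_6deg_neighbors} to the planar graph $G_x$ with $H=D_0$, then identify $X$ with $N_{G_x}(D_0)\cap A(G_x)$ using $C(G_x)=\varnothing$. Your component argument that no vertex of $X$ can lie in $D(G_x)$ spells out a step the paper only asserts, and you are right that the hypothesis $n\ge 3$ is not needed, since the lemma also covers $n\le 2$.
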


\begin{proof}
Apply \Cref{lemma_6deg_neighbors} to the graph $G_x$ and the induced subgraph $H := G_x[D_0]$.

Since $G_x$ is planar and $D_0$ is an induced subgraph of $G_x$, the assumptions of \Cref{lemma_6deg_neighbors} are satisfied. The set
\[
X := N_{G_x}(V(H)) \setminus V(H)
\]
is exactly $N_{G_x}(D_0)\cap A(G_x)$, because by \Cref{lem:DGx_equals_Splus} all vertices outside $D(G_x)$ belong to $A(G_x)$.

Thus $|X| = t$, and the conclusion $t \ge 4$ follows.
\end{proof}

If we show that a factor-critical component has average degree at least $6$, or that it contains a minor with average degree at least $6$, then by \Cref{lemma_6deg_neighbors} this component has at least four distinct neighbors in $A(G_x)$. In that case, after contracting the component to a single vertex, we obtain a gadget of type \textbf{(I)} from \Cref{lemma_gadgets}.

The following observation will also be useful.

\begin{observation}
\label{obs:neighbors_of_x}
Let $G$ be a planar graph, let $M$ be a maximum matching of $G$, and let $x\in V(G)$ be unmatched. Let $u\in N(x)$, and let $v$ be the vertex matched with $u$ in $M$.

\begin{enumerate}
    \item If $v\in N(x)$, then both $u$ and $v$ belong to $S(x)$, and lie in the same connected component of $G[D(G_x)]$ as $x$.
    
    \item If $v\notin N(x)$, then $u$ is either in $S(x)$ (and hence in $D(G_x)$), or $u\in A(G)$ (and therefore $u\in A(G_x)$).
\end{enumerate}
\end{observation}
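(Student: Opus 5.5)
For part 1, I would take the alternating paths $x,u,v$ and $x,v,u$. Each starts at the unmatched vertex $x$ with the non-matching edge $xu$ (respectively $xv$) and ends with the matching edge $uv$. So both are even $M$-alternating paths ending in an $M$-edge, which gives $u,v\in S(x)=D(G_x)$ by \Cref{clm:Gx_structure}. Both are also adjacent to $x\in S(x)$, so $x,u,v$ all lie in one connected component of $G[S(x)]=G[D(G_x)]$. Alternatively, \Cref{lem:alternating_components} applies directly, since these paths lie inside $S(x)$.

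For part 2, suppose $u\notin S(x)$. Since $u\in N(x)\subseteq N_G(S(x))$, we have $u\in V(G_x)\setminus S(x)=A(G_x)$ by \Cref{clm:Gx_structure} (\Cref{lem:DGx_equals_Splus}). Then \Cref{lem:AGx_in_AG} gives $u\in A(G)$.

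The hypothesis $v\notin N(x)$ is not actually needed for this dichotomy; it only separates the two cases. Membership $u\in A(G)$ could also be argued directly: $u$ is adjacent to $x\in D(G)$, so $u\notin C(G)$. And $u\notin D(G)$, because $V(G_x)\cap D(G)=S(x)$ by \Cref{lem:V_Gx_cap_DG}.

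I expect no real obstacle here. The only care needed is the direction of alternation at the endpoints: the first edge is not in $M$ because $x$ is unmatched, and the last edge is $uv\in M$, as the definition of switchable requires.
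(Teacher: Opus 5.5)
Your proposal is correct and matches the paper's proof. Part~1 is identical; in part~2 the paper argues exactly as in your ``direct'' alternative ($u\notin D(G)$ by \Cref{lem:V_Gx_cap_DG}, $u\notin C(G)$ since $x\in D(G)$, then $u\in A(G_x)$ by \Cref{lem:DGx_equals_Splus}), and your main route only repackages that same reasoning through \Cref{lem:AGx_in_AG}; your remark that the hypothesis $v\notin N(x)$ is not needed for the dichotomy is also right.
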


\begin{proof}
Let $u\in N(x)$, and let $v$ be its partner in $M$.

\smallskip
\noindent\emph{(1)} Suppose $v\in N(x)$. Then $uv\in M$, and the paths
\[
x,u,v
\quad\text{and}\quad
x,v,u
\]
are even $M$-alternating paths whose last edges lie in $M$. Hence both $u$ and $v$ belong to $S(x)$.

Since $u$ and $v$ are adjacent to $x$, all three vertices lie in $S(x)=D(G_x)$ and are connected in $G[D(G_x)]$. Thus $u$ and $v$ lie in the same connected component of $G[D(G_x)]$ as $x$.

\smallskip
\noindent\emph{(2)} Suppose $v\notin N(x)$. If $u\in S(x)$, there is nothing to prove. Otherwise, $u\notin S(x)$.

Since $u\in V(G_x)$, \Cref{lem:V_Gx_cap_DG} implies that $u\notin D(G)$. Hence $u$ is either in $A(G)$ or $C(G)$. But $u$ has a neighbor $x\in D(G)$, and there are no edges between $C(G)$ and $D(G)$. Therefore $u\in A(G)$.

Finally, since $u\in N(S(x))\setminus S(x)$, Lemma~\ref{lem:DGx_equals_Splus} gives $u\in A(G_x)$.
\end{proof}

We will also use the following simple reduction.

\begin{observation}
\label{obs:handles}
Let $G$ be a graph and let $a_1,a_2 \in V(G)$ be vertices of degree $2$ such that
\[
N(a_1)=\{v,a_2\}
\quad\text{and}\quad
N(a_2)=\{v,a_1\}.
\]
that is, $a_1$ and $a_2$ are adjacent and share the same second neighbor $v$. Suppose moreover that both $a_1$ and $a_2$ are inessential.

Let $G' := G - \{a_1,a_2\}$. If $G'$ admits a conflict-free coloring with at most $4$ colors, then so does $G$.
\end{observation}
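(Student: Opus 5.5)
Take a CFON $4$-coloring $c$ of $G'$ and extend it to $a_1,a_2$. Write $N'(v)=N(v)\setminus\{a_1,a_2\}$. Three things must hold in $G$. First, $a_1$ and $a_2$ need a unique color. Second, $v$ needs a unique color among $N'(v)\cup\{a_1,a_2\}$. Third, no other vertex is affected, since the only neighbor of $a_1,a_2$ outside $\{a_1,a_2\}$ is $v$. The first condition holds automatically: $N(a_1)=\{v,a_2\}$ has two vertices, so it suffices that $c(a_2)\neq c(v)$, and symmetrically $c(a_1)\neq c(v)$.

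For $v$, I would give $a_1$ and $a_2$ the \emph{same} color $\beta\neq c(v)$. Then $\beta$ occurs at least twice in $N(v)$, so it is never unique there. The only danger is that $\beta$ destroys the unique color that $v$ had in $G'$.

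I would split into cases on $N'(v)$.

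\emph{Case $N'(v)\neq\varnothing$.} Since $c$ is CFON on $G'$, some color $\alpha$ is unique in $N'(v)$. Choose $\beta\notin\{\alpha,c(v)\}$; this is possible with $4$ colors. Then $\alpha$ is still unique in $N(v)$, because $a_1,a_2$ do not carry $\alpha$. Also $a_1$ and $a_2$ each see $v$ and the other $a_i$ with the distinct colors $c(v)$ and $\beta$. So every vertex is satisfied.

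\emph{Case $N'(v)=\varnothing$.} Here $v$ is isolated in $G'$ and its color is free. Now $G[\{v,a_1,a_2\}]$ is a triangle forming a connected component of $G$. Color it with three distinct colors, so that each vertex sees two distinct colors.

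The inessentiality hypothesis does not seem to be needed for this argument; it presumably matters only for how the reduction is used later in the paper. I expect no real obstacle. The only point to check is that $v$'s own color is irrelevant to $v$'s condition (open neighborhoods), while it is relevant to $a_1$ and $a_2$, which is why $\beta\neq c(v)$ is required. If $v$ is isolated in $G'$, I recolor $v$ as needed, which does not affect the CFON property of $G'-v$.
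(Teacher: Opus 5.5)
Your proof is correct and follows the paper's approach: extend a CFON coloring of $G'$ by giving $a_1,a_2$ colors that avoid both $c(v)$ and the color $\alpha$ that is unique for $v$. The differences are minor. The paper gives $a_1$ and $a_2$ two distinct such colors, which in general needs all four colors, whereas your single shared color $\beta$ needs only three. You also explicitly treat the case $N_{G'}(v)=\varnothing$, where no $\alpha$ exists; the paper's proof passes over this case, although its distinct-colors choice would cover it anyway.
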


\begin{proof}
Let $c'$ be a conflict-free coloring of $G'$ using at most $4$ colors. We extend $c'$ to a coloring $c$ of $G$.

The vertices $a_1$ and $a_2$ are adjacent only to each other and to $v$. Thus their neighborhoods are identical, and consist of $\{v,a_2\}$ and $\{v,a_1\}$, respectively.

Consider the colors appearing in $N(v)$ under $c'$. Let $\alpha$ be a color that appears exactly once in $N(v)$ (which exists since $c'$ is conflict-free). The vertices $a_1$ and $a_2$ must avoid two colors:
\begin{itemize}
    \item the color $c'(v)$, to preserve properness,
    \item the unique color $\alpha$, so as not to destroy the conflict-free witness of $v$.
\end{itemize}
Thus at most two colors are forbidden for $a_1$ and $a_2$, and since we use at most four colors, there exist two distinct available colors for $a_1$ and $a_2$. Assign these two colors to $a_1$ and $a_2$.

It remains to check the conflict-free condition. The vertices $a_1$ and $a_2$ each see two neighbors with different colors, so one of them appears exactly once in their neighborhood. The vertex $v$ retains its original unique color witness, since neither $a_1$ nor $a_2$ was assigned that color. All other vertices are unaffected.

Thus $c$ is a conflict-free coloring of $G$ using at most $4$ colors.
\end{proof}

We refer to such a configuration as described above as a \emph{handle}. 
By \Cref{obs:handles}, handles can be removed without affecting the existence of a $4$-conflict-free coloring: we may iteratively delete all handles from the graph, solve the problem on the resulting graph, and then reinsert the handles and extend the coloring as described above.

Therefore, from this point on, we may assume without loss of generality that the graph contains no handles.

The following lemma is motivated by our treatment of the two remaining difficult cases. The local analysis will show the following. Whenever the current component contains a vertex of degree $2$ or $4$ which is not immediately resolved by a special lift, and cannot be reduced to a contractible case by switching the matching, then either one of the desired gadgets has already appeared, or the closed neighborhood of this vertex can be contracted safely. Here safely means that the contraction preserves planarity, does not decrease the degree of any vertex represented by the component (including the ones being contracted), and creates a new vertex of degree at least $6$.

Once this local statement is available, the rest of the argument is iterative. We repeatedly contract such safe closed neighborhoods. Since each contraction removes at least one low-degree vertex and creates only vertices of degree at least $6$, the process eventually produces a connected component of minimum degree at least $6$. The planar density lemma then forces this component to have at least four distinct neighbors in $A$, giving the desired contracted gadget.

\begin{lemma}
\label{lem:reduce_low_degree_component}
Let $G$ be a plane graph with vertex partition $V(G)=A\cup B$, where
$G[B]$ is connected. Suppose that every vertex of $B$ has degree $2$, $4$,
or at least $6$, and that every vertex $v\in B$ of degree $2$ or $4$ satisfies $N_G(v)\subseteq B$.

Assume that $G$ is handle-free,

Assume moreover that every vertex $v\in B$ of degree $2$ or $4$ is reducible
in the following sense:
\begin{enumerate}
\item there is no path in $G[B]-v$ between two distinct vertices of
$N_G(v)$;
\item identifying $N_G[v]$ to a single vertex preserves planarity and does
not decrease the degree of any vertex of $B$, including the vertices being
contracted; that is, the resulting vertex has degree at least the maximum
degree of the vertices in $N_G[v]$.
\item The resulting vertex has degree at least $6$.
\end{enumerate}

Finally, assume the following local overlap condition. Whenever two adjacent
vertices $u,v\in B$ both have degree $2$ or $4$, they have an odd number of
common neighbors in $B$; equivalently, $|N_G(u)\cap N_G(v)\cap B|\in \{1,3\}$.

Then one of the following holds:
\begin{enumerate}
\item $B$ can be contracted to a single vertex adjacent to at least four
distinct vertices of $A$ (gadgets~\textbf{(I)}); or
\item $G$ contains a contraction to a copy of $K_{2,3}$ with bipartition
$(X,Y)$, where $X\subseteq B$, $|X|=2$, and $Y\subseteq A$, $|Y|=3$. (gadgets~\textbf{(II)})
\end{enumerate}
\end{lemma}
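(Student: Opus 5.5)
We argue by induction on the number of vertices of $B$ of degree $2$ or $4$ (the \emph{low} vertices). If $B$ has no low vertex, every vertex of $B$ has degree at least $6$ in $G$. Then we consider two cases.

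If $|B|\ge 3$, we apply \Cref{lemma_6deg_neighbors} to $G$ and $H=G[B]$. This shows $B$ has at least four distinct neighbors outside $B$, all in $A$. Contracting the connected set $B$ to one vertex then gives gadget~\textbf{(I)}.

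If $|B|\le 2$, the counting in the proof of \Cref{lemma_6deg_neighbors} already gives $|N(B)\setminus B|\ge 5$. So gadget~\textbf{(I)} is again obtained.

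For the inductive step, pick a low vertex $v\in B$ and form $G'$ by identifying $N_G[v]$ into a single vertex $w$. By reducibility~(2), $G'$ is plane, $G'[B']$ is connected (with $B'=(B\setminus N_G[v])\cup\{w\}$), and no vertex of $B'$ loses degree. By reducibility~(3), $\deg_{G'}(w)\ge 6$. We then verify the hypotheses for $G'$.

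Degrees in $B'$ are still $2$, $4$ or at least $6$. This needs care: degrees of vertices adjacent to $N_G[v]$ may \emph{drop} through parallel-edge suppression. Here condition~(1) and the overlap condition are used. Condition~(1) says the neighbors of $v$ are separated in $G[B]-v$, so a vertex outside $N_G[v]$ can have at most one neighbor in $N_G(v)$ unless it lies in a common triangle structure. The parity condition $|N(u)\cap N(v)\cap B|\in\{1,3\}$ then controls how many parallel edges collapse, so degree parities are preserved and a low vertex does not become degree $3$ or $5$. The no-decrease clause of~(2) handles the remaining cases.

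Low vertices of $G'$ are a subset of the old low vertices not absorbed into $w$. Their neighborhoods stay inside $B'$, and reducibility, handle-freeness and the overlap condition are inherited: contractions cannot create new paths between neighbors that avoid the contracted vertex, and they cannot create handles, since $w$ has degree at least $6$. So the number of low vertices strictly decreases, and by induction $G'$ yields gadget~\textbf{(I)} or~\textbf{(II)} as a contraction. Composing the contractions gives the corresponding contraction of $G$, because contracting $N_G[v]$ is itself a contraction inside $B$ and never touches $A$.

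The main obstacle is exactly this inheritance step: showing that after identifying $N_G[v]$ every surviving low vertex still satisfies~(1)--(3) and the overlap condition in $G'$. If that fails, I would strengthen the statement instead of the induction. Contract, in one step, all closed neighborhoods of a maximal family of low vertices with pairwise disjoint closed neighborhoods. Condition~(1) together with the overlap condition makes these neighborhoods either disjoint or sharing exactly the structured triangles. After that, argue directly that no low vertex remains, and apply the density argument above once.

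Gadget~\textbf{(II)} enters where the final contracted vertex has only three distinct $A$-neighbors despite large degree. Then planarity forces a second contracted piece, obtained by splitting $B$ along a vertex of degree at least $6$, that is adjacent to the same three vertices of $A$. This yields a $K_{2,3}$ with two vertices on the $B$-side and three on the $A$-side.
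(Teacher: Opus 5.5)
\textbf{Verdict: there is a genuine gap in the inductive step.} Your overall plan matches the paper's: contract closed neighborhoods of low vertices inside $B$ one at a time without touching $A$, until a component of minimum degree at least $6$ remains, then apply \Cref{lemma_6deg_neighbors} to get gadget~\textbf{(I)}. Your base case is fine; the split $|B|\le 2$ is unnecessary, since \Cref{lemma_6deg_neighbors} already covers $n\le 2$.

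You flag the inductive step as the main obstacle but do not close it. It is not a technicality: it is where almost all of the paper's work happens, and where gadget~\textbf{(II)} comes from.

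\textbf{Why hypotheses (2)--(3) do not transfer.} They speak only about identifying $N_G[u]$ in the original graph $G$. After $N_G[v]$ has become $w$, a surviving low vertex $u$ at distance $2$ from $v$ is adjacent to $w$. Contracting $N_{G'}[u]$ therefore means contracting $N_G[u]\cup N_G[v]$ in $G$, and nothing is assumed about that set. Condition (1) does transfer: lift a path of $G'[B']-u$ and reroute it through the connected set $N_G[v]$. It rules out degree loss through $B$-vertices. However, nothing in your argument controls loss through vertices of $A$ that are common neighbors of $w$ and the other high-degree neighbors of $u$.

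Your parity paragraph addresses a non-issue. In the first step, (2) already forces every surviving vertex to keep its degree exactly, because identification can never increase the degree of a vertex outside $N_G[v]$. Parity is also not preserved for vertices of degree $\ge 6$ anyway.

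\textbf{What the paper does instead.} It does not try to re-verify the lemma's hypotheses for $G'$. It keeps the invariant ``no contraction so far decreased a degree'' and proves directly that the next contraction is safe unless a gadget appears. After using (1) to exclude losses inside $B$, the only possible loss comes from $A$-vertices adjacent to several (necessarily high-degree) vertices of the current neighborhood. Then one of three things holds:
\begin{itemize}
\item the merged set already has four $A$-neighbors, giving gadget~\textbf{(I)};
\item two high-degree vertices share three $A$-neighbors, giving gadget~\textbf{(II)};
\item a case analysis on how many neighbors of the low vertex have degree $\ge 6$, using the overlap condition and a $K_5$/face-separation argument, shows the contraction is safe.
\end{itemize}

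\textbf{Problems with your last two paragraphs.} You place gadget~\textbf{(II)} at the end, which contradicts your own base case: a final component of minimum degree $\ge 6$ always has at least four $A$-neighbors by \Cref{lemma_6deg_neighbors}. Your fallback of contracting a maximal family of disjoint closed neighborhoods also fails. A low vertex at distance exactly $2$ from a chosen one is not absorbed, so low vertices generally remain and you face the same inheritance problem.
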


\begin{proof}
We contract configurations centered at degree-$2$ and degree-$4$ vertices of $B$. Whenever a set of vertices of $B$ is contracted, the resulting vertex is again regarded as a vertex of $B$. Thus vertices of $A$ are never contracted.

We proceed inductively. Suppose that after $k$ contractions no desired gadget has been found, and assume that no contraction performed so far has decreased the degree of any vertex in the current image of $G[B]$. We prove that either a desired gadget appears, or there is another contraction centered at a degree-$2$ or degree-$4$ vertex which again does not decrease the degree of any vertex.

For $k=0$, the assumption is vacuous, and the existence of the first contraction follows from the reducibility assumption.

Now assume that $k\ge 0$ and let $C$ be the current image of $G[B]$. By the induction assumption, all previous contractions preserved the degrees of vertices of $C$.

We also note that every vertex of $C$ of degree $2$ or $4$ has all its neighbors in $C$. Indeed, every vertex created by a previous contraction has degree at least $6$, since by the induction assumption no degree has decreased. Hence a current vertex of degree $2$ or $4$ was not created by a contraction. It is therefore an original vertex of $B$ of degree $2$ or $4$. By assumption, all neighbors of such a vertex originally belonged to $B$, and since all contractions are performed inside $B$, their current images all belong to $C$.

Let $v\in C$ be the next vertex of degree $2$ or $4$ to be reduced, and put
$$
N:=N_C(v).
$$
By the observation above, all vertices of $N$ lie in the current image of $B$. We consider the contraction of
$$
S:={v}\cup N.
$$

Suppose first that, in the current graph, two vertices of $N$ are adjacent although this adjacency was not present in the previous contractions (and original graph). Then this new adjacency lifts (undoing the previous contractions), in the original graph, to a path between two distinct vertices of $N_G(v)$ whose internal vertices avoid $N_G[v]$. This contradicts the reducibility assumption for $v$.

Similarly, suppose that some vertex
$$
w\in C\setminus S
$$
is adjacent in the current graph to two distinct vertices of $N$. Lifting the two corresponding adjacencies through the previous contractions gives a path in the original graph between two distinct vertices of $N_G(v)$ whose internal vertices avoid $N_G[v]$, again contradicting reducibility.

Thus no obstruction inside the current image of $B$ can make the contraction of $S$ lose degree: no new edge has appeared inside $N$, and no vertex of $C\setminus S$ sees two vertices of $N$.

Therefore, the only possible reason why contracting $S$ could fail to produce a vertex of degree at least $6$ without decreasing the degree of any vertex is that some vertices of $A$ are adjacent to several vertices of $N$. By the observation above, such vertices of $A$ can be adjacent only to vertices of $N$ whose degree is at least $6$.

Let
$$
H:={u\in N : \deg_C(u)\ge 6}.
$$
If the contracted image of $S$ has at least four distinct neighbors in $A$, then contracting $S$ gives a single vertex adjacent to at least four distinct vertices of $A$, which is the desired gadget of type~\textbf{(I)}.

We may therefore assume that the contracted image of $S$ has at most three distinct neighbors in $A$. If two vertices $p,q\in H$ have three common neighbors in $A$, say
$$
Y={y_1,y_2,y_3}\subseteq A,
$$
then $p$ and $q$, together with $Y$, give a $K_{2,3}$ contraction: we contract the branch set represented by $p$ to one vertex and the branch set represented by $q$ to another vertex. Thus we obtain the desired gadget of type~\textbf{(II)}.

It remains to consider the case where no two vertices of $H$ have three common neighbors in $A$. Equivalently, every pair of vertices of $H$ has at most two common neighbors in $A$. We show that in this case the contraction of $S$ is still safe.

First suppose that $\deg_C(v)=2$, and write
$$
N={p,q}.
$$
The only possible loss in the contraction of $S$ can come from common neighbors of $p$ and $q$ in $A$. By the assumption above, $p$ and $q$ have at most two such common neighbors. Hence, when $S$ is contracted, each of $p$ and $q$ contributes at least two neighbors outside $S$ which were not already neighbors of the other one. Thus the resulting vertex has degree at least $6$, and the degree of no vertex is decreased.

It remains to consider the case where the chosen vertex has degree $4$. Let $w$ be this vertex. We distinguish cases according to the number of neighbors of $w$ which have degree at least $6$.

If $w$ has at most one neighbor of degree at least $6$, then the contraction is safe. Indeed, only vertices of degree at least $6$ can have neighbors in $A$, and we have already ruled out any loss through vertices of $B$.

Suppose next that $w$ has exactly two neighbors of degree at least $6$. If one of the remaining neighbors of $w$ has degree $2$, then we contract the closed neighborhood of that degree-$2$ vertex instead, and the desired outcome follows from the degree-$2$ case above.

Hence we may assume that the two remaining neighbors of $w$ both have degree $4$. By the local overlap condition, each of them has either one or three neighbors in common with $w$. If both have only one common neighbor with $w$, then each of them contributes two neighbors outside $N_C[w]$. In this case, contracting $N_C[w]$ does not decrease the degree of any vertex.

It remains to consider the case where both of these degree-$4$ neighbors have three neighbors in common with $w$. Then the two neighbors of $w$ of degree at least $6$ necessarily lie on different faces; otherwise they could be joined by an edge in one face, giving us the $K_5$, which contradicts the planarity assumption. Hence each of them has at most three neighbors in $N_C[w]$, and therefore each has at least three neighbors outside $N_C[w]$. Since they lie on different faces, these outside neighbors cannot be shared. Consequently, contracting $N_C[w]$ creates a vertex of degree at least $6$ and does not decrease the degree of any vertex.

Suppose next that $w$ has exactly three neighbors of degree at least $6$. If the remaining neighbor has degree $2$, then we contract the closed neighborhood of that degree-$2$ vertex instead, and the desired outcome follows from the degree-$2$ case. Hence we may assume that the remaining neighbor has degree $4$; call it $b$.

First assume that $b$ has three neighbors in common with $w$. Then, as in the previous case, two of the degree-at-least-$6$ neighbors of $w$, say $a_1$ and $a_2$, must lie on different faces of the local configuration. In particular, they contribute distinct neighbors outside $N_C[w]$.

Let $a_3$ be the third neighbor of $w$ of degree at least $6$. The vertex $a_3$ can have at most four neighbors in $N_C[w]$. Moreover, since no two vertices of $H$ have three common neighbors in $A$, the vertex $a_3$ shares at most two neighbors in $A$ with each of $a_1$ and $a_2$.

Suppose that $a_3$ is adjacent to three vertices of $A$, all of which are also adjacent to $a_1$ or to $a_2$. Then we obtain the gadget of type~\textbf{(II)}: the vertices $a_1$ and $a_2$ can be contracted to one vertex through the path going via $w$, and together with $a_3$ and these three vertices of $A$ we obtain a contraction to $K_{2,3}$. Thus we may assume that this does not happen.

Consequently, after contracting $N_C[w]$, the possible loss in the degree of $a_3$ is compensated by the distinct neighbors contributed by $a_1$ and $a_2$. Hence the contraction of $N_C[w]$ does not decrease the degree of $a_3$. The vertices $a_1$ and $a_2$ also do not lose degree: each has at most three neighbors in $N_C[w]$, and, since they lie on different faces, they contribute distinct neighbors outside $N_C[w]$. Therefore contracting $N_C[w]$ is safe and the resulting vertex has degree at least $6$.

It remains to consider the case where $b$ has only one neighbor in common with $w$. Then $b$ contributes two neighbors outside $N_C[w]$. Each of the three degree-at-least-$6$ neighbors of $w$ has at most three neighbors in $N_C[w]$, and hence has at least three neighbors outside $N_C[w]$.

These outside neighbors cannot all be the same. Indeed, if they were all the same, then these common neighbors would have to lie in $A$, and we would obtain the gadget of type~\textbf{(II)}. Hence, after contracting $N_C[w]$, each degree-at-least-$6$ neighbor loses at most three neighbors inside $N_C[w]$, but gains the two neighbors contributed by $b$ and at least one further distinct neighbor contributed by one of the other degree-at-least-$6$ neighbors. Since any two of them share at most two neighbors in $A$, no degree is decreased. Thus the contraction of $N_C[w]$ is safe and the resulting vertex has degree at least $6$.

Finally, suppose that all neighbors of $w$ have degree at least $6$. If some of these neighbors have four neighbors in $N_C[w]$, then two of the degree-at-least-$6$ neighbors necessarily lie on different faces of the local configuration, and the same argument as above applies.

Thus we may assume that every neighbor of $w$ has at most three neighbors in $N_C[w]$. Then, when $N_C[w]$ is contracted, the degree of any neighbor of $w$ can decrease by at most $3$. On the other hand, since any two vertices of $H$ have at most two common neighbors in $A$, the other three neighbors of $w$ each contribute at least one neighbor which was not already adjacent to the given vertex. Hence no degree is decreased, and the contracted vertex has degree at least $6$.

We have shown that, unless one of the desired gadgets already appears, no loss can occur through vertices of $B$ or through vertices of $A$. Therefore the contraction of $S$ preserves the degree of every vertex in the current component and produces a vertex of degree at least $6$. This proves the induction step.

If a gadget appears during the process, we are done. Otherwise, after all degree-$2$ and degree-$4$ vertices have been contracted, the resulting component has minimum degree at least $6$: no contraction decreased any degree, and every created vertex has degree at least $6$.

By \Cref{lemma_6deg_neighbors}, this final component has at least four distinct neighbors in $A$. Contracting the whole final component to a single vertex gives a vertex adjacent to at least four vertices of $A$.

Thus, in every case, one of the two desired outcomes holds.
\end{proof}

\end{toappendix}

\section{Difficult cases: 2B and 4C}\label{sec:difficult2}

Having finished the preparatory work in \Cref{lem:controlled} and \Cref{lemma_gadgets}, we turn to a local in-depth analysis of each of the difficult cases; Case~2B and Case~4C.
The rather lengthy and technical details will be left entirely for the Appendix, and here we only briefly illustrate the proof techniques used there on the (in our view more interesting) Case~4C.

\subparagraph{Case 4C sketched.}\apxmark{apx:difficult4C}
Consider a maximum matching~$M$ and a vertex $x\in U$ as in \Cref{lem:controlled}, such that $x$ is Case~4C.
Referring to \Cref{fig:neighbors_four}, we assume that $x$ has neighbors $a_1,a_2,a_3,a_4$ in this cyclic order, such that $a_1a_2\in M$ and $a_3a_4\in M$ (observe that $a_1a_3, a_2a_4\in M$ would violate planarity).
The rough ideas of our solution are as follows.

Let $D_0$ be the connected component of $G[D(G_x)]$ containing $x$.
For start we show that if any two consecutive vertices of $a_1,a_2,a_3,a_4$ are connected by a path $P\subseteq D_0$ (which is not a single edge and which avoids $x$ and $a_1,a_2,a_3,a_4$ except at the ends), 
then we can resolve this case locally, or we contradict planarity of $G$ or the conclusions of \Cref{lem:controlled}:
\begin{itemize}
\item Since $P\subseteq D_0$, every internal vertex of $P$ is reachable from $x$ by some $M$-alternating path inside $D_0$ via one of $a_1,a_2,a_3,a_4$, and so using an edge $a_1a_2$ or $a_3a_4$.
	With help of an easy case analysis, one can use this fact to construct an $M$-alternating path $P'\subseteq D_0$ between two consecutive vertices of $a_1,a_2,a_3,a_4$ (possibly not the same two as the ends~of~$P$).
\item If the two ends of $P'$ are $a_1,a_2$ up to symmetry, see \Cref{fig:4C_path_matched}, then we can switch $M$ by the $M$-alternating cycle $P'+a_1a_2$, obtaining contractible Case~4B.
	This is in contrary to \Cref{lem:controlled}\,c) (in other words, we would directly resolve Case~4B instead).
\item If, up to symmetry, the two ends of $P'$ are $a_2,a_3$ and $a_1a_4\in E(G)$, then the same solution is obtained
	by switching the $M$-alternating cycle $P'+a_1a_2+a_1a_4+a_3a_4$.
\item If the two ends of $P'$ are $a_2,a_3$ (but $a_1a_4\not\in E(G)$), see \Cref{fig:4C_path_non_matched}, then we choose a set $F\subseteq E(G)$ from switched $M\triangle E(P')$ by adding the two edges $xa_1$ and $xa_4$,
	and define $H'$ as the contracted graph $H(G,F)$ with added edge $a_2a_3$ (which preserves planarity since $a_2$,$a_3$ are consecutive around~$x$).
	As a result, any proper $4$-coloring of $H'$ assigns the supervertex $[a_1xa_4]$ one color, say $1$, and two other colors, say $2$ and $3$, to the supervertices of $a_2$ and $a_3$.
	Then, in the lifted coloring of $G$, $x$ sees a unique color $2$ in $a_2$, and both $a_1$ and $a_4$ see a unique color $1$ in~$x$ (since $a_1a_4\not\in E(G)$).
	The rest follows from \Cref{lem:controlled}.
\end{itemize}

Otherwise, we examine possible vertex degrees in~$D_0$.
Since each vertex of $V(D_0)\subseteq S(x)$ (which in this case include $a_1,a_2,a_3,a_4$) is switchable from $x$, 
\Cref{lem:controlled} implies that every vertex of $D_0$ has degree $2$, $4$ or at least~$6$, or it is in Case~5B with $4$ disjoint paths to $A(G)$.
We then apply an inductive edge-density argument within $D_0$ which shows that there is a path $P''\subseteq D_0$ between some two of $a_1,a_2,a_3,a_4$,
or it eventually yields a gadget of \Cref{lemma_gadgets},

\begin{figure}[t]
        \centering
        \begin{subfigure}[b]{0.45\textwidth}
            \centering
        \begin{tikzpicture}[every node/.style={circle, draw, fill=black, inner sep=1pt}, node distance=1cm]
            \node (t1) at (0,0) {};
            \node (l1) at ($(t1) + (-2.2,-1)$) {};
            \node (r1) at ($(t1) + (2.2,-1)$) {};
            \node (rm1) at ($(t1) + (1.0,-1)$) {};
            \node (lm1) at ($(t1) + (-1.0,-1)$) {};
            \node (l2) at ($(l1) + (-0,-1)$) {};
            \node (lm2) at ($(lm1) + (0,-1)$) {};
            \draw (t1) -- (l1);
            \draw (t1) -- (r1);
            \draw (t1) -- (lm1);
            \draw (t1) -- (rm1);
            \draw (l2) -- (l1) -- (lm1) -- (lm2);
            \draw (rm1) -- (r1);

            \node[draw=blue, ellipse, fit=(l1) (lm1), inner sep=4pt, fill=none, transform shape, scale=0.80] {};
            \node[draw=blue, ellipse, fit=(r1) (rm1), inner sep=4pt, fill=none, transform shape, scale=0.80] {};

            \node[draw=none, fill=none, anchor=west] at ($(t1) + (0.15, 0.1)$) {$x$};
            \node[draw=none, fill=none, anchor=east] at ($(l1) + (-0.25, -0.25)$) {$a_1$};
            \node[draw=none, fill=none, anchor=west] at ($(r1) + (0.25, -0.25)$) {$a_4$};
            \node[draw=none, fill=none, anchor=west] at ($(lm1) + (0.25, -0.25)$) {$a_2$};
            \node[draw=none, fill=none, anchor=east] at ($(rm1) + (-0.25, -0.25)$) {$a_3$};

            \draw[decorate, decoration={zigzag}]
            (l2) .. controls (-2.3,-3.3) and (-0.9,-3.3) .. (lm2);

            \node[draw=none, fill=none, shape=rectangle] at (0,-3.5) {\footnotesize Before switching along the zigzag path};
        \end{tikzpicture}
        \end{subfigure}
        \hfill
        \begin{subfigure}[b]{0.45\textwidth}
            \centering
        \begin{tikzpicture}[every node/.style={circle, draw, fill=black, inner sep=1pt}, node distance=1cm]
            \node (t1) at (0,0) {};
            \node (l1) at ($(t1) + (-2.2,-1)$) {};
            \node (r1) at ($(t1) + (2.2,-1)$) {};
            \node (rm1) at ($(t1) + (1.0,-1)$) {};
            \node (lm1) at ($(t1) + (-1.0,-1)$) {};
            \node (l2) at ($(l1) + (-0,-1)$) {};
            \node (lm2) at ($(lm1) + (0,-1)$) {};
            \draw (t1) -- (l1);
            \draw (t1) -- (r1);
            \draw (t1) -- (lm1);
            \draw (t1) -- (rm1);
            \draw (l1) -- (lm1);
            \draw (rm1) -- (r1);
            \draw (l1) -- (l2);
            \draw (lm1) -- (lm2);

            \node[draw=blue, ellipse, fit=(l1) (l2), inner sep=4pt, fill=none, transform shape, scale=0.80] {};
            \node[draw=blue, ellipse, fit=(lm1) (lm2), inner sep=4pt, fill=none, transform shape, scale=0.80] {};
            \node[draw=blue, ellipse, fit=(r1) (rm1), inner sep=4pt, fill=none, transform shape, scale=0.80] {};

            \node[draw=none, fill=none, anchor=west] at ($(t1) + (0.15, 0.1)$) {$x$};
            \node[draw=none, fill=none, anchor=east] at ($(l1) + (-0.25, -0.25)$) {$a_1$};
            \node[draw=none, fill=none, anchor=west] at ($(r1) + (0.25, -0.25)$) {$a_4$};
            \node[draw=none, fill=none, anchor=west] at ($(lm1) + (0.25, -0.25)$) {$a_2$};
            \node[draw=none, fill=none, anchor=east] at ($(rm1) + (-0.25, -0.25)$) {$a_3$};
            \draw[decorate, decoration={zigzag}]
            (l2) .. controls (-2.3,-3.3) and (-0.9,-3.3) .. (lm2);

            \node[draw=none, fill=none, shape=rectangle] at (0,-3.5) {\footnotesize After switching along this path};
        \end{tikzpicture}
        \end{subfigure}
        \caption{An $M$-alternating path in Case 4C between two consecutive matched neighbors $a_1,a_2$.}
	\label{fig:4C_path_matched}
\end{figure}

\begin{figure}[tb]
        \centering
        \begin{subfigure}[b]{0.45\textwidth}
            \centering
        \begin{tikzpicture}[every node/.style={circle, draw, fill=black, inner sep=1pt}, node distance=1cm]
            \node (t1) at (0,0) {};
            \node (l1) at ($(t1) + (-2.2,-1)$) {};
            \node (r1) at ($(t1) + (2.2,-1)$) {};
            \node (rm1) at ($(t1) + (1.0,-1)$) {};
            \node (lm1) at ($(t1) + (-1.0,-1)$) {};
            \node (rm2) at ($(rm1) + (0,-1)$) {};
            \node (lm2) at ($(lm1) + (0,-1)$) {};
            \draw (t1) -- (l1);
            \draw (t1) -- (r1);
	    \draw[blue] ($(lm1)+(0.3,0)$)--($(rm1)-(0.3,0)$) ;
            \draw (t1) -- (lm1) -- (lm2);
            \draw (t1) -- (rm1) -- (rm2);
            \draw (l1) -- (lm1);
            \draw (rm1) -- (r1);

            \node[draw=blue, ellipse, fit=(l1) (lm1), inner sep=4pt, fill=none, transform shape, scale=0.80] {};
            \node[draw=blue, ellipse, fit=(r1) (rm1), inner sep=4pt, fill=none, transform shape, scale=0.80] {};

            \node[draw=none, fill=none, anchor=west] at ($(t1) + (0.15, 0.1)$) {$x$};
            \node[draw=none, fill=none, anchor=east] at ($(l1) + (-0.25, -0.25)$) {$a_1$};
            \node[draw=none, fill=none, anchor=west] at ($(r1) + (0.25, -0.25)$) {$a_4$};
            \node[draw=none, fill=none, anchor=west] at ($(lm1) + (0.25, -0.25)$) {$a_2$};
            \node[draw=none, fill=none, anchor=east] at ($(rm1) + (-0.25, -0.25)$) {$a_3$};
            \draw[decorate, decoration={zigzag,}]
            (lm2) .. controls (-0.9,-3.3) and (0.9,-3.3) .. (rm2);

            \node[draw=none, fill=none, shape=rectangle] at (0,-3.5) {\footnotesize Before switching along the zigzag path};
        \end{tikzpicture}
        \end{subfigure}
        \hfill
        \begin{subfigure}[b]{0.45\textwidth}
            \centering
        \begin{tikzpicture}[every node/.style={circle, draw, fill=black, inner sep=1pt}, node distance=1cm]
            \node (t1) at (0,0) {};
            \node (l1) at ($(t1) + (-2.2,-1)$) {};
            \node (r1) at ($(t1) + (2.2,-1)$) {};
            \node (rm1) at ($(t1) + (1.0,-1)$) {};
            \node (lm1) at ($(t1) + (-1.0,-1)$) {};
            \node (rm2) at ($(rm1) + (0,-1)$) {};
            \node (lm2) at ($(lm1) + (0,-1)$) {};
            \draw (t1) -- (l1);
            \draw (t1) -- (r1);
            \draw (lm1) -- (lm2);
            \draw (rm1) -- (rm2);
	    \draw[blue] ($(lm1)+(0.15,0)$)--($(rm1)-(0.15,0)$) ;
            \draw (t1) -- (lm1);
            \draw (t1) -- (rm1);
            \draw (l1) -- (lm1);
            \draw (rm1) -- (r1);

            \node[draw=blue, ellipse, fit=(lm1) (lm2), inner sep=4pt, fill=none, transform shape, scale=0.80] {};
            \node[draw=blue, ellipse, fit=(rm1) (rm2), inner sep=4pt, fill=none, transform shape, scale=0.80] {};

            \node[draw=none, fill=none, anchor=west] at ($(t1) + (0.15, 0.1)$) {$x$};
            \node[draw=none, fill=none, anchor=east] at ($(l1) + (-0.25, -0.25)$) {$a_1$};
            \node[draw=none, fill=none, anchor=west] at ($(r1) + (0.25, -0.25)$) {$a_4$};
            \node[draw=none, fill=none, anchor=west] at ($(lm1) + (0.25, -0.25)$) {$a_2$};
            \node[draw=none, fill=none, anchor=east] at ($(rm1) + (-0.25, -0.25)$) {$a_3$};
            \draw[decorate, decoration={zigzag,}]
            (lm2) .. controls (-0.9,-3.3) and (0.9,-3.3) .. (rm2);

            \node[draw=none, fill=none, shape=rectangle] at (0,-3.5) {\footnotesize After switching along this path};
        \end{tikzpicture}
        \end{subfigure}
        \caption{An $M$-alternating path $P'$ in Case 4C between two consecutive non-matched neighbors~$a_2,a_3$.
		We switch by $P'$ and contract all three vertices $a_1,x,a_4$ into one before coloring.}
	\label{fig:4C_path_non_matched}
\end{figure}

\begin{figure}[ht]
        \begin{subfigure}[b]{0.45\textwidth}
            \centering
        \begin{tikzpicture}[every node/.style={circle, draw, fill=black, inner sep=1pt}, node distance=1cm]
            \node (t1) at (0,0) {};
            \node (l1) at ($(t1) + (-2.2,-1)$) {};
            \node (r1) at ($(t1) + (2.2,-1)$) {};
            \node (rm1) at ($(t1) + (1.2,-1)$) {};
            \node (lm1) at ($(t1) + (-1.2,-1)$) {};
            \node (lm2) at ($(lm1) + (0,-1)$) {};
            \node (r2) at ($(r1) + (0,-1)$) {};
            \node (l2) at ($(l1) + (0,-1)$) {};
            \node (rm2) at ($(rm1) + (0,-1)$) {};
            \draw (t1) -- (l1);
            \draw (t1) -- (r1);
            \draw (rm1) -- (lm1);
            \draw (t1) -- (lm1);
            \draw (t1) -- (rm1);
            \draw (l1) -- (lm1);
            \draw (rm1) -- (r1);
            \draw (rm1) -- (rm1);
            \draw[dashed] (lm1) -- (lm2);
            \draw[dashed] (r1) -- (r2);

            \node[draw=blue, ellipse, fit=(l1) (lm1), inner sep=4pt, fill=none, transform shape, scale=0.80] {};
            \node[draw=blue, ellipse, fit=(r1) (rm1), inner sep=4pt, fill=none, transform shape, scale=0.80] {};

            \node[draw=none, fill=none, anchor=west] at ($(t1) + (0.15, 0.1)$) {$x$};
            \node[draw=none, fill=none, anchor=east] at ($(l1) + (-0.25, -0.25)$) {$a_1$};
            \node[draw=none, fill=none, anchor=west] at ($(r1) + (0.25, -0.25)$) {$a_4$};
            \node[draw=none, fill=none, anchor=west] at ($(lm1) + (0.25, -0.25)$) {$a_2$};
            \node[draw=none, fill=none, anchor=east] at ($(rm1) + (-0.25, -0.25)$) {$a_3$};

            \draw
              plot[smooth] coordinates {
                ($(l1)+(0.00,0.00)$)
                (-1.2, 0.75)
                (1.2,0.75)
                ($(r1)+(0.00,0.00)$)
              };

            \draw[black] (t1) -- (l1);
            \draw[black] (t1) -- (rm1);
            
            \draw[black, decorate,decoration=zigzag]
              plot[smooth] coordinates {
                ($(l2)+(0.00,0.00)$)
                (-1.8, -3.5)
                (0.8,-3.5)
                ($(rm2)+(0.00,0.00)$)
              };
            \draw[black] (rm1) -- (rm2) (l1) -- (l2);

            \draw[black] (t1) -- (r1) (t1) -- (lm1);
            \draw[teal] (rm1) -- (lm1);
            \draw[black] (lm1) -- (l1) (r1) -- (rm1);
            
            \draw[teal]
              plot[smooth] coordinates {
                ($(l1)+(0.00,0.00)$)
                (-1.2, 0.75)
                (1.2,0.75)
                ($(r1)+(0.00,0.00)$)
              };

            \node[draw=none, fill=none, shape=rectangle] at (0,-4.3) {\footnotesize Before switching along the zigzag path};
        \end{tikzpicture}
        \end{subfigure}
        \qquad
        \begin{subfigure}[b]{0.45\textwidth}
            \centering
        \begin{tikzpicture}[every node/.style={circle, draw, fill=black, inner sep=1pt}, node distance=1cm]
            \node (t1) at (0,0) {};
            \node (l1) at ($(t1) + (-2.2,-1)$) {};
            \node (l2) at ($(l1) + (0,-1)$) {};
            \node (r1) at ($(t1) + (2.2,-1)$) {};
            \node (rm1) at ($(t1) + (1.2,-1)$) {};
            \node (rm2) at ($(rm1) + (0,-1)$) {};
            \node (lm1) at ($(t1) + (-1.2,-1)$) {};
            \node (lm2) at ($(lm1) + (0,-1)$) {};
            \node (r2) at ($(r1) + (0,-1)$) {};
            \node (lm3) at ($(lm2) + (0,-1)$) {};
            \node (r3) at ($(r2) + (0,-1)$) {};
            \draw (t1) -- (l1);
            \draw (t1) -- (r1);
            \draw (rm1) -- (lm1);
            \draw (t1) -- (lm1);
            \draw (t1) -- (rm1);
            \draw (l1) -- (lm1);
            \draw (rm1) -- (r1);
            \draw (rm1) -- (rm2);
            \draw (l1) -- (l2);
            \draw (rm1) -- (rm1);
            \draw (lm2) -- (lm3);
            \draw (r2) -- (r3);
            \draw[dashed] (lm1) -- (lm2);
            \draw[dashed] (r1) -- (r2);

            \node[draw=blue, ellipse, fit=(l1) (l2), inner sep=4pt, fill=none, transform shape, scale=0.80] {};
            \node[draw=blue, ellipse, fit=(rm1) (rm2), inner sep=4pt, fill=none, transform shape, scale=0.80] {};
            \node[draw=blue, ellipse, fit=(lm2) (lm3), inner sep=4pt, fill=none, transform shape, scale=0.80] {};
            \node[draw=blue, ellipse, fit=(r2) (r3), inner sep=4pt, fill=none, transform shape, scale=0.80] {};

            \node[draw=none, fill=none, anchor=west] at ($(t1) + (0.15, 0.1)$) {$x$};
            \node[draw=none, fill=none, anchor=east] at ($(l1) + (-0.25, -0.25)$) {$a_1$};
            \node[draw=none, fill=none, anchor=west] at ($(r1) + (0.25, -0.25)$) {$a_4$};
            \node[draw=none, fill=none, anchor=west] at ($(lm1) + (0.25, -0.25)$) {$a_2$};
            \node[draw=none, fill=none, anchor=east] at ($(rm1) + (-0.25, -0.25)$) {$a_3$};
            \node[draw=none, fill=none, anchor=west] at ($(lm2) + (0.25, 0)$) {$v \in A(G_x)$};
            \node[draw=none, fill=none, anchor=west] at ($(r2) + (0.25, 0)$) {$u \in A(G_x)$};

            \draw
              plot[smooth] coordinates {
                ($(l1)+(0.00,0.00)$)
                (-1.2, 0.75)
                (1.2,0.75)
                ($(r1)+(0.00,0.00)$)
              };

            \draw[red!70!black, semithick] (t1) -- (l1);
            \draw[red!70!black, semithick] (t1) -- (rm1);
            \draw[red!70!black, semithick] (l1) -- (l2);
            \draw[red!70!black, semithick] (rm1) -- (rm2);
            
            \draw[red!70!black, semithick, decorate,decoration=zigzag]
              plot[smooth] coordinates {
                ($(l2)+(0.00,0.00)$)
                (-1.8, -3.5)
                (0.8,-3.5)
                ($(rm2)+(0.00,0.00)$)
              };

            \draw[black] (t1) -- (r1) (t1) -- (lm1);
            \draw[teal] (rm1) -- (lm1);
            \draw[black] (lm1) -- (l1) (r1) -- (rm1);
            
            \draw[teal]
              plot[smooth] coordinates {
                ($(l1)+(0.00,0.00)$)
                (-1.2, 0.75)
                (1.2,0.75)
                ($(r1)+(0.00,0.00)$)
              };
            
            \node[draw=none, fill=none, shape=rectangle] at (0,-4.3) {\footnotesize After switching and showing gadget~\textbf{(III)}};
        \end{tikzpicture}
        \end{subfigure}
        \caption{An $M$-alternating path between two non-consecutive neighbors $a_1,a_3$ in Case~4C, and the switched situation in which we attempt to create an edge between $a_1$ and $a_3$
        (along the zigzag path) or find a path connecting $a_2$ or $a_4$ to either $a_1$ or $a_3$ entirely within the factor-critical component, or we obtain gadget~\textbf{(III)} with $A = A(G_x)$ and the gadget triangle drawn in red.}
	\label{fig:xmatched_four_all}
\end{figure}

With respect to the starting argument, we may hence assume that $P''$ is an $M$-alternating path and the ends of $P''$ are not consecutive neighbors of~$x$. See \Cref{fig:xmatched_four_all}. 
Moreover, by a straightforward planarity argument, the vertices $a_1$ and $a_3$ can be joined by an edge not crossing the given drawing of $G$ (informally, drawn on either side along $P''$),
unless there exist, for $j=2,4$, a path $Q_j$ from $a_j$ to $V(P'')\setminus\{a_1,a_3\}$. Since there is no path between consecutive neighbors of $x$ in $D_0$, both paths $Q_2,Q_4$ intersect~$A(G)$.
Consequently, after wheel augmentation at $x$ in~$G$, the cycle $P''+x$ together with $Q_2$, $Q_4$ and the augmenting edges $a_1a_4$, $a_2a_3$ yield a gadget of type~\textbf{(III)}.

Otherwise, we can (analogously as above) choose a set $F$ from switched $M\triangle E(P'')$ and the two edges $xa_2$ and $xa_4$,
and define $H'$ as the contracted graph $H(G,F)$ with added edge $a_1a_3$ which now preserves planarity.
As a result, again, any proper $4$-coloring of $H'$ assigns the supervertex $[a_2xa_4]$ one color, say $1$, and two other colors, say $2$ and $3$, to the supervertices of $a_1$ and $a_3$.
So, in the lifted coloring of $G$, $x$ sees a unique color $2$ in $a_1$, and both $a_2$ and $a_4$ see a unique color $1$ in~$x$.
The rest again follows from \Cref{lem:controlled}.

\begin{toappendix}

\subsection{Case 2B}

The remaining difficult configurations require a different approach. We no longer try to resolve every local structure directly from the coloring of the wheel augmented graph. Instead, we use the minimal choice of the set $U$: if a configuration can be handled by the adjusted lift, or if after switching inside the relevant factor-critical component it reduces to one of the already treated configurations, then the corresponding unmatched vertex should not belong to $U$.

Consequently, any difficult configuration that remains around a vertex of $U$ must have a rigid structure. For Case~2B this rigidity is expressed first by the following separation lemma: unless the configuration reduces to a contractible case, the two matched neighbors of the unmatched vertex cannot be joined inside the same factor-critical component by an additional path avoiding the unmatched vertex. This separation is what later allows us either to contract the configuration to a high-degree vertex or to find one of the forbidden gadgets.

\begin{lemma}
\label{lem:case2B_no_long_path}
Let $G$ be a planar graph, let $M$ be a maximum matching of $G$, and let $x \in V(G)$ be an unmatched vertex of degree $2$ with neighbors $a_1,a_2$, where $a_1a_2 \in M$ (i.e., $x$ represents Case~2B). Let $D_0$ be the connected component of $G[D(G_x)]$ containing $x$.

If there is a path in $G[D_0]$ from $a_1$ to $a_2$ of length at least $2$ that avoids $x$, then there is an $M$-alternating path from $a_1$ to $a_2$ of length at least $2$ that avoids $x$, hence $x$ can be reduced to a contractible case.
\end{lemma}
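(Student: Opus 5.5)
The plan is to translate the statement into a question about even $M$-alternating paths from $x$ inside $D_0$, and then to glue two such paths along the given path. Write $P$ for the given $a_1$--$a_2$ path in $G[D_0]-x$ of length at least $2$. The target is an $M$-alternating $a_1$--$a_2$ path $R$ avoiding $x$ whose first and last edges are not in $M$. Then $R+a_1a_2$ is an $M$-alternating cycle in $D_0$. Switching $M$ along this cycle (as allowed in \Cref{lem:controlled}\,c)) leaves both $a_1$ and $a_2$ matched outwards. This is the contractible Case~2A.

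\emph{Step 1: reachability inside $D_0$.} By \Cref{lem:GE_component_matching_structure} and \Cref{lem:restriction_one_free}, $M|_{D_0}$ is a near-perfect matching of the factor-critical graph $D_0$ missing only $x$. So by \Cref{lem:fc_alt_path} (or \Cref{lem:alternating_components}), every $y\in V(D_0)$ is the end of an even $M$-alternating path $R_y\subseteq D_0$ from $x$ whose last edge is in $M$. Since $\deg(x)=2$ and $a_1a_2\in M$, every such $R_y$ with $y\notin\{x,a_1,a_2\}$ starts either $x,a_1,a_2,\dots$ or $x,a_2,a_1,\dots$. I call $y$ of \emph{type~$2$} if some $R_y$ starts $x,a_1,a_2$, and of \emph{type~$1$} if some $R_y$ starts $x,a_2,a_1$. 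Removing the initial segment of a type-$i$ path leaves an $M$-alternating path $T$ from $a_i$ to $y$ that avoids $x$ and $a_{3-i}$, starts with a non-$M$ edge, and ends with the $M$-edge at~$y$.

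\emph{Step 2: the easy endings.} Let $z$ be the neighbor of $a_2$ on $P$. Since $z\neq a_1$ and $a_2$ is matched to $a_1$, we have $za_2\notin M$. If $z$ is of type~$1$, take the path $T$ from $a_1$ to $z$ and append the edge $za_2$. The result is the required path $R$: $z$ ends $T$ with its $M$-edge, and $T$ avoids $a_2$. Symmetrically, if the neighbor $y$ of $a_1$ on $P$ is of type~$2$, the path from $a_2$ to $y$ followed by $ya_1$ works. So we may assume that $y$ is of type~$2$ only and $z$ is of type~$1$ only.

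\emph{Step 3: gluing along $P$ (main obstacle).} I would walk along $P$ from $y$ to $z$ and pick the first edge $uv$ at which the available reachability data changes. Here $u$ is reached from $a_2$ by an alternating path ending in an $M$-edge, and $v$ is reached from $a_1$ in the same way. Then
\[
(a_1 \leadsto v) \; vu \; (u \leadsto a_2),
\]
with the second subpath traversed in reverse, is a candidate for $R$. Two problems remain. First, if $uv\in M$, or if a vertex is reached only with odd parity, the parities must be adjusted by moving to the $M$-partner. Second, the two subpaths may intersect. For intersections I plan the standard blossom-shortcut argument: go to the first vertex of $(a_1\leadsto v)$ that lies on $(u\leadsto a_2)$. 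If the parities there agree, splice the two paths at that vertex. If they disagree, the intersection produces a smaller configuration of the same kind, namely a shorter $a_1$--$a_2$ connection within $D_0$, so we can induct on $|V(P)|$ or on the total length of the paths involved. Making this parity and splicing bookkeeping rigorous is the step I expect to be hardest. If the direct splicing becomes messy, I would fall back to the equivalent matching-theoretic formulation: $R$ exists if and only if $M\setminus\{a_1a_2\}$ has an augmenting path in $D_0-x$. I would then derive a contradiction from the Gallai--Edmonds structure of $D_0-x$, using that $P$ joins $a_1$ and $a_2$ there while every vertex remains reachable from $x$.
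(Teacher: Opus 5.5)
Your Steps~1 and~2 coincide with the paper's set-up: label the internal vertices of $P$ by whether an even alternating path from $x$ begins $x,a_1,a_2$ or $x,a_2,a_1$, then dispose of the two ends of $P$. There is one slip. After Step~2 the residual case is $y$ of type~$1$ only and $z$ of type~$2$ only, whereas you assume exactly the two cases you have just settled.

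The genuine gap is Step~3, which carries the whole lemma and which you leave as a plan. Write the transition edge as $uv$, with $u$ of type~$1$ only and $v$ of type~$2$. Let $T_1$ be the alternating path from $a_1$ to $u$ and $T_2$ the one from $a_2$ to $v$.

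\begin{itemize}
\item Your induction for a `wrong parity' intersection at $w$ has no reason to terminate. $T_1[a_1,w]\cup T_2[a_2,w]$ is another $a_1$--$a_2$ connection in $D_0-x$, but nothing makes it shorter than $P$, since the $T_i$ can be far longer than $P$. After re-choosing alternating paths for it, no other quantity is known to decrease.
\item The case $uv\in M$ is not a parity adjustment. Both paths then end with the edge $uv$, and the natural splice needs $T_1-u$ and $T_2-v$ to be disjoint, which is the same problem again.
\end{itemize}

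The paper avoids induction here. Since $T_1$ avoids $a_2$ and $T_2$ avoids $a_1$, every common vertex carries its $M$-edge on both paths, so the paths share an $M$-edge $e$. If they traverse $e$ in opposite directions, one splices through $e$; this is your agreeing-parity case. If they traverse it in the same direction, one follows $T_2$ up to $e$ and then $T_1$ on to $u$. This makes $u$ of type~$2$, contradicting the choice of the transition edge. Even so, the two splices need $e$ chosen extremal from opposite ends of the paths, so this step needs more care than the paper gives it.

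Your fallback is the better route, but as stated it is vacuous. The only vertices of $D_0-x$ exposed by $M\setminus\{a_1a_2\}$ are $a_1,a_2$, so the single edge $a_1a_2$ is always an augmenting path. Likewise $D_0-x$ has a perfect matching, so its Gallai--Edmonds decomposition carries no information. The right object is $K:=(D_0-x)-a_1a_2$, with the \emph{edge} $a_1a_2$ deleted.

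\begin{itemize}
\item If $K$ has a perfect matching $N$, the component of $N\triangle(M\cap E(D_0))$ through $a_1a_2$ is an alternating cycle. Deleting $a_1a_2$ from it gives the required path, of odd length at least $3$ and avoiding $x$.
\item If not, Tutte's theorem and parity ($|V(K)|$ is even) give $S\subseteq V(K)$ with $o(K-S)\ge|S|+2$. Since $K+a_1a_2=D_0-x$ has a perfect matching, $a_1,a_2\notin S$ lie in distinct odd components of $K-S$, and $o(K-S)=|S|+2$.
\item $S=\emptyset$ is impossible, because $P\subseteq K$ joins $a_1$ and $a_2$.
\item For $S\neq\emptyset$, pick $s\in S$. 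In $D_0-S$ the vertex $x$, whose only neighbors are $a_1,a_2$, fuses those two odd components into one odd component. Hence $o(D_0-S)=|S|+1>|S\setminus\{s\}|$, so $D_0-s$ has no perfect matching, contradicting factor-criticality of $D_0$.
\end{itemize}

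This closes the lemma with no splicing at all. It is shorter and more robust than the paper's path-gluing argument.
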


\begin{proof}
Suppose for contradiction that there exists a path
\[
P=(a_1=v_0,v_1,\dots,v_t=a_2),
\qquad t\ge 2,
\]
contained entirely in $D_0$ whose internal vertices avoid $N(x)\cup\{x\}$ and the configuration cannot be reduced to a contractible case.

Since $D_0$ is factor-critical and $M|_{D_0}$ leaves exactly $x$ unmatched, every vertex of $D_0$ is joined to $x$ by an even $M$-alternating path contained in $D_0$ (\Cref{lem:alternating_components}).

\medskip
\noindent
\emph{Step 1: existence of an $M$-alternating path between $a_1$ and $a_2$.}

For each vertex $v_i\in V(P)\setminus\{a_1,a_2\}$, fix a shortest even $M$-alternating path from $x$ to $v_i$. Since $a_1a_2\in M$, such a path starts with either
\[
x,a_1,a_2
\qquad\text{or}\qquad
x,a_2,a_1.
\]
Accordingly, we say that $v_i$ is reachable from $a_2$ or reachable from $a_1$, respectively.

Now traverse $P$ from $a_1$ toward $a_2$. If $v_1$ is reachable from $a_2$, then concatenating an even $M$-alternating path from $a_2$ to $v_1$ with the edge $a_1v_1$ yields an $M$-alternating path from $a_2$ to $a_1$ whose internal vertices avoid $N(x)\cup\{x\}$, and we are done. Note that since the path is even its last edge is in $M$, thus we can indeed add the edge $a_1v_1$ to the path and keep it alternating.

So assume that $v_1$ is reachable from $a_1$. Continue along $P$ until the first vertex $v_i$ that is reachable from $a_2$. Then $v_{i-1}$ is reachable from $a_1$.

Fix an even $M$-alternating path
\[
R:a_1\leadsto v_{i-1}
\]
and an even $M$-alternating path
\[
S:a_2\leadsto v_i,
\]
both with internal vertices avoiding $N(x)\cup\{x\}$.

We claim that these paths yield an $M$-alternating path between $a_1$ and $a_2$.

First suppose that $R$ and $S$ are internally disjoint. Then the edge $v_{i-1}v_i$ joins their endpoints. Since both $R$ and $S$ are even $M$-alternating paths starting with non-matching edges, they end with matching edges. 
Hence, if \(v_{i-1}v_i\notin M\), then
\[
R\cup v_{i-1}v_i\cup S
\]
is an $M$-alternating path from $a_1$ to $a_2$;
- if \(v_{i-1}v_i\in M\), then the alternating structure forces \(R\) and \(S\) to approach this edge from opposite directions, and the same union again contains an $M$-alternating path from $a_1$ to $a_2$.

Now suppose that $R$ and $S$ are not internally disjoint. Let $e$ be the first edge that belongs to both $R$ and $S$ when $R$ is traversed from $v_{i-1}$ toward $a_1$ and $S$ from $v_i$ toward $a_2$. Since $R$ and $S$ are alternating, they cannot first meet in a single vertex without also sharing an incident matching edge; thus such a first common edge exists.

If $R$ and $S$ traverse $e$ in opposite directions, then concatenating the initial segment of $R$, the edge $e$, and the initial segment of $S$ yields an $M$-alternating path from $a_1$ to $a_2$.

If, on the other hand, $R$ and $S$ traverse $e$ in the same direction, then from the moment $S$ reaches $e$ it may follow the remainder of $R$. Consequently, $v_{i-1}$ becomes reachable from $a_2$ by an even $M$-alternating path whose internal vertices avoid $N(x)\cup\{x\}$, contradicting the choice of $v_i$ as the first vertex along $P$ that is reachable from $a_2$.

Therefore, in all cases, there exists an $M$-alternating path from $a_1$ to $a_2$ whose internal vertices avoid $N(x)\cup\{x\}$.

Finally, if no vertex of $P\setminus\{a_1,a_2\}$ is reachable from $a_2$, then in particular $v_{t-1}$ is reachable from $a_1$, and concatenating the corresponding even $M$-alternating path from $a_1$ to $v_{t-1}$ with the edge $v_{t-1}a_2$ yields an $M$-alternating path from $a_1$ to $a_2$ whose internal vertices avoid $N(x)\cup\{x\}$.

Thus in every case there exists an $M$-alternating path between $a_1$ and $a_2$.

\medskip
\noindent
\emph{Step 2: reduction to a contractible case.}

Since $a_1a_2 \in M$, the $M$-alternating path between $a_1$ and $a_2$ starts and ends with edges not in $M$. Together with the edge $a_1a_2$, it forms an $M$-alternating cycle $C$ contained entirely in $D_0$.

Define
\[
M' := M \triangle C.
\]
This switching preserves maximality and is confined to $D_0$.

After the switch, the edge $a_1a_2$ is no longer in the matching, and both $a_1$ and $a_2$ are matched to vertices outside $\{a_1,a_2\}$. Hence the neighborhood of $x$ is no longer a matched pair, and $x$ is reduced to a contractible configuration (Case~2A).

\medskip

This contradicts the assumption that the configuration at $x$ cannot be reduced to a contractible case.
\end{proof}

The previous lemma shows that in Case~2B the only genuinely difficult situation is when the matched neighbors of $x$ are not connected inside the factor-critical component except through $N(x) \cup x$. Thus, in the proof of the full Case~2B lemma, we may assume that no such path exists and focus only on the remaining local degree configurations.

\begin{lemma}\label{lemma:case2B_full}
Let $G$ be a planar graph. There exists a maximum matching $M$ of $G$ and a subset $U\subseteq V(G)\setminus V(M)$ of unmatched vertices such that the following hold.
Let $W=V(G)\setminus(U\cup V(M))$ be the (independent) set of unmatched vertices complementary to~$U$.
With respect to $M$, let $G_W$ denote the union of the subgraphs $G_x$ for $x\in W$ and $W^* \subseteq W$ be the set of the vertices of~$W$ in a difficult configuration.
Let $E^*=E[N(W^*)]$ denote the set of edges induced by the neighborhood of $W^*$.
\begin{enumerate}[a)]
 \item Every vertex of $V(G)\setminus V(M)$ is of degree at most~$5$ in~$G$ (cf.~\Cref{cor:lowdeg_free}).
 \item There is a set of edges $F\subseteq E(G)$ such that $F\cap(E(G_W)\setminus E^*)=M\cap(E(G_W)\setminus E^*)$, 
let $H=H(G,F)$ be the corresponding contracted graph and $H'$ be the wheel augmented graph of $H$ at~$W$.
Then every proper $4$-coloring $c_1$ of $H'$ admits a $U$-protecting recoloring to a proper $4$-coloring $c$ of $H'$ such that,
denoting by $c'$ the $4$-coloring of $G$ into which the coloring $c$ lifts (cf.~\Cref{obs:perfect}),
every vertex $x\in W\cup N(W^*)$ has a neighbor $u$ in~$G$ whose color $c'(u)$ is unique in the open neighborhood~$N(x)$.
\item For every vertex $x\in U$ and every $z\in S(x)$ (including $z=x$) such that the degree of $z$ in~$G$ is at most~$5$, let $M'$ be any matching obtained from $M$ in one of the following ways:
\begin{itemize}\vspace*{-1ex}%
\item $M' := M \triangle E(P)$ for an even $M$-alternating $x$--$z$ path in~$G$ (switching from $x$ to~$z$),
\item $M' := M \triangle E(Q)$ for $x=z$ and an $M$-alternating cycle $Q$ which is contained in the same connected component of $G[D(G)]$ as~$x$ (switching a cycle ``nearby~$x$'').
\end{itemize}
Then, with respect to~$M'$;
\begin{itemize}\vspace*{-1ex}
        \item the configuration at $z$ is Case~2B, and if $D_z$ denotes the
        connected component of $G[D(G_z)]$ containing $z$, then $z$ and $D_z$ satisfy
        one of the following structural outcomes:
        \begin{enumerate}
            \item[(i)] $N[z]$ can be contracted to a single vertex of degree at
            least $6$ in such a way that the contraction does not decrease the
            degree of any vertex of the relevant factor-critical
            component (including the ones being contracted);

            \item[(ii)] $D_z$ yields a minor isomorphic to $K_{2,3}$ with
            bipartition $(A,B)$, where the two branch sets corresponding to
            $B$ are contained in $D_z$, and
            \[
                A\subseteq A(G_z), \qquad |A|=3 .
            \]
            This is gadget~\textbf{(II)};

            \item[(iii)] $D_z$ yields a minor obtained by contracting a
            connected subgraph of $D_z$ to a single vertex adjacent to four
            distinct vertices of $A(G_z)$. This is gadget~\textbf{(I)}.
        \end{enumerate}

        \item the configuration at $z$ is Case~4C;

        \item the configuration at $z$ is one of the two Kempe-contractible cases (Case~4A, Case~5B) and there exist $4$ pairwise disjoint paths between the neighbors of $z$ and the set~$A(G)$. This is gadget~\textbf{(I)}.
    \end{itemize}
\end{enumerate}
\end{lemma}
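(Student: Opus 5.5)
The plan mirrors the proof of \Cref{lem:controlled}, refined for Case~2B.

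\emph{Choice of $M$ and $U$.} Among all maximum matchings whose unmatched vertices have degree at most $5$ (\Cref{cor:lowdeg_free}), we take $M$ so that the set $U$ of \emph{bad} unmatched vertices is inclusion-minimal. An unmatched vertex $v$ is now called bad if one of the following holds:
\begin{itemize}
\item $v$ is Case~4C;
\item $v$ is Kempe-contractible and has four disjoint paths from $N(v)$ to $A(G)$;
\item $v$ is Case~2B and no admissible local modification resolves it.
\end{itemize}
Condition a) is then immediate. Condition c) follows by the same exchange argument as in \Cref{lem:controlled}. If some switch (along an $x$--$z$ path or along a cycle near $x$) made $z$ non-bad, \Cref{clm:Gx_structure} and \Cref{lem:switch_preserves_other_local_configs} show that the configurations at all other unmatched vertices stay unchanged. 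So $M'$ would have strictly fewer bad vertices, contradicting minimality. What remains for c) is the structural trichotomy (i)--(iii) for a Case~2B vertex $z$ that stays bad.

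\emph{Trichotomy (i)--(iii).} First, \Cref{lem:case2B_no_long_path} applies. If $a_1,a_2$ were joined inside $D_z$ by a path of length at least $2$ avoiding $z$, switching the resulting alternating cycle would give Case~2A, contradicting badness. So $N(z)$ separates $D_z$: every $a_1$--$a_2$ route in $D_z-z$ is the edge $a_1a_2$ itself. Next, contract $N[z]$ to one vertex. By this separation, no vertex of $D_z$ other than $a_1,a_2$ sees both of them. Hence the only possible loss of degree comes from common neighbours of $a_1$ and $a_2$ in $A(G_z)$. There are two cases.
\begin{itemize}
\item If $a_1,a_2$ have three common neighbours in $A(G_z)$, these together with $\{a_1\},\{a_2\}$ form a $K_{2,3}$ minor, which is outcome~(ii).
\item Otherwise the merged vertex keeps all private neighbours. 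Its degree is at least $\deg(a_1)+\deg(a_2)-2-2$, which is at least $6$ unless $a_1$ or $a_2$ has low degree. The low-degree situations are a handle, which \Cref{obs:handles} removes, or a situation in which $z$ can be switched to a contractible case. This gives~(i).
\end{itemize}
Outcome~(iii) arises when the merged vertex, or a vertex produced by the iterated contraction of \Cref{lem:reduce_low_degree_component}, already sees four vertices of $A(G_z)$. The listed Case~4C and Kempe-contractible alternatives are inherited verbatim from \Cref{lem:controlled}\,c).

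\emph{Condition b).} For non-bad Case~2B vertices $x\in W^*$, we construct $F$ locally. By non-badness, either a switch turns $x$ into a contractible case, in which case we simply use the switched matching on $G_x$, or an alternating $a_1$--$a_2$ structure permits an adjusted lift. For the adjusted lift we take $F$ to be $M$ with $a_1a_2$ replaced by $xa_1$ and an outward matching edge of $a_2$, or we contract the triple $[a_1xa_2']$, in the same spirit as the Case~4C sketch. The edge $a_1a_2$ is kept in the neighbourhood edge set $E^*$, which is exactly why b) only requires agreement of $F$ and $M$ outside $E^*$. Given any proper $4$-coloring of $H'$, the supervertex containing $x$ gives $a_1$ or $a_2$ a unique color. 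The other neighbor differs in color because of the wheel edge, so $x$ sees a unique color. Each of $a_1,a_2$ also has a unique color, namely that of its own partner in $F$. This gives the extra requirement for $N(W^*)$. All remaining vertices of $W$ are handled by the recoloring procedure of \Cref{lem:controlled}\,b). That procedure is $U$-protecting and only uses that $F$ agrees with $M$ on $G_W$ away from $E^*$.

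\emph{Main obstacle.} I expect the hardest step to be proving that the contraction in (i) reaches degree at least $6$ without decreasing any degree. This needs a careful count of how many neighbours $a_1$ and $a_2$ may share in $A(G_z)$, together with ruling out the degree-$2$ and degree-$4$ neighbours via handles and switchings. My first attempt would go through \Cref{lem:reduce_low_degree_component}, using the separation coming from \Cref{lem:case2B_no_long_path}.
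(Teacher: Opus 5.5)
Your overall frame matches the paper: an inclusion-minimal $U$, the exchange argument for c), the separation from \Cref{lem:case2B_no_long_path}, and contracting $N[z]$. The gap is your claim that the only low-degree situations are handles or switches to a contractible case. That claim is false, and it fails exactly where the paper needs its one genuinely new move.

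Take $\deg(a_1)=\deg(a_2)=4$, with $N(a_i)=\{z,a_{3-i},b_i,b_i'\}$ and $b_ib_i'\in M$ two distinct pairs. Assume also that $a_1,a_2$ are separated in $D_z-z$, since otherwise you switch to Case~2A.
\begin{itemize}
\item This is not a handle.
\item Switching $z$ to $a_1$ or to $a_2$ produces Case~4C, not a contractible case.
\item No $M$-alternating cycle through $a_1a_2$ exists, by the separation.
\item Contracting $N[z]$ gives a vertex whose only neighbors are $b_1,b_1',b_2,b_2'$. So (i) fails, and nothing in your argument produces (ii) or (iii).
\end{itemize}
Hence this configuration has to be resolved in b), and your adjusted lift does not resolve it.
\begin{itemize}
\item The vertex $a_2$ has no outward matching edge, since its partner is $a_1$.
\item Putting $xa_1$ and some $a_2b_2$ into $F$ violates $F\cap(E(G_W)\setminus E^*)=M\cap(E(G_W)\setminus E^*)$, because neither edge lies in $E^*$ or in $M$.
\item Even ignoring that, $a_2$ then sees $x,a_1$ in one color and $b_2,b_2'$ in another, so it has no unique color. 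The same happens with $[xa_1]$ alone.
\end{itemize}
The paper instead \emph{deletes} $a_1a_2$ from $F$. This is allowed because $a_1a_2\in E^*$, and it makes $[x],[a_1],[a_2]$ a triangle of singletons in $H$. Then $x$ sees two distinct colors. At $a_i$, the colors of $x$ and $a_{3-i}$ differ, while $b_i,b_i'$ share the single color of $[b_ib_i']$, so one of $x,a_{3-i}$ is unique. The vertices $b_i,b_i'$ keep their partners as witnesses.

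There are also two smaller problems.

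First, your degree count should be $\deg(a_1)+\deg(a_2)-6$, not $-4$. Each $a_i$ loses $z$ and its partner, and up to two common neighbors in $A(G_z)$ are merged. So when $\deg(a_1)=4$ and $\deg(a_2)\ge 6$ you need the paper's extra step. The two further neighbors of $a_1$ must be a matched pair, since otherwise switching gives Case~4B. Hence they lie in $D_z$, and by separation they are not neighbors of $a_2$. This gives the merged vertex degree at least $\deg(a_2)$.

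Second, a degree-$5$ neighbor can switch to Case~5B with four disjoint paths to $A(G)$. That is neither a handle nor contractible, and it is the actual source of outcome (iii) here. \Cref{lem:reduce_low_degree_component} concerns a whole component; it is not what yields (iii) for a single Case~2B vertex.
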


\begin{proof}
We begin as in the proof of \Cref{lem:controlled} by fixing a planar embedding of~$G$.
For a maximum matching $M'$ of $G$, we say that an unmatched vertex $v\in V(G)\setminus V(M')$ is \emph{bad} if the configuration at $v$ with respect to $M'$ is either one of the two difficult cases,
or one of the two Kempe-contractible cases with existing $4$ pairwise disjoint paths between $N(v)$ and~$A(G)$.
We pick $M$ as a maximum matching of $G$ such that all unmatched vertices are of degree at most~$5$ (\Cref{cor:lowdeg_free}) and that the set $U$ of bad vertices unmatched by $M$ is minimal over all such~$M$.
So, in particular, condition a) holds~true.

Let us further assume that $N_G(x)=\{a_1,a_2\}$, and that $x$ represents
Case~2B. Thus $a_1a_2\in M$. We will later distinguish cases according to the degrees of
$a_1$ and $a_2$. We will be verifying conditions b) and c) at the same time.

First observe that if there exists a vertex $z\in S(x)$ such that switching from $x$ to $z$ produces a configuration which is either contractible, Kempe-contractible without four disjoint paths to $A(G_z)$, or a Case~2B configuration not satisfying all the assumptions in item~c), then we may perform this switch and resolve the resulting unmatched vertex. In the first two cases this follows from \Cref{lem:controlled}, and in Case~2B it follows from the arguments below. Hence, if $x\in U$, this would contradict the minimality of $U$; otherwise, the switched matching satisfies condition~b).

It remains to justify that such a switch does not destroy the already resolved configurations of the other unmatched vertices. The local configurations of unmatched vertices are preserved by \Cref{lem:switch_preserves_other_local_configs}. Moreover, Kempe-contractible configurations which did not contain four disjoint paths to the corresponding set $A(G_u)$ retain this property after the switch. The same applies to Case~2B. Indeed, we show below that whenever Case~2B cannot be contracted to a single vertex of degree at least $6$ without decreasing the degree of some vertex in that component and its factor-critical component contains no gadget from condition~c) as a minor, then it is resolvable either by not including the matched edge $a_1,a_2$ in the set $F$ or by reducing it to a contractible case through switching inside its factor-critical component. These properties are preserved when switching from $x$ to $z$, because they depend only on the degrees of the neighboring vertices and on the existence of a path within the relevant factor-critical component between the two neighbors; both features remain unchanged after switching by \Cref{lem:Dx_disjoint_Gu}.

All switching operations used below are performed along alternating paths contained in the factor-critical component under consideration. Hence the matching is changed only inside this component. By \Cref{lem:Dx_disjoint_Gu}, this does not affect the auxiliary graph $G_u$ of any other unmatched vertex $u\neq x$. Consequently, all witnesses and local configurations already fixed outside the present component remain unchanged. In particular, vertices outside this component keep their status with respect to $U$, and the wheel-augmentation and Kempe-contractible-case arguments for them remain valid.

We take $F$ to coincide with $M$ on $G_x$, except in one case described below, in which we omit one edge from~$F$. Whenever we change the matching, we modify $F$ accordingly so that it again corresponds to the new matching.

We now begin the verification of conditions b) and c):

\medskip
\noindent\textbf{Case $I$.} $\deg_G(a_1)=\deg_G(a_2)=2$.

In this case, the vertices induce an isolated copy of $C_3$, which can be colored trivially. Hence condition b) holds.

\medskip
\noindent\textbf{Case $II$.} Exactly one of $a_1,a_2$ has degree $2$.

Then the subgraph induced by $x$, $a_1$, and $a_2$ contains a handle. This
contradicts our standing assumption, made after \Cref{obs:handles}, that all
handles have already been removed from the graph.

\medskip
\noindent\textbf{Case $III$.} One of $a_1,a_2$ has degree $3$.

Then, after switching to that vertex, its local neighborhood falls into one of
the contractible configurations. Hence the present Case~2B configuration reduces
to a previously treated case. If $x\notin U$, this means that the adjusted lift
can handle the configuration after the switch as prescribed in \Cref{lem:controlled}. If $x\in U$, replacing $M$ by the
switched maximum matching gives an admissible choice with a smaller set $U$,
contradicting the minimality of $|U|$.

\medskip
\noindent\textbf{Case $IV$.} $\deg_G(a_1)=\deg_G(a_2)=4$.

We claim that this case is handled by the fact that we remove $a_1,a_2$ from $F$. Since both
$a_1$ and $a_2$ have degree $4$, each of them must be adjacent to a
matched pair; otherwise, after switching to that vertex, we would obtain a
contractible configuration, contradicting the minimality of $|U|$.

Moreover, these two matched pairs must be distinct. Indeed, if both $a_1$ and
$a_2$ were adjacent to the same matched pair, then by
\Cref{obs:neighbors_of_x} those vertices would lie in the same factor-critical
component, forcing a reduction to a previously treated case. Again, this would
either be absorbed by the adjusted lift or would contradict the minimality of
$|U|$.

After lifting the proper coloring of the contracted graph $H$, the vertex $x$ sees a unique color on either $a_1$ or~$a_2$. We next consider $a_1$; the argument for $a_2$ is symmetric. The vertices $x$ and $a_2$ are forced to receive distinct colors in $H'$. Moreover, the two matched neighbors of $a_1$ receive the same color after the lift, since they correspond to a single vertex of~$H'$. Therefore, this common color can coincide with the color of at most one of $x$ and $a_2$, and hence either $x$ or $a_2$ has a color that is unique in the neighborhood of~$a_1$.

Notice that the matched neighbors of $a_1$ and $a_2$ belong to the factor-critical component containing $x$. They therefore remain represented by edges of $F$, since $F$ coincides with $M$ except for the single omitted edge described above. Furthermore, a factor-critical component contains at most one unmatched vertex. Consequently, these matching edges remain in $F$ even after other unmatched vertices in Case~2B are resolved by the same procedure.

The modification of $F$ is confined to the present factor-critical component and therefore does not interfere with any other already resolved unmatched vertex. Hence all such vertices keep their unique-colored witnesses. Moreover, all other matched vertices keep their matched partners as unique-colored neighbors. Note that the recolorings presented in \Cref{lem:controlled} are consistent with this argument and $x, a_1, a_2$ all keep their uniquely colored neighbors even after it. Hence we have shown that the condition b) holds and if $x \in U$ then we get a contradiction with minimality of $U$.

\medskip
\noindent\textbf{Case $V$.} One of $a_1,a_2$ has degree $5$.

Then, after switching to that vertex, we reduce either to a contractible case or
to Case~5B. If we reduce to a contractible case, then the configuration is already
handled after the switch: for $x\notin U$ the condition b) holds, while for $x\in U$ it contradicts the minimality of $|U|$.

It remains to consider the reduction to Case~5B. By
\Cref{lem:controlled}, this yields either a coloring modification so condition b) holds, or a gadget of type \textbf{(I)} with $A=A(G_x)$. In the former subcase, $x$ is handled and hence
cannot remain in the minimal set $U$. In the latter subcase, one of the
structural outcomes listed in the statement holds.

\medskip
\noindent\textbf{Case $VI$.} One of $a_1,a_2$ has degree $4$ and the other has
degree at least $6$.

Without loss of generality, let $\deg_G(a_1)\ge 6$ and $\deg_G(a_2)=4$.
Then $a_2$ must, after switching, represent Case~4C; otherwise we would reduce
to a contractible case or to Case~4A, both already treated, and we would again be
in a case where either condition b) holds or excluded by the minimality of $|U|$.
In particular, $a_2$ is adjacent to a matched pair distinct from the neighbors
of $a_1$.

By \Cref{lem:case2B_no_long_path}, if the present configuration is not
reducible to a contracted case, which would again either contradict the minimality of $|U|$ or mean $x$ is not in $U$ and can be resolved by changing this matching, then there is no path in the factor-critical
component $D_0$ between $a_1$ and $a_2$ whose internal vertices lie outside the
configuration. We will say that Case~2B configuration is internally separated,
and may be contracted to a single vertex.

The resulting contracted vertex has degree at least $6$: it inherits the
neighbors of $a_1$, and the contribution of $a_2$ is sufficient by the internal
separation assumption. Since the configuration is internally separated, this
contraction does not decrease the degree of any other vertex of $D_0$. Hence
structural outcome~\textbf{(i)} holds.

\medskip
\noindent\textbf{Case $VII$.} $\deg_G(a_1)\ge 6$ and $\deg_G(a_2)\ge 6$.

First suppose that $a_1$ and $a_2$ have three common neighbors in $A(G_x)$.
Then these two vertices together with those three common neighbors yield a
minor isomorphic to $K_{2,3}$, so the condition c) holds.

Hence we may assume that $a_1$ and $a_2$ have at most two common neighbors in
$A(G_x)$. As in the previous case, if the configuration is not reducible to a
contractible case, then by \Cref{lem:case2B_no_long_path} there is no path in
$D_0$ from $a_1$ to $a_2$ whose internal vertices lie outside the configuration.
Therefore the configuration is internally separated and may be contracted.

Because both $a_1$ and $a_2$ have degree at least $6$ and share at most two
external neighbors, the contracted vertex still has degree at least $6$.
Again, internal separation guarantees that no other degree decreases. The degrees of the vertices being contracted are not decreased either. Indeed, the two degree-at-least-$6$ vertices share at most two neighbors in $A$, and their remaining neighbors outside $N[x]$ are distinct. Thus, when $N[x]$ is contracted, each of them may lose at most two neighbors, but this loss is compensated by at least two neighbors contributed by the other degree-at-least-$6$ vertex.
Thus
structural outcome~\textbf{(i)} holds.

\medskip

This exhausts all possibilities for a Case~2B vertex. Furthermore, we may choose $F$ to coincide with the matching everywhere except in the neighborhoods of those unmatched vertices in Case~2B that require the modification described above. The resolutions of the contractible and Kempe-contractible cases therefore remain the same as in \Cref{lem:controlled}, since all our operations are consistent with the techniques used in its proof. Together with \Cref{lem:controlled}, this shows that every vertex in $U$ satisfies condition~c), while every Case~2B vertex that does not satisfy condition~c) can be resolved. Hence conditions~b) and~c) are both verified.
\end{proof}

\subsection{Case 4C}
\label{apx:difficult4C}

We begin by analyzing $M$-alternating paths between vertices in the neighborhood of an unmatched vertex $x$ representing Case~4C. In particular, we study how the existence of such paths, contained entirely within the factor-critical component of $x$, constrains the structure and enforces the existence of an $M$-alternating path between vertices of the neighborhood, which will later help us solve this case.

\begin{lemma}
\label{clm:adjacent_path_implies_adjacent_alternating}
Let $D_0$ be the connected component of $G[D(G_x)]$ containing $x$.
Suppose there exists a path
\[
P=(p_0,p_1,\dots,p_t)
\]
contained entirely in $D_0$, whose endpoints $p_0,p_t$ are consecutive vertices in
the cyclic order $(a_1,a_2,a_3,a_4)$ of the neighbors of $x$ and whose internal vertices avoid $N(x)\cup\{x\}$.

Then there exists an $M$-alternating path in $D_0$ joining two adjacent neighbors
of $x$ whose internal vertices avoid $N(x)\cup\{x\}$.
\end{lemma}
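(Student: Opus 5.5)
The plan is to adapt the ``first switch of reachability'' argument from Step~1 of \Cref{lem:case2B_no_long_path}, now with two matched pairs $a_1a_2,a_3a_4\in M$ instead of one. Every vertex $v\in V(D_0)\setminus N[x]$ is joined to $x$ by an even $M$-alternating path inside $D_0$ (\Cref{lem:alternating_components}). Since all neighbors of $x$ are matched inside $N(x)$, such a path begins with $x,a_i,a_{i'}$, where $a_ia_{i'}\in M$. Take a shortest one and delete its first two edges. This gives an even $M$-alternating path from $a_{i'}$ to $v$ that starts with a non-matching edge and has internal vertices avoiding $N(x)\cup\{x\}$; shortest choice should guarantee the avoidance, as in the 2B lemma. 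We label $v$ by this \emph{entry vertex} $a_{i'}\in\{a_1,a_2,a_3,a_4\}$, and for the endpoints we set $\mathrm{lab}(a_j)=a_j$ trivially.

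Walk along $P$ from $p_0$ to $p_t$. Up to symmetry (the cyclic order and the matching $\{a_1a_2,a_3a_4\}$ have an automorphism group acting transitively on consecutive pairs of the type ``matched'' $a_1a_2$ and ``unmatched'' $a_2a_3$) we may take $\{p_0,p_t\}=\{a_1,a_2\}$ or $\{a_2,a_3\}$. Since the labels at the two ends of $P$ differ, some edge $v_{i-1}v_i$ of $P$ has endpoints with different labels $a_j\neq a_k$. Let $R$ be the even alternating path from $a_j$ to $v_{i-1}$ and $S$ the one from $a_k$ to $v_i$, where for an endpoint of $P$ we use the trivial path. Exactly as in the 2B lemma, we combine $R$, the edge $v_{i-1}v_i$ and $S$. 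If $R,S$ are internally disjoint, the union is an alternating $a_j$--$a_k$ path. If they share an edge traversed in opposite directions, we shortcut. If they share an edge traversed in the same direction, $v_{i-1}$ would also carry label $a_k$, so we pick the first label change more carefully, namely the first vertex whose label set meets something other than $a_j$. The result is an $M$-alternating path $P'\subseteq D_0$ between two distinct neighbors $a_j,a_k$ of $x$, with internal vertices avoiding $N[x]$.

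It remains to turn ``two distinct neighbors'' into ``two consecutive neighbors''. The only non-consecutive pairs are $\{a_1,a_3\}$ and $\{a_2,a_4\}$. Here we use planarity. The cycle $x a_1 P' a_3 x$ separates $a_2$ from $a_4$ in the plane embedding, because $a_2,a_4$ lie on opposite sides of it in the rotation at $x$. The original path $P$ joins consecutive neighbors, say $a_1$ and $a_2$ (or $a_2$ and $a_3$), and cannot pass through $x$. So it must meet $P'$ at an internal vertex $w$, or it ends at $a_1$ or $a_3$, which lie on the cycle. Then the subpath of $P$ from $a_2$ to its first hit $w$ of $V(P')$, together with the part of $P'$ from $w$ to $a_1$ (or to $a_3$), gives a shorter path between consecutive neighbors. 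We then rerun the labelling argument on this new path, where the labels along $P'$ are controlled. Alternatively, we induct on $|P|$, choosing $P$ minimal among paths between consecutive neighbors, so that the alternating path found must already be between consecutive neighbors.

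The main obstacle I expect is the careful handling of the case where $R$ and $S$ overlap. We must ensure that the concatenation really is alternating, that it ends at the right parity at both entry vertices (both ends must be incident to non-matching edges, since $a_j,a_k$ are matched within $N(x)$), and that the minimality and planarity reduction in the last step terminates. I would first try the minimal-counterexample formulation, which makes both issues cleaner.
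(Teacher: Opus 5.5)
Your first half is sound and is the paper's own argument: label vertices by entry vertex, find the first label change along $P$, and splice $R$, the edge $v_{i-1}v_i$ and $S$. Your label-set refinement is in fact the right way to make the overlap case rigorous. Take $v_i$ to be the first vertex with $L(v_i)\not\subseteq\{p_0\}$, and let $y$ be the first vertex of $S$ lying on $R$. Then either $R[p_0,y]\cup S[a_k,y]$ is alternating, or $S[a_k,y]\cup R[y,v_{i-1}]$ puts $a_k$ into $L(v_{i-1})$, which is excluded.

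\textbf{The gap} is the final step, from a ``diagonal'' $P'$ (joining $a_1,a_3$ or $a_2,a_4$) to a consecutive pair.
\begin{itemize}
\item Your $P'$ always has $p_0$ as an endpoint, and so does $P$. Hence $P$ need not meet $P'$ in any internal vertex: it can leave $a_1$ into the side of the cycle $xa_1P'a_3x$ that contains $a_2$ and stay there. Then the ``first hit'' from $a_2$ is $a_1$ itself, and your new path is just $P$ again.
\item When an internal hit $w$ does exist, the paths $P[a_2,w]\cup P'[w,a_1]$ and $P[a_2,w]\cup P'[w,a_3]$ need not be shorter than $P$, since $P'$ can be arbitrarily long. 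So minimality of $|P|$ gives no contradiction.
\item Rerunning the labelling on the new, non-alternating path can again return a diagonal.
\end{itemize}
So neither the induction on $|P|$ nor the ``rerun'' version terminates as stated.

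\textbf{The missing idea} is to apply planarity to two \emph{alternating} paths rather than to $P$ and $P'$.
\begin{enumerate}
\item Run your labelling argument from both ends of $P$. This gives an alternating path from $p_0$ to some $a_k\neq p_0$, and one from $p_t$ to some $a_\ell\neq p_t$.
\item If neither joins consecutive neighbours, then, since $p_0,p_t$ are consecutive, they are the two crossing diagonals. Call them $R'$ from $a_1$ to $a_3$ and $S'$ from $a_2$ to $a_4$.
\item The vertices $a_2,a_4$ lie on opposite sides of $xa_1R'a_3x$, and $S'$ avoids $x,a_1,a_3$. So $S'$ meets an internal vertex of $R'$; let $y$ be the first such vertex along $S'$ from $a_2$.
\item The matching edge $yy'$ lies on $R'$, so $S'$ must enter $y$ by a non-matching edge. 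Exactly one of $R'[a_1,y]$ and $R'[a_3,y]$ enters $y$ through $y'y$.
\item Splicing that subpath with $S'[a_2,y]$ gives an $M$-alternating path from $a_2$ to $a_1$ or to $a_3$. Both are consecutive to $a_2$.
\end{enumerate}
This is how the paper finishes, and no length-minimality is needed.
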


\begin{proof}
As we show first, without loss of generality, we may assume that
\[
a_1a_2,\ a_3a_4\in M
\]
and that
\[
p_0=a_1,\qquad p_t=a_2.
\]
The other possible matching pattern $a_2a_3, a_4a_1 \in M$ is solved analogously.

Note that the matching cannot consist of two pairs of non-consecutive vertices in the cyclic order, that is, we cannot have $a_1a_3$ and $a_2a_4$ in $M$.

Indeed, consider the cycle (triangle) formed by $x,a_2,a_4$. In a planar embedding, this cycle separates the plane into an interior and an exterior region. Since $a_1$ and $a_3$ are non-consecutive neighbors of $x$ in the cyclic order, one of them must lie in the interior of this cycle and the other in the exterior. Consequently, any edge between $a_1$ and $a_3$ would have to cross the cycle, contradicting planarity.

Therefore, up to symmetry, we may assume that the matching pairs correspond to consecutive vertices in the cyclic order, namely
\[
a_1a_2,\ a_3a_4 \in M.
\]

By \Cref{lem:alternating_components}, every vertex of $D_0$ is joined to $x$ by
an even $M$-alternating path contained in $D_0$. For a vertex
$v\in V(P)\setminus \{a_1,a_2\}$, fix such a path of minimum length. Since
\[
a_1a_2,\ a_3a_4\in M,
\]
the first two edges of this path follow one of
\[
x,a_1,a_2;\qquad x,a_2,a_1;\qquad x,a_3,a_4;\qquad x,a_4,a_3.
\]
Accordingly, we say that $v$ is reachable from $a_2$, from $a_1$,
from $a_4$, or from $a_3$, respectively.

Because we chose the path shortest, once a vertex is reached from some $a_i$, the
path does not use any further edge in $G[N(x)\cup\{x\}]$; otherwise one could
shorten the initial segment and obtain a shorter alternating path from $x$.

Now consider the vertex $p_{t-1}$. If $p_{t-1}$ is reachable from some
$a_i\neq a_2$, then concatenating an even $M$-alternating path from $a_i$ to
$p_{t-1}$ with the edge $p_{t-1}p_t=p_{t-1}a_2$ yields an $M$-alternating path
from $a_i$ to $a_2$ whose internal vertices avoid $N(x)\cup\{x\}$. Indeed, the
path from $a_i$ to $p_{t-1}$ has even length and starts with a non-matching edge,
hence it ends at $p_{t-1}$ with a matching edge, so the final edge
$p_{t-1}a_2\notin M$ extends it alternatively.

Thus the only case left is that $p_{t-1}$ is reachable from $a_2$. In that case,
traverse $P$ backwards from $p_{t-1}$ toward $p_1$, and let $p_{t-i}$ be the
first vertex that is reachable from some $a_j\neq a_2$. Then
$p_{t-i+1}$ is reachable from $a_2$.

Fix an even $M$-alternating path $R$ from $a_j$ to $p_{t-i}$ and an even
$M$-alternating path $S$ from $a_2$ to $p_{t-i+1}$, both with internal vertices
avoiding $N(x)\cup\{x\}$.

We claim that these paths yield an $M$-alternating path from $a_j$ to $a_2$.

First suppose that $R$ and $S$ are internally disjoint. Then the edge
$p_{t-i}p_{t-i+1}$ joins their endpoints. Since both $R$ and $S$ are even
$M$-alternating paths starting with a non-matching edge, they end with matching
edges. Consequently, the last edge of $R$ at $p_{t-i}$ and the last edge of $S$
at $p_{t-i+1}$ are both in $M$.

If $p_{t-i}p_{t-i+1}\notin M$, then concatenating $R$, the edge
$p_{t-i}p_{t-i+1}$, and $S$ yields an $M$-alternating path from $a_j$ to $a_2$.

If $p_{t-i}p_{t-i+1}\in M$, then the alternating structure forces $R$ and $S$ to
approach this edge from opposite directions (since each endpoint is already
incident with a matching edge in its respective path). Hence, again,
\[
R\cup p_{t-i}p_{t-i+1}\cup S
\]
contains an $M$-alternating path from $a_j$ to $a_2$.

Now suppose that $R$ and $S$ are not internally disjoint. Let $e$ be the first
edge that belongs to both $R$ and $S$ when $R$ is traversed from $p_{t-i}$ toward
$a_j$ and $S$ is traversed from $p_{t-i+1}$ toward $a_2$. Since $R$ and $S$ are
alternating, they cannot first meet in a single vertex without also sharing an
incident matching edge; thus such a first common edge exists.

If $R$ and $S$ traverse $e$ in opposite directions, then concatenating the
initial segment of $R$, the edge $e$, and the initial segment of $S$ yields an
$M$-alternating path from $a_j$ to $a_2$.

If, on the other hand, $R$ and $S$ traverse $e$ in the same direction, then from
the moment $R$ reaches $e$ it may follow the remaining part of $S$ instead.
Consequently, $p_{t-i+1}$ is reachable from $a_j$ by an even $M$-alternating path
whose internal vertices avoid $N(x)\cup\{x\}$, contradicting the choice of
$p_{t-i}$ as the first vertex encountered when moving backwards along $P$ that is
reachable from a vertex other than $a_2$.

Therefore, in all cases, there exists an $M$-alternating path from $a_2$ to some
$a_j\neq a_2$ whose internal vertices avoid $N(x)\cup\{x\}$.

If $a_j\in\{a_1,a_3\}$, then the endpoints are consecutive and we are done. Hence
the only unresolved case is $a_j=a_4$, that is, there exists an $M$-alternating
path from $a_2$ to $a_4$ whose internal vertices avoid $N(x)\cup\{x\}$.

If no vertex of $P\setminus\{a_1,a_2\}$ is reachable from a vertex different from
$a_2$, then in particular $p_1$ is reachable from $a_2$, and concatenating the
corresponding even $M$-alternating path from $a_2$ to $p_1$ with the edge
$a_1p_1$ yields an $M$-alternating path from $a_2$ to $a_1$ whose internal
vertices avoid $N(x)\cup\{x\}$. So this case is also settled.

We now repeat the same argument from the other end. Namely, working from $a_1$
along $P$, we obtain an $M$-alternating path from $a_1$ to some
$a_\ell\neq a_1$ whose internal vertices avoid $N(x)\cup\{x\}$.

If $a_\ell\in\{a_2,a_4\}$, then the endpoints are consecutive and we are again done.
Hence the only remaining possibility is $a_\ell=a_3$, so that we have obtained
an $M$-alternating path from $a_2$ to $a_4$ and an $M$-alternating path from
$a_1$ to $a_3$, both with internal vertices avoiding $N(x)\cup\{x\}$.

Since these two paths join crossing pairs among
$a_1,a_2,a_3,a_4$ in their cyclic order around $x$, planarity implies that they
intersect. Let $e$ be their first common edge. As above, this common edge is an
edge of $M$. Starting from $a_4$ and following the path to $a_2$ until $e$, and
then continuing from $e$ along the path between $a_1$ and $a_3$, we obtain an
$M$-alternating path from $a_4$ to either $a_1$ or $a_3$. In both cases the
endpoints are consecutive neighbors of $x$, and the internal vertices avoid
$N(x)\cup\{x\}$.

Finally, observe that in the proof we only used the structure of alternating
paths starting at $x$, and not that the endpoints $p_0,p_t$ themselves form a
matching edge. In particular, the same argument applies when $P$ joins any pair
of consecutive vertices among $a_1,a_2,a_3,a_4$, even if this pair is not matched
by $M$ (for example, $a_2$ and $a_3$). Thus the conclusion holds for every such
choice of endpoints.

All constructed paths lie entirely in the factor-critical component $D_0$.

This proves the claim.
\end{proof}

We can now return to the full analysis of Case~4C. By \Cref{clm:adjacent_path_implies_adjacent_alternating}, the existence of a path in the factor-critical component joining two consecutive neighbors of $x$ (with internal vertices avoiding $N(x)\cup\{x\}$) already yields an $M$-alternating path that will help us reduce it to a contractible case by switching along this path, this will be very useful to us in the upcoming case analysis.

\begin{lemma}\label{lemma:case4C_full}
Let $G$ be a planar graph. There exists a maximum matching $M$ of $G$ and a subset $U\subseteq V(G)\setminus V(M)$ of unmatched vertices such that the following hold.
Let $W=V(G)\setminus(U\cup V(M))$ be the (independent) set of unmatched vertices complementary to~$U$.
With respect to $M$, let $G_W$ denote the union of the subgraphs $G_x$ for $x\in W$ and $W^* \subseteq W$ be the set of the vertices of~$W$ in a difficult configuration.
Let $E^*=E[N(W^*)]$ denote the set of edges induced by the neighborhood of $W^*$, and let $D^\#$ be the union of the factor-critical components of vertices in $W^*$ representing Case~4C.

\begin{enumerate}[a)]
 \item Every vertex of $V(G)\setminus V(M)$ is of degree at most~$5$ in~$G$ (cf.~\Cref{cor:lowdeg_free}).
 \item There is a maximum matching $M'$ that differs from $M$ only locally within $D^\#$, and a set of edges $F\subseteq E(G)$ such that $F\cap(E(G_W)\setminus E^*)=M'\cap(E(G_W)\setminus E^*)$, let $H=H(G,F)$ be the corresponding contracted graph and $H'$ be the wheel augmented graph of $H$ at~$W$.
Then every proper $4$-coloring $c_1$ of $H'$ admits a $U$-protecting recoloring to a proper $4$-coloring $c$ of $H'$ such that,
denoting by $c'$ the $4$-coloring of $G$ into which the coloring $c$ lifts (cf.~\Cref{obs:perfect}),
every vertex $x\in W\cup N(W^*)$ has a neighbor $u$ in~$G$ whose color $c'(u)$ is unique in the open neighborhood~$N(x)$.
\item For every vertex $x\in U$ and every $z\in S(x)$ (including $z=x$) such that the degree of $z$ in~$G$ is at most~$5$, let $M'$ be any matching obtained from $M$ in one of the following ways:
\begin{itemize}\vspace*{-1ex}%
\item $M' := M \triangle E(P)$ for an even $M$-alternating $x$--$z$ path in~$G$ (switching from $x$ to~$z$),
\item $M' := M \triangle E(Q)$ for $x=z$ and an $M$-alternating cycle $Q$ which is contained in the same connected component of $G[D(G)]$ as~$x$ (switching a cycle ``nearby~$x$'').
\end{itemize}
Then, with respect to~$M'$;
\begin{itemize}\vspace*{-1ex}
        \item the configuration at $z$ is Case~2B, and if $D_z$ denotes the
        connected component of $G[D(G_z)]$ containing $z$, then $z$ and $D_z$ satisfy
        one of the following structural outcomes:
        \begin{enumerate}
            \item[(i)] $N[z]$ can be contracted to a single vertex of degree at
            least $6$ in such a way that the contraction does not decrease the
            degree of any vertex of the relevant factor-critical
            component (including the ones being contracted);

            \item[(ii)] $D_z$ yields a minor isomorphic to $K_{2,3}$ with
            bipartition $(A,B)$, where the two branch sets corresponding to
            $B$ are contained in $D_z$, and
            \[
                A\subseteq A(G_z),\qquad |A|=3 .
            \]
            This is gadget~\textbf{(II)};

            \item[(iii)] $D_z$ yields a minor obtained by contracting a
            connected subgraph of $D_z$ to a single vertex adjacent to four
            distinct vertices of $A(G_z)$. This is gadget~\textbf{(I)}.
        \end{enumerate}

        \item the configuration at $z$ is Case~4C, and if $D_z$ denotes the
        connected component of $G[D(G_z)]$ containing $z$, then $z$ and $D_z$ satisfy
        one of the following structural outcomes:
        \begin{enumerate}
            \item[(i)] $N[z]$ can be contracted to a single vertex of degree at
            least $6$ in such a way that the contraction does not decrease the
            degree of any vertex of the relevant factor-critical
            component (including the ones being contracted);

            \item[(ii)] $D_z$ yields a minor isomorphic to $K_{2,3}$ with
            bipartition $(A,B)$, where the two branch sets corresponding to
            $B$ are contained in $D_z$, and
            \[
                A\subseteq A(G_z),\qquad |A|=3 .
            \]
            This is gadget~\textbf{(II)};

            \item[(iii)] $D_z$ yields a minor consisting of a triangle whose
            three branch sets are contained in $D_z$, together with two vertices
            $a_1,a_2\in A(G_z)$ adjacent to all three branch sets of the
            triangle. This is gadget~\textbf{(III)};

            \item[(iv)] $D_z$ yields a minor obtained by contracting a connected
            subgraph of $D_z$ to a single vertex adjacent to four distinct
            vertices of $A(G_z)$. This is gadget~\textbf{(I)}.
        \end{enumerate}

        \item the configuration at $z$ is one of the two Kempe-contractible cases (Case~4A, Case~5B) and there exist $4$ pairwise disjoint paths between the neighbors of $z$ and the set~$A(G)$. This is gadget~\textbf{(I)}.
    \end{itemize}
\end{enumerate}
\end{lemma}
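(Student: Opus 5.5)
We would follow the template of \Cref{lem:controlled} and \Cref{lemma:case2B_full}. Fix a planar embedding and take $M$ as there: every unmatched vertex has degree at most~$5$ (\Cref{cor:lowdeg_free}), and the set $U$ of bad unmatched vertices is inclusion-minimal. Condition a) is then immediate. Condition c) is proved by contradiction with minimality. Suppose some admissible switch $M'$ (along an even path from $x$ to $z$, or along an alternating cycle in the component of $x$) makes $z$ resolvable. That is, $z$ becomes contractible, Kempe-contractible without four disjoint paths, Case~2B without outcomes (i)--(iii), or Case~4C without outcomes (i)--(iv). Then replacing $M$ by $M'$ decreases $U$. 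By \Cref{lem:switch_preserves_other_local_configs} and \Cref{lem:Dx_disjoint_Gu}, no other unmatched vertex changes its configuration. So it remains to show that every Case~4C vertex $x\in W$ either is resolved by a local choice of $M'$ and $F$ inside its component $D_0$, or realises one of the outcomes (i)--(iv). Case~2B vertices are handled verbatim by \Cref{lemma:case2B_full}.

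Let $x$ have neighbours $a_1,a_2,a_3,a_4$ in cyclic order, with $a_1a_2,a_3a_4\in M$; the crossing pattern is excluded by planarity, as in \Cref{clm:adjacent_path_implies_adjacent_alternating}.

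\textbf{Step 1.} Suppose some path in $D_0$ joins two consecutive $a_i$ while avoiding $N[x]$. By \Cref{clm:adjacent_path_implies_adjacent_alternating} we then get an $M$-alternating path $P'$ between consecutive neighbours, and there are three subcases.
\begin{itemize}
\item If the ends are $a_1,a_2$ (or $a_3,a_4$), switching the cycle $P'+a_1a_2$ gives Case~4B. This contradicts minimality, or resolves $x$ when $x\in W$.
\item If the ends are $a_2,a_3$ and $a_1a_4\in E(G)$, switching the cycle $P'+a_2a_1+a_1a_4+a_4a_3$ again gives Case~4B.
\item If the ends are $a_2,a_3$ and $a_1a_4\notin E(G)$, put $M'=M\triangle E(P')$ and let $F$ be $M'$ together with $xa_1,xa_4$. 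Take $H'$ to be $H(G,F)$ plus the planar edge $[a_2][a_3]$. The lift then gives $x$ a unique colour at $a_2$, and gives $a_1,a_4$ a unique colour at $x$.
\end{itemize}
All changes stay inside $D^\#$ and inside $E^*$, which is exactly what b) permits. The protecting recolorings of \Cref{lem:controlled} are unaffected, since they never touch $D_0$ except by permuting colours.

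\textbf{Step 2.} Now assume there is no such consecutive path. Every vertex of $D_0$ is switchable from $x$. By minimality, each vertex of degree at most $5$ must, after switching to it, be Case~2B, Case~4C, or Case~5B with four disjoint paths. So degrees in $D_0$ are $2$, $4$, $5$ (Case~5B with gadget~(I), giving outcome (iv) directly), or at least~$6$. If $N[x]$ contracts safely to a vertex of degree $\ge6$, we are in outcome (i). Otherwise we run the density argument of \Cref{lem:reduce_low_degree_component} on $B=V(D_0)$ and $A=A(G_x)$. Its hypotheses hold: reducibility of degree-$2$ and degree-$4$ vertices is the separation from Step 1 together with \Cref{lem:case2B_no_long_path}, applied at each such vertex after switching, and handles have been removed. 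It yields gadget (I) or (II), that is outcome (iv) or (ii), unless the only failure is a path $P''$ in $D_0$ joining the non-consecutive pair $a_1,a_3$ (or $a_2,a_4$). By the same alternating-path merging we may take $P''$ to be $M$-alternating.

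\textbf{Step 3.} Given $P''$ from $a_1$ to $a_3$, either the edge $a_1a_3$ can be drawn along $P''$ without crossings, or both $a_2$ and $a_4$ have paths $Q_2,Q_4$ to interior vertices of $P''$. By Step 1 these paths must pass through $A(G)$, so they end in $A(G_x)$. In the latter situation, contract the cycle $P''+x$ into a triangle with branch sets built from $a_1$, $a_3$ and $x$, using the wheel-augmentation edges. The ends of $Q_2$ and $Q_4$ in $A(G_x)$ are then adjacent to all three branch sets, giving gadget~(III), which is outcome (iii). In the former situation, let $M'=M\triangle E(P'')$ and let $F$ be $M'$ together with $xa_2,xa_4$. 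Take $H'=H(G,F)+[a_1][a_3]$. Exactly as in Step 1, $x$ sees a unique colour and $a_2,a_4$ see $x$ uniquely, which resolves $x$ and places $N(x)$ into $W\cup N(W^*)$ correctly.

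\textbf{Main obstacle.} We expect the main obstacle to be Step 3. The branch sets must be chosen so that the three parts of the triangle, and the two $A$-vertices adjacent to all three, really come from disjoint connected pieces of $D_0$. We must also check that the wheel-augmentation edges are legitimate for this minor, meaning they do not change the count of $|A(G_x)|+1$ components needed later in \Cref{lemma_gadgets}. We would first try to take the branch sets as $\{a_1\}\cup(\text{half of }P'')$, $\{a_3\}\cup(\text{rest of }P'')$ and $\{x,a_2,a_4\}$, and verify the required adjacencies using the planar separation argument.
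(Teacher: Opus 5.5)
Your Steps~1 and~3 follow the paper's Case~IV closely, i.e.\ a path in $D_0$ joining consecutive, resp.\ non-consecutive, neighbours of $x$. Step~2, however, has a genuine gap.

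In your ``otherwise'' branch, $N[x]$ does \emph{not} contract safely. Yet \Cref{lem:reduce_low_degree_component} requires every degree-$2$ and degree-$4$ vertex of $B$, in particular $x$ itself, to be reducible in the full sense:
\begin{itemize}
\item item~1, no path in $G[B]-v$ between two of its neighbours, which is all that Step~1 and \Cref{lem:case2B_no_long_path} give you;
\item items~2--3, identifying $N[v]$ decreases no degree and yields a vertex of degree at least~$6$;
\item $N(v)\subseteq B$ and the local overlap condition, which you never check.
\end{itemize}
So the lemma is unavailable exactly where you invoke it. For the other degree-$4$ vertices of $D_0$, items~2--3 are precisely the statement you are proving (applied after switching), so the argument is circular. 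The lemma also has no exceptional output of the form ``a non-consecutive path $P''$''. In the paper the density lemma is used only later, in \Cref{lem:usecase_of_density}, after the present lemma has supplied outcome~(i) for each low-degree vertex.

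What you are missing is the paper's direct local degree analysis of $a_1,\dots,a_4$ when no path of $D_0-N[x]$ joins two of them (its Cases~I--III):
\begin{itemize}
\item degrees $3,5$ are removed by exposing $a_i$, giving a contractible case or Case~5B;
\item two degree-$2$ neighbours cannot be consecutive (that would be a handle), and are treated as Case~2B;
\item a degree-$4$ neighbour with one outside neighbour is exposed along $x,a_{i\pm1},a_i$ to give a contractible case;
\item a degree-$4$ neighbour with two outside neighbours forces those two to be matched together;
\item for neighbours of degree at least~$6$, one compares the at most three or four edges each sends into $N[x]$ with its outside neighbours. Absent a path in $D_0-N[x]$, shared outside neighbours lie in $A(G_x)$. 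So either two of them share three such vertices (gadget~\textbf{(II)}), or contracting $N[x]$ loses no degree.
\end{itemize}
This is what produces outcome~(i) or~(ii).

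In Step~3 your tentative branch sets fail. With $\{x,a_2,a_4\}$ as one side of the triangle, the $A(G_x)$-vertex on $Q_2$ sees only the half of $P''$ where $Q_2$ lands, so it meets only two of the three branch sets. Instead, use the triangle $\{x\}$, $\{a_1\}\cup{}$(initial part of $P''$), and (rest of $P''$)${}\cup\{a_3\}$. For the two $A$-vertices, contract $a_2$ with the segment of $Q_2$ up to its first vertex in $A(G_x)$; this segment lies in $D_0$ and avoids $C$. Do the same for $a_4$ on the other side of $C$. Then:
\begin{itemize}
\item $a_2$ is adjacent to $x$, to $a_1$ (edge $a_1a_2$) and to $a_3$ (wheel edge);
\item $a_4$ is adjacent to $x$, to $a_3$ and to $a_1$ (wheel edge).
\end{itemize}

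In the resolving branch you also need $a_2a_4\notin E(G)$, which holds because $C$ separates $a_2$ from $a_4$. Your $M\triangle E(P')$ and $M\triangle E(P'')$ should read $M\triangle E(a_1a_2P'a_3a_4)$ and $M\triangle E(a_2a_1P''a_3a_4)$. In each case add back one edge at $x$ to keep the matching maximum.
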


\begin{proof}
Again as in proof of \Cref{lemma:case2B_full} and \Cref{lem:controlled} we fix a planar embedding of~$G$.
For a maximum matching $M'$ of $G$, we say that an unmatched vertex $v\in V(G)\setminus V(M')$ is \emph{bad} if the configuration at $v$ with respect to $M'$ is either one of the two difficult cases,
or one of the two Kempe-contractible cases with existing $4$ pairwise disjoint paths between $N(v)$ and~$A(G)$.
We pick $M$ as a maximum matching of $G$ such that all unmatched vertices are of degree at most~$5$ (\Cref{cor:lowdeg_free}) and that the set $U$ of bad vertices unmatched by $M$ is minimal over all such~$M$.
So, in particular, condition a) holds~true.

We will proceed by a case analysis to show, as before, that either we can color the Case~4C configuration or reduce it to a contractible or Kempe-contractible case, otherwise the configuration forces one of the outcomes defined in condition~c). The other bad configurations will remain resolvable exactly as described in \Cref{lemma:case2B_full} and \Cref{lem:controlled} as all our techniques for Case~4C will be consistent with the techniques used in those proofs.

As in \Cref{lemma:case2B_full}, if there exists a vertex $z \in S(x)$ such that switching from $x$ to $z$ creates a case of degree less than $6$ which does not satisfy the conditions in item~c), then we may simply switch from $x$ to $z$. All consistency arguments are the same as in \Cref{lemma:case2B_full}.

Again as in \Cref{lemma:case2B_full}, all switching operations used below are performed along alternating paths contained in the factor-critical component under consideration. Hence the matching is changed only inside this component. By \Cref{lem:Dx_disjoint_Gu}, this does not affect the auxiliary graph $G_u$ of any other unmatched vertex $u\neq x$.

Consequently, all witnesses and local configurations already fixed outside the present component remain unchanged. In particular, vertices outside this component keep their status with respect to $U$, and the wheel-augmentation, Kempe-contractible-case, and Case~2B arguments for them remain valid.

We take $F$ to coincide with $M$ on $G_x$, except for in the neighborhoods of some difficult cases. In case of the Case~2B it will differ from $M$ exactly as described in \Cref{lemma:case2B_full}, in case of Case~4C we will, based on the situation, be changing the matching locally within its factor-critical component, then change the set $F$ according to the new matching and finally in the neighborhood of the Case~4C we might do additional changes to $F$. Finally, note that whenever we change the matching, we modify $F$ accordingly so that it again corresponds to the new matching, aside from the neighborhoods of some difficult cases that require a special set of edges in $F$ as described further in the proof.

Let $a_1,a_2,a_3,a_4$ be the consecutive neighbors of $x$ in cyclic order, where $a_1a_2 \in M$ is the matched pair witnessing Case~4C. By the same argument as in \Cref{clm:adjacent_path_implies_adjacent_alternating}, matching edges between non-adjacent vertices in this cyclic ordering are excluded by planarity.

First observe that no vertex $a_i$ can have degree $1$. If some $a_i$ has degree $3$ or $5$, then by switching along a suitable $M$-alternating path we may expose $a_i$, reducing the configuration either to a contractible case or to Case~5B, which is already handled and can either be colored or produce gadget~\textbf{(I)}, satisfying condition c). Hence we may assume that each $a_i$ has degree $2$, $4$, or at least $6$.

We distinguish two main situations. Either there is no path contained entirely in $D_0$ joining two vertices of $N(x)$ whose internal vertices avoid $N[x]$, or such a path exists. By \Cref{clm:adjacent_path_implies_adjacent_alternating}, the latter also implies an $M$-alternating path.

We begin with the first situation. Throughout the following cases, we assume that there is no path of length greater than $1$ in $D_0$ joining two vertices of $\{a_1,a_2,a_3,a_4\}$ whose internal vertices avoid $N[x]$, until we say otherwise.

\medskip
\noindent
\textbf{Case $I$.} At least two vertices among $a_1,a_2,a_3,a_4$ have degree $2$.

If two such vertices are consecutive neighbors in the cyclic order around $x$, they form a handle, contrary to our assumptions. Hence all degree-$2$ vertices are non-adjacent.

Each of them can therefore be treated independently as an instance of Case~2B. By \Cref{lemma:case2B_full}, each is resolvable by the lemma or contracted without decreasing any degree in $D_0$. Since they are non-adjacent and no path in $D_0$ joins vertices of $N(x)$ outside $N(x)\cup\{x\}$, these operations do not interfere, otherwise they produce a gadget satisfying condition c).

    \medskip
\noindent
\textbf{Case $II$.} At least two vertices among $\{a_1,a_2,a_3,a_4\}$ have degree $4$.

We distinguish several subcases.

\smallskip
\noindent
\emph{Case $II$a.} A vertex of degree $4$ has exactly one neighbor outside $N(x)\cup\{x\}$.

Without loss of generality, let this vertex be $a_1$. Then we may switch along the path $x,a_2,a_1$, exposing $a_1$ and reducing the configuration to a contractible case.

\smallskip
\noindent
\emph{Case $II$b.} Two vertices of degree $4$ have all their neighbors inside $N(x)\cup\{x\}$.

Without loss of generality, assume these vertices are $a_1$ and $a_3$. Then they are adjacent to all other vertices in $\{a_1,a_2,a_3,a_4,x\}$, forming a $K_4$ on $\{a_1,a_2,a_3,x\}$ and $\{a_1,a_4,a_3,x\}$

Observe that two consecutive vertices in the cyclic ordering around $x$ (or the matched pair $a_1,a_2$) cannot both have all their neighbors inside $N(x)\cup\{x\}$, as this would create a $K_5$, contradicting planarity. Hence $a_2$ and $a_4$ must have neighbors outside this set.

If either $a_2$ or $a_4$ has degree at most $5$, then we reduce to a previously handled case: degree $3$ or $5$ yields a contractible case or Case~5B, and degree $4$ with an external neighbor allows a switching reduction (along $x,a_1,a_2$ or $x,a_3,a_4$) to Case~4B. Thus, we may assume that both $a_2$ and $a_4$ have degree at least $6$.

Since two of the degree $4$ vertices have all their neighbors in $N[x]$, the remaining two vertices cannot be on the same face, as that would imply we could connect them, while preserving planarity, which would create a $K_5$. Therefore, the degree $6$ vertices cannot share a single neighbor outside of $N[x]$. Hence, after contracting the configuration on $\{x,a_1,a_2,a_3,a_4\}$ into a single vertex, each of $a_2$ and $a_4$ loses at most three incident edges but gains at least three new ones, so their degree remains at least $6$. The vertices of degree $4$ become adjacent to at least six vertices after contraction. Hence, condition c) holds.

\smallskip
\noindent
\emph{Case $II$c.} Two vertices of degree $4$ each have exactly two neighbors outside $N(x)\cup\{x\}$.

Note that these neighbors of the degree $4$ vertices must be matched together, otherwise we could switch $x$ to this vertex and reduce to a contractible case~4B. However, if they are matched together, then they are in $D_0$, as they are in $S(x)$ and also adjacent to the degree $4$ vertex which is in $D_0$.

The remaining vertices in $N(x)$ cannot both have degree $2$ (by Case~$I$). If one of the remaining vertices has degree $4$, then in this case there are at least six distinct neighbors of the configuration outside $N(x)\cup\{x\}$. After contracting the configuration into a single vertex, each vertex loses at most four incident edges but gains at least four new ones, so no degree decreases. Since there are no internal paths between the vertices of $N(x)$ outside the configuration, the degrees of all vertices in $D(G_x)$ are preserved. Hence, the condition c) holds.

If the remaining vertices have degree at least $6$, then, if each such vertex has at least three neighbors outside $N(x)\cup\{x\}$, then the same contraction argument as above applies and yields condition c). Note that the vertices of degree at least $6$ can share at most $2$ neighbors, as they are not connected by a path in $D_0$, as such, the only vertices they can share are in $A(G_x)$, however if they shared $3$ or more vertices we could immediately get a gadget~\textbf{(II)} with $A = A(G_x)$.

This completes Case~$II$.

    \medskip
\noindent
\textbf{Case $III$.} At least two vertices among $\{a_1,a_2,a_3,a_4\}$ have degree at least $6$.

\smallskip
\noindent
\emph{Case $III$a.} Two vertices have degree at least $6$, and at least one of the remaining vertices has degree $2$ or $4$.

Since we assume that no switching reduces the configuration to a contractible case, the vertex of degree $4$ cannot be safely switched, and hence both vertices of degree at least $6$ have at most three neighbors in $N(x)\cup\{x\}$. Therefore each of them has at least three neighbors outside the configuration.

If the two degree-$6$ vertices have the same three or more neighbors outside the configuration, then, together with these vertices in $A(G_x)$, they form a $K_{2,3}$, yielding a gadget of type~\textbf{(II)} with~$A=A(G_x)$.

Otherwise, they do not share all their external neighbors. If they share at most two such neighbors, we contract the configuration on $N(x)\cup\{x\}$ into a single vertex. Each degree-$6$ vertex loses at most three incident edges (to the contracted set) but gains at least three new ones (coming from the other vertex and from distinct external neighbors), so its degree does not decrease.

Moreover, no vertex in the configuration loses degree, and since there are no paths in $D_0$ between vertices of $N(x)$ outside the configuration, the degrees of other vertices in $D(G_x)$ are also preserved. Hence condition c) holds.

Note that the degree $4$ vertex cannot have all neighbors in $N[x]$, as that would imply that we can reduce to a contractible Case~2A, since we could switch to the degree $2$ vertex, and then there would be a path fully contained in the factor-critical component between its immediate neighbors (\Cref{lem:case2B_no_long_path}).

\smallskip
\noindent
\emph{Case $III$b.} All four vertices have degree at least $6$.

If every vertex has at most three neighbors in $N(x)\cup\{x\}$, then a similar contraction argument as above applies (also shown in \cref{lem:reduce_low_degree_component}): each vertex has at least three external neighbors, and after contracting the configuration, no degree decreases. Thus, condition c) holds.

\smallskip
\noindent
It remains to consider the case where some vertices have four neighbors in $N[x]$. Without loss of generality, assume that $a_2$ and $a_4$ each have four neighbors in $N(x)\cup\{x\}$.

We now analyze their neighbors outside the configuration. Since we assume that there are no paths inside $D_0$ between non-adjacent vertices of $N(x)$, any common neighbor of $a_2$ and $a_4$ outside $N(x)\cup\{x\}$ must lie in $A(G_x)$.

We claim that $a_2$ and $a_4$ can share at most two such neighbors with $a_3$ or $a_1$. Indeed, suppose for contradiction that they share three distinct neighbors in $A(G_x)$. Then these three vertices, together with $a_2$ and the contracted $a_1,x,a_2$, form a $K_{2,3}$ subgraph, yielding a gadget of type~\textbf{(II)} with $A = A(G_x)$.

Hence $a_2$ and $a_4$ have at most two common neighbors outside $N(x)\cup\{x\}$ with $a_1$ or $a_2$.

\smallskip
Now consider the effect of contracting the configuration induced by $N(x)\cup\{x\}$ into a single vertex.

The vertices $a_1$ and $a_3$ each have at most three neighbors in $N(x)\cup\{x\}$, and since they have degree at least $6$, each of them has at least three neighbors outside the configuration. Moreover, these external neighbors are distinct: indeed, the edge $a_2a_4$ separates the regions containing the external neighbors of $a_1$ and $a_3$, so $a_1$ and $a_3$ cannot share an external neighbor without creating a forbidden crossing. Hence, after contraction, each of $a_1$ and $a_3$ loses at most three edges inside the configuration but gains at least three new neighbors through the external neighbors of the other vertex. Therefore, their degrees do not decrease.

Now consider $a_2$ and $a_4$. Each of them has four neighbors in $N(x)\cup\{x\}$, so after contraction each loses at most four edges inside the configuration. On the other hand, the vertices $a_1$ and $a_3$ together contribute six distinct external neighbors. Since $a_2$ and $a_4$ can share at most two of these six neighbors, each of $a_2$ and $a_4$ gains at least four new neighbors after the contraction. Hence, their degrees also do not decrease.

Therefore, after contracting the whole configuration, no vertex in $D_0$ loses degree. Hence, condition c) holds.

\medskip
This completes Case~$III$.

    \medskip
\noindent

\textbf{Case $IV$.} There exists a path $P$ in $D_0$ joining neighbors of $x$.

Note that we always mean a path $P$ avoiding vertices of $N[x]$.

We further divide this case according to whether there exists a path contained entirely in $D_0$ joining two consecutive vertices in the cyclic ordering around $x$ and one that has only paths connecting non-consecutive vertices in the cyclic ordering around $x$.

\textbf{Case $IVa$.} $P$ joins two consecutive neighbors of $x$.

Without loss of generality assume that
\[
a_1a_2,\ a_3a_4 \in M.
\]
By \Cref{clm:adjacent_path_implies_adjacent_alternating}, the existence of the path implies that there exists an $M$-alternating path between two adjacent vertices among $a_1,a_2,a_3,a_4$ whose internal vertices avoid $N[x]$.

We distinguish two cases.

\smallskip
\noindent
\textbf{Case 1.} There exists an $M$-alternating path between two consecutive vertices that are matched together, say between $a_1$ and $a_2$.

Since $a_1a_2\in M$, this alternating path starts and ends with edges not in $M$. Together with the edge $a_1a_2$, it forms an $M$-alternating cycle $C$ contained entirely in the factor-critical component of $x$.

Define a new matching
\[
M' := M \triangle C.
\]
This switching is performed entirely within the factor-critical component of $x$ and preserves maximality of the matching. After the switch, the edge $a_1a_2$ is no longer in the matching, and $x$ becomes adjacent to two vertices that are not matched together. Thus, the configuration at $x$ reduces to a contractible case (specifically Case~4B).

\smallskip
\noindent
\textbf{Case 2.} There exists an $M$-alternating path between two consecutive vertices that are not matched together, say between $a_2$ and $a_3$.

Let $P$ be such a path. Since $a_2$ and $a_3$ are matched to $a_1$ and $a_4$, respectively, the path $P$ starts and ends with edges not in $M$. Consider the path
\[
P' := a_1a_2  P  a_3a_4,
\]
which is an $M$-alternating path whose endpoints are $a_1$ and $a_4$.

Define
\[
M' := M \triangle P'.
\]
After this switching, the vertices $a_1$ and $a_4$ become unmatched.

If $a_1a_4 \in E(G)$, then we add this edge to $M'$ and obtain a matching in which $x$ is reduced to a contractible case.

If $a_1a_4 \notin E(G)$, we add either $a_1x$ or $a_4x$ to $M'$ just to preserve the maximum matching, furthermore we also add both edges into our set $F$, which always copies the matching after any change to it. In the contracted wheel augmented graph $H(G,F)$, any proper coloring yields a conflict-free coloring of $G_x$: the vertices $a_1$ and $a_4$ have a uniquely colored neighbor in $x$, and $x$ has a uniquely colored neighbor among $\{a_2,a_3\}$, since these vertices are adjacent (by the wheel) and hence receive distinct colors.

All these modifications are illustrated in \Cref{fig:4C_path_matched} and \Cref{fig:4C_path_non_matched}.
\end{toappendix}

\begin{toappendix}

\smallskip

In both cases, the configuration at $x$ is either resolved by the contractions and satisfies condition~b) or reduced to a contractible case again satisfying condition~b). If $x$ was in $U$, this would contradict the minimality of $U$, hence existence of path $P$ joining two consecutive vertices in the cyclic ordering around $x$ implies that $x \notin U$.

\textbf{Case $IVb$.} $P$ joins two non-consecutive neighbors of $x$.

Without loss of generality, suppose that there is a path in $D_0$ joining $a_1$ and $a_3$.

The remaining pair $a_2,a_4$ is not joined by such a path. Indeed, the vertices $a_2$ and $a_4$ lie in different faces separated by the cycle formed by $a_1,x,a_3$ and the path from $a_3$ to $a_1$. Hence any path between $a_2$ and $a_4$ would have to meet this cycle. This would give a path between two consecutive neighbors of $x$ in the cyclic ordering, contrary to the present case.

By the same argument as in \Cref{clm:adjacent_path_implies_adjacent_alternating}, the path between $a_1$ and $a_3$ inside $D_0$ implies either that there is an even $M$-alternating path between $a_1$ and $a_3$, or that two consecutive neighbors of $x$ in the cyclic ordering are joined by an $M$-alternating path avoiding $N[x]$. The latter is excluded in the present case. Hence there is an even $M$-alternating path inside $D_0$ from $a_1$ to $a_3$. Switching along this path produces a matching $M'$ in which the vertices $x,a_2,a_4$ are unmatched, while $a_1$ and $a_3$ are matched to vertices outside $N(x)$.

We first note that there is no edge $a_2a_4$. Indeed, if such an edge existed, the edge $a_2a_4$ would cross the cycle formed by $a_1,x,a_3$ and the path from $a_3$ to $a_1$, contradicting planarity.

Now, if we contract $a_2,x,a_4$ any proper coloring assigning $a_1$ and $a_3$ a distinct colors lifts to a valid conflict-free coloring of the $G_x$. Indeed, $x$ has a unique color in either $a_1$ or $a_3$, $a_2$ and $a_4$ have a unique color in $x$, and all the matched vertices retain their unique color.

Thus, it remains to consider the case where $a_1$ and $a_3$ receive the same color. If $a_1$ and $a_3$ lie on a common face of the contracted graph, then we can add the edge $a_1a_3$ in that face and thereby force them to receive distinct colors in a proper coloring. Hence, we may assume that no such edge can be added.

We use the following elementary planar observation. Let
\[
C:=P\cup xa_1\cup xa_3 .
\]
Since the neighbors of $x$ occur in the cyclic order $a_1,a_2,a_3,a_4$, the cycle $C$ separates $a_2$ from $a_4$. Let $R_2$ and $R_4$ denote the two closed regions bounded by $C$, chosen so that $a_j\in R_j$ for $j=2,4$.

Consider one of these regions, say $R_j$. If there is no path in $R_j$ from $a_j$ to an internal vertex of $P$, then the side of $P$ facing $R_j$ is free for the augmentation: the edge $a_1a_3$ can be drawn in $R_j$, sufficiently close to $P$, without crossing the drawing of $G$. Indeed, if such an arc could not be drawn, then some connected part of the drawing in $R_j$ would separate the side of $P$ from the corner of $R_j$ containing the edge $xa_j$. Thus some connected component of this part of the drawing meets both $a_j$ and $V(P)\setminus{a_1,a_3}$. This component contains a path from $a_j$ to an internal vertex of $P$.

Consequently, if the edge $a_1a_3$ cannot be added on either side of $P$, then for each $j\in{2,4}$ there exists a path $Q_j$ from $a_j$ to $V(P)\setminus{a_1,a_3}$. We choose $Q_j$ minimal, so that it meets $P$ only in its final vertex.

Moreover, $Q_j$ cannot be contained entirely in $D_0$. For if, say, $Q_2\subseteq D_0$, then $Q_2$, together with one of the two sub-paths of $P$ from its endpoint to $a_1$ or to $a_3$, would give a path in $D_0$ from $a_2$ to a consecutive neighbor of $x$, with internal vertices avoiding $N[x]$. This is excluded by the present case. Hence, each $Q_j$ must leave $D_0$, and therefore it meets $A(G_x)\subseteq A(G)$.

Thus, after the wheel augmentation at $x$, the cycle $P\cup xa_1\cup xa_3$, together with the two paths $Q_2,Q_4$, gives the required gadget of type~\textbf{(III)}.

This construction is illustrated in \Cref{fig:xmatched_four_all}.

Notice first that all changes to $F$ and changes of matching used above are performed inside the factor-critical component under consideration. Hence they do not affect the auxiliary graphs or configurations associated with other unmatched vertices. In particular, all already resolved unmatched vertices outside the present component retain their unique-colored witnesses, and the resolutions obtained from \Cref{lem:controlled} and \Cref{lemma:case2B_full} remain valid.

Conversely, the recolorings used in the Kempe-contractible cases and the resolutions described in \Cref{lemma:case2B_full} do not interfere with the arguments above, since they are either confined to their own factor-critical components or only perform a permutation of the colors. Therefore, once a resolvable Case~4C configuration is fixed, it remains compatible with the other resolution methods used so far, and it stays resolvable after recolorings or matching switches performed in the other fixed cases. 

Thus, we can either change the matching and find a desirable set of edges $F$ (same as current matching on all edges except the neighborhood of some vertices representing either Case~2B or Case~4C) to obtain a valid conflict-free coloring satisfying condition b) or obtain a property satisfying condition c). Note that all other unmatched vertices not representing Case~4C can be argued the same way as in \Cref{lem:controlled} and \Cref{lemma:case2B_full}.

\end{proof}

\subsection{Density of factor-critical components and consistency of switching}

In this section we take a closer look at the consequences of our analysis of the difficult cases. Suppose that $x$ is an unmatched vertex representing one of the difficult configurations. By the lemmas above, one of the following happens: either the configuration can be resolved using one of the techniques we have shown, or it produces one of the desired gadgets, or the corresponding closed neighborhood can be contracted without decreasing any degree, while producing a vertex of degree at least $6$.

For the moment, ignore the last possibility, namely the safe-contraction outcome. Then every difficult configuration which is not resolvable produces a gadget. In particular, if every switchable vertex in $G_x$ is either Kempe-contractible with four disjoint paths to distinct vertices of $A(G_x)$, or represents a difficult case which cannot be resolved by the techniques developed above, then every relevant factor-critical component contains one of the desired gadgets. By \Cref{lemma_gadgets}, this cannot happen in a planar graph. Hence at least one switchable vertex must be contractible, resolvable by the techniques introduced above, or belong to the remaining safe-contraction case.

It remains to deal with this last possibility where the difficult cases admit the special contractions. We want to show that it does not give a new obstruction: if all unresolved difficult vertices in a factor-critical component only lead to safe contractions, then the whole component contains one of the desired gadgets. Equivalently, such a component either contains a gadget, or it contains a vertex which can be resolved by the techniques already developed. This is the content of the following lemma.

\begin{lemma}
\label{lem:usecase_of_density}
Let $G$ be a planar graph. There exists a maximum matching $M$ of $G$ and a subset $U\subseteq V(G)\setminus V(M)$ of unmatched vertices such that the following hold.
Let $W=V(G)\setminus(U\cup V(M))$ be the (independent) set of unmatched vertices complementary to~$U$.
With respect to $M$, let $G_W$ denote the union of the subgraphs $G_x$ for $x\in W$ and $W^* \subseteq W$ be the set of the vertices of~$W$ in a difficult configuration.
Let $E^*=E[N(W^*)]$ denote the set of edges induced by the neighborhood of $W^*$, and let $D^\#$ be the union of the factor-critical components of vertices in $W^*$ representing Case~4C.
\begin{enumerate}[a)]
     \item Every vertex of $V(G)\setminus V(M)$ is of degree at most~$5$ in~$G$ (cf.~\Cref{cor:lowdeg_free}).
     \item There is a maximum matching $M'$ that differs from $M$ only locally within $D^\#$, and a set of edges $F\subseteq E(G)$ such that $F\cap(E(G_W)\setminus E^*)=M'\cap(E(G_W)\setminus E^*)$, let $H=H(G,F)$ be the corresponding contracted graph and $H'$ be the wheel augmented graph of $H$ at~$W$.
    Then every proper $4$-coloring $c_1$ of $H'$ admits a $U$-protecting recoloring to a proper $4$-coloring $c$ of $H'$ such that,
    denoting by $c'$ the $4$-coloring of $G$ into which the coloring $c$ lifts (cf.~\Cref{obs:perfect}),
    every vertex $x\in W\cup N(W^*)$ has a neighbor $u$ in~$G$ whose color $c'(u)$ is unique in the open neighborhood~$N(x)$.
    
    \item For any $x \in U$, let $D_0$ be the factor-critical component containing $x$. Then either $D_0$ contains a gadget as a subgraph or $x \notin U$.
\end{enumerate}
\end{lemma}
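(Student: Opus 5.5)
We take the same choice of $M$ as in \Cref{lem:controlled}, \Cref{lemma:case2B_full} and \Cref{lemma:case4C_full}: among all maximum matchings whose unmatched vertices have degree at most~$5$ (\Cref{cor:lowdeg_free}), choose one minimising the set $U$ of bad unmatched vertices. Conditions~a) and~b) then carry over verbatim from \Cref{lemma:case4C_full}. Matching switches are confined to the factor-critical component $D_0$, and $F$-modifications are confined to $N[x]$ for Case~2B and Case~4C vertices. By \Cref{lem:Dx_disjoint_Gu} and \Cref{lem:switch_preserves_other_local_configs}, these local resolutions do not interfere with one another. Hence the only new work is condition~c).

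Fix $x\in U$, and suppose that $D_0$ contains no gadget. We derive a contradiction with minimality of $U$. Every $z\in S(x)$ of degree at most~$5$ is, after switching from $x$ to $z$, bad. Since $D_0$ contains no gadget of type~(I), no $z$ is a Kempe-contractible case with four disjoint paths to $A(G)$. So every low-degree vertex $z$ of $D_0$ (all of $V(D_0)$ lies in $S(x)$ by \Cref{lem:alternating_components}) is, after switching, in Case~2B or Case~4C. Ruling out outcomes (ii)--(iv) of \Cref{lemma:case4C_full}\,c), such a $z$ satisfies outcome~(i): $N[z]$ admits a safe contraction to a vertex of degree at least~$6$.

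In particular, vertices of $D_0$ have degree $2$, $4$, or at least~$6$. A degree-$3$ or degree-$5$ vertex would switch to a contractible or Kempe-contractible case, contradicting minimality. Low-degree vertices have all their neighbours in $D_0$, because Case~2B and Case~4C have matched pairs inside $N(z)$, and by \Cref{obs:neighbors_of_x} these lie in $D_0$.

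We now verify the hypotheses of \Cref{lem:reduce_low_degree_component} with $B=V(D_0)$ and $A=A(G_x)$. Handle-freeness holds by the standing assumption after \Cref{obs:handles}. Reducibility items~2 and~3 are exactly outcome~(i). Item~1, the absence of a path in $G[B]-v$ between two neighbours of $v$, is what \Cref{lem:case2B_no_long_path} and \Cref{clm:adjacent_path_implies_adjacent_alternating} give: such a path would yield an alternating path, a switch to a contractible case, and a contradiction. For Case~4C with non-consecutive endpoints, the Case~$IVb$ argument gives either a resolution or gadget~(III).

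The local overlap condition is the step I expect to be the main obstacle. For adjacent low-degree $u,v\in B$, I would argue that they lie in a common Case~2B/4C local structure around $u$. Their common neighbours are then the matched partner pattern, and an even overlap (that is, $0$ or $2$) would either give a path avoiding $N[u]$ or allow switching to Case~4B or Case~2A.

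With the hypotheses verified, \Cref{lem:reduce_low_degree_component} produces gadget~(I) or~(II) as a minor with branch sets inside $D_0$. This contradicts the assumption that $D_0$ contains no gadget. Therefore either $D_0$ contains a gadget or $x\notin U$.

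The final remark connects this to planarity. Applied to all $k+1=|A(G_x)|+1$ components of $G[D(G_x)]$ (\Cref{clm:Gx_structure}), these gadgets would contradict \Cref{lemma_gadgets}. That global use, however, belongs to the completion of \Cref{thm:main}, not to this lemma.
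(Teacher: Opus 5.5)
Your proposal is correct and follows essentially the paper's route. Minimality of $U$ rules out contractible and Kempe-contractible configurations (or produces gadget~\textbf{(I)}), so every vertex of $D_0$ of degree at most $5$ is a difficult case admitting a safe contraction; then $D_0$ together with $A(G_x)$ satisfies the hypotheses of \Cref{lem:reduce_low_degree_component}, which yields gadget~\textbf{(I)} or~\textbf{(II)}. You verify those hypotheses more explicitly than the paper, which simply asserts them, and your sketch of the overlap condition does go through: the $M$-partner $w\in N(u)$ of $v$ is always a common neighbour, and an overlap of exactly $2$ would make $v$ a Case~4B vertex after switching along $u,w,v$.
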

\begin{proof}
    Assume that $x \in U$, then $x$ cannot be contractible. Indeed, in that case we could simply remove $x$ from $U$, contradicting the minimality of $|U|$.

    If $x$ is Kempe-contractible, then either it is colorable as in \Cref{lem:controlled}, in which case we again contradict the minimality of $|U|$, or there exist four disjoint paths from the neighbors of $x$ to four distinct vertices of $A(G_x)$. In the latter case we obtain gadget~\textbf{(I)}.
    
    Suppose next that $x$ represents one of the difficult cases. Then the techniques developed in the lemmas prior either resolve it directly, or reduce it to one of the previous cases. In the first case we again contradict the minimality of $|U|$. In the second case we either obtain the same contradiction, or obtain gadget~\textbf{(I)}. If neither of these possibilities occurs, then by \Cref{lemma:case2B_full} and \Cref{lemma:case4C_full} we can obtain a minor of one of the desired gadgets, and hence the required conclusion follows.
    
    It remains to consider the last possible case. Namely, every vertex of $D_0$ either has degree at least $6$, or represents one of the difficult cases whose closed neighborhood can be contracted without decreasing the degree of any vertex of $D_0$ and such that the resulting vertex has degree at least $6$. In this case, $D_0$ together with $A(G_x)$ satisfies all assumptions of \Cref{lem:reduce_low_degree_component}. Indeed, there are no paths inside $D_0$ between the relevant neighbors of our degree-$2$ or degree-$4$ vertices, vertices of degree $2$ and $4$ are not adjacent to vertices of $A(G_x)$, and any two adjacent vertices of degree $2$ or $4$ have exactly $1$ or $3$ common neighbors. Hence, by applying \Cref{lem:reduce_low_degree_component}, we obtain that $D_0$ contains either gadget~\textbf{(I)} or gadget~\textbf{(II)}.

\end{proof}

We also prove a consistency property for switching operations. Namely, let $U$ be the set of unmatched vertices chosen as in the lemma above and in the preceding arguments. If we switch an unmatched vertex $x\in U$ to some vertex $z\in S(x)$, then the resulting maximum matching does not force any already resolved, fixed, unmatched vertex to enter $U$.

\begin{lemma}
\label{lem:consistency_full}
Let $G$ be a planar graph. There exists a maximum matching $M$ of $G$ and a subset $U\subseteq V(G)\setminus V(M)$ of unmatched vertices such that the following hold.
Let $W=V(G)\setminus(U\cup V(M))$ be the (independent) set of unmatched vertices complementary to~$U$.
With respect to $M$, let $G_W$ denote the union of the subgraphs $G_x$ for $x\in W$ and $W^* \subseteq W$ be the set of the vertices of~$W$ in a difficult configuration.
Let $E^*=E[N(W^*)]$ denote the set of edges induced by the neighborhood of $W^*$, and let $D^\#$ be the union of the factor-critical components of vertices in $W^*$ representing Case~4C.
\begin{enumerate}[a)]
     \item Every vertex of $V(G)\setminus V(M)$ is of degree at most~$5$ in~$G$ (cf.~\Cref{cor:lowdeg_free}).
     \item There is a maximum matching $M'$ that differs from $M$ only locally within $D^\#$, and a set of edges $F\subseteq E(G)$ such that $F\cap(E(G_W)\setminus E^*)=M'\cap(E(G_W)\setminus E^*)$, let $H=H(G,F)$ be the corresponding contracted graph and $H'$ be the wheel augmented graph of $H$ at~$W$.
    Then every proper $4$-coloring $c_1$ of $H'$ admits a $U$-protecting recoloring to a proper $4$-coloring $c$ of $H'$ such that,
    denoting by $c'$ the $4$-coloring of $G$ into which the coloring $c$ lifts (cf.~\Cref{obs:perfect}),
    every vertex $x\in W\cup N(W^*)$ has a neighbor $u$ in~$G$ whose color $c'(u)$ is unique in the open neighborhood~$N(x)$.

    \item Switching $x$ to $z$ does not increase $|U|$.

\end{enumerate}
\end{lemma}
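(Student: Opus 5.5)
We take the same minimal choice as in \Cref{lem:controlled}, \Cref{lemma:case2B_full} and \Cref{lemma:case4C_full}. Among all maximum matchings of $G$ whose unmatched vertices all have degree at most~$5$ (\Cref{cor:lowdeg_free}), we pick $M$ so that the set $U$ of bad unmatched vertices is minimal. Conditions a) and b) are then inherited verbatim from \Cref{lemma:case4C_full} together with \Cref{lem:usecase_of_density}, since the matching $M$, the sets $W$, $W^*$, $E^*$, $D^\#$, and the edge set $F$ are all chosen identically. The only new content is c). So we fix $x\in U$ and $z\in S(x)$, let $P$ be an even $M$-alternating $x$--$z$ path, set $M_1:=M\triangle E(P)$, and compare the bad sets $U(M)$ and $U(M_1)$.

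The key step is to show that $U(M_1)\setminus\{z\}\subseteq U(M)\setminus\{x\}$, i.e., that the status of every unmatched vertex $u\neq x$ is unchanged. By \Cref{lem:mono} and \Cref{cor:Gu_in_Gx}, the path $P$ lies in $G_x$. By \Cref{lem:Dx_disjoint_Gu}, the component $D_u$ of $G[D(G)]$ containing $u$ is disjoint from $V(G_x)$. Hence \Cref{lem:switch_preserves_other_local_configs} applies: the matching on all edges meeting $D_u$ is unchanged, the neighbors of $u$ keep their matched-inside or matched-outward pattern, and their membership in $A(G)$ or $D_u$ is unchanged. The degree of $u$ is likewise untouched. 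Therefore the configuration type of $u$ (Cases 1A--5C) is the same under $M$ and $M_1$.

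It remains to check that the extra qualifiers in the definition of ``bad'' depend only on data preserved by the switch. For the Kempe-contractible cases, the qualifier is the existence of $4$ disjoint paths from $N(u)$ to $A(G)$; this depends only on $G$ and on the matching-independent set $A(G)$. For Case~2B and Case~4C, the resolvability criteria used in \Cref{lemma:case2B_full} and \Cref{lemma:case4C_full} depend on two things: the degrees of the neighbors of $u$, and the existence of (alternating) paths inside $D_u$ between neighbors of $u$. Both are unchanged, because $D_u$ and the matching on it are unchanged. Thus $u\in U(M)$ if and only if $u\in U(M_1)$, for every $u\neq x,z$.

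Finally, we have $U(M_1)\subseteq (U(M)\setminus\{x\})\cup\{z\}$, so $|U(M_1)|\le |U(M)|$, which proves c). The main obstacle is the Case~4C qualifier. In that case, \Cref{lemma:case4C_full} may itself have modified the matching locally inside $D^\#$, and we must make sure such a modification at another vertex $u$ is not invalidated by the switch at $x$. We handle this by noting that these modifications are confined to $D_u$, which $P$ avoids. The alternative switch of c), along a cycle $Q$ inside $D_x$, is covered by the same argument, since $Q\subseteq D_x\subseteq S(x)$.
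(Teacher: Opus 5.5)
Your proposal is correct and follows essentially the same route as the paper. Both arguments use \Cref{lem:Dx_disjoint_Gu} and \Cref{lem:switch_preserves_other_local_configs} to show that the configuration of every other unmatched vertex $u$ is unchanged by the switch, then check that the extra qualifiers are unaffected (the four disjoint paths go to the matching-independent set $A(G)$, and the Case~2B/4C resolutions live inside $D_u$), so no vertex other than possibly $z$ can enter $U$. Your explicit inclusion $U(M_1)\subseteq (U(M)\setminus\{x\})\cup\{z\}$ just makes the paper's final count precise; the only slip is a minor citation, since $P\subseteq G_x$ follows directly from the definition of $S(x)$ (even-indexed vertices of $P$ lie in $S(x)$, odd-indexed ones are their neighbours) rather than from \Cref{lem:mono} or \Cref{cor:Gu_in_Gx}.
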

\begin{proof}
    Since $U$ contains only unmatched vertices, we consider why each possible type of unmatched vertex cannot be introduced into $U$ by switching from $x$ to $z$ and hence changing the matching. Suppose for a contradiction that $u$ is introduced into $U$ as a result of the switch. By \Cref{lem:switch_preserves_other_local_configs}, the configurations around unmatched vertices are preserved. Hence, if $u$ represents a contractible configuration, then it remains contractible with respect to $M'$, and therefore remains outside $U$.
    
    Suppose next that $u$ represents a Kempe-contractible configuration. Since $u$ was not originally in $U$, by \Cref{lem:controlled} it does not contain four disjoint paths to vertices of $A(G_u)$. Since the switch does not change the factor-critical component of $u$ and does not create or delete edges, it cannot create such disjoint paths. Hence $u$ cannot be introduced into $U$, because otherwise we could still apply the Kempe-chain recoloring argument from the proof of \Cref{lem:controlled}, contradicting the minimality of $|U|$.
    
    Suppose that $u$ represents Case~2B, then from the case analysis in \Cref{lemma:case2B_full}, we know that this can happen only if we can device such $F$, that it omits the edge between its neighbors. However, by switching the matching from $x$ to $z$, we did not modify the factor-critical component containing $u$. Hence, the degree and matching of the neighborhood of the neighboring vertices of $u$ stays the same and as such the same resolution of not including the edge into $F$ still applies.

    Finally, suppose that $u$ corresponds to Case~4C. From the case analysis in \Cref{lemma:case4C_full} we know that this can only happen if there is $M'$ differing from $M$ only within the factor-critical of $u$ and $F$ such that the special resolution from \Cref{lemma:case4C_full}, where we added adjacent edges to $F$ applies. However since all these operations are contained within the factor-critical component of $u$, they can be performed even after the switch from $x$ to $z$, hence $u$ remains outside of $U$.
    
    Since we have considered all possible configurations of $u$, and none of them can belong to $U$ after the switch, we have shown that the size of $U$ does not increase.
\end{proof}

Intuitively, we have shown that once a case is resolvable, switching in another part of the graph does not destroy this resolution. Thus already fixed resolvable cases remain fixed under the switching operations used for other unmatched vertices.

\end{toappendix}

\section{Completing the main proof}\label{sec:completing}

We now assemble all previous ingredients into a proof of the main result.

\begin{theorem}[Combinatorial side of \Cref{thm:main}]
\label{thm:main1}
Every planar graph $G$ admits an open-neighborhood conflict-free coloring with at most $4$ colors.
\end{theorem}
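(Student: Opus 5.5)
The plan is to assemble the ingredients: reduce to a clean instance, pick the matching and bad set $U$ given by \Cref{lem:controlled} (in its refined forms \Cref{lemma:case2B_full}, \Cref{lemma:case4C_full}, \Cref{lem:usecase_of_density}), show that $U=\varnothing$ by planarity, and then lift a Four-Color-Theorem coloring.

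First, the preliminary reductions. Isolated vertices impose no constraint, so they are deleted and colored arbitrarily at the end. Handles are removed iteratively by \Cref{obs:handles}, which lets us reinsert them afterwards with their colorings extended. So we may assume $G$ is planar, has no isolated vertices, and has no handles. Now take the maximum matching $M$ and the minimal bad set $U$ as in \Cref{lem:controlled}. By \Cref{cor:lowdeg_free}, every unmatched vertex has degree at most $5$. For a general $G$ we also need \Cref{lem:consistency_full}, so that the minimality of $|U|$ is stable under switches performed elsewhere.

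The core step is to prove $U=\varnothing$. Suppose $x\in U$ and consider $G_x$. By \Cref{clm:Gx_structure}, $G[D(G_x)]$ has exactly $|A(G_x)|+1$ components $D_1,\dots,D_{k+1}$, and $A(G_x)\subseteq A(G)$. The key observation is that each $D_i$ behaves like the component of $x$ after switching. For $D_i\ne D_0$, pick the vertex $y_i\in D_i$ matched into $A(G_x)$. Switching along an alternating path from $x$ through $A(G_x)$ into $D_i$ makes some vertex of $D_i$ unmatched, and by \Cref{lem:mono} it stays within $S(x)$.

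By \Cref{thm:deg5} applied inside $D_i$, together with condition c) of \Cref{lem:controlled}, every low-degree switchable vertex of $D_i$ must again be bad. We then apply \Cref{lem:usecase_of_density} (and \Cref{lem:reduce_low_degree_component} in the safe-contraction situation) to each component $D_i$. This shows that each $D_i$ contains, as a minor whose $B$-side lies in $D_i$, a gadget of type (I), (II) or (III) attached to $A(G_x)$. Contracting each $D_i$ accordingly and deleting the rest yields a minor $G_1$ of the planar graph $G_x$ with $|A|=k$ and $k+1$ disjoint gadgets. By \Cref{lemma_gadgets}, $G_1$ is nonplanar, a contradiction. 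Hence $U=\varnothing$ and $W$ is the set of all unmatched vertices.

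I expect this step to be the main obstacle. The difficulty is justifying that every component $D_i$, not only the one containing $x$, is forced to produce a gadget. This requires that switching into $D_i$ keeps the configuration classification valid and does not change $A(G_x)$. I would argue it through \Cref{lem:switch_preserves_other_local_configs} and the fact that $A$-sets are matching-independent.

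For the final coloring, fix the edge set $F$ and the possibly locally modified matching $M'$ from condition b). Form $H=H(G,F)$ and the wheel augmentation $H'$ at $W$. $H'$ is planar, by \Cref{lemma_wheel_can} together with the planarity-preserving edge additions in the difficult cases. Properly $4$-color it by \Cref{thm:FCT}, apply the recoloring from b), and lift the result to $G$. Unmatched vertices and the neighbors of difficult configurations get unique colors by b). Every other matched vertex $v$ sees its $F$-partner's color uniquely, exactly as in \Cref{obs:perfect}, because $F$ agrees with the matching away from $E^*$. This gives a CFON $4$-coloring.
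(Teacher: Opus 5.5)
Your proposal is correct and follows essentially the same route as the paper. Both arguments make a minimal choice of the unresolved set $U$, derive a contradiction for any $x\in U$ by extracting one gadget of \Cref{lemma_gadgets} from each of the $|A(G_x)|+1$ components of $G[D(G_x)]$ (every other outcome being a switch or local resolution that contradicts minimality, cf.\ \Cref{lem:consistency_full}), and then lift a Four-Color-Theorem coloring of the augmented contracted graph as in \Cref{obs:perfect}.

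Two small corrections. First, type~\textbf{(III)} gadgets only appear after wheel augmentation at the Case~4C vertex of their component; these vertices are pairwise non-adjacent, so the augmented graph stays planar, and $G_1$ is a minor of that augmented graph rather than of $G_x$. Second, \Cref{thm:deg5} does not give a low-degree vertex inside each $D_i$, so a component with all degrees at least $6$ must be handled directly by \Cref{lemma_6deg_neighbors}.
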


\begin{proof}
For start, we remove all isolated vertices of~$G$ (since \Cref{def:CFON} does not impose restriction on them).
Let $F_1\subseteq E(G)$ be an edge set defining the contracted graph $H_1=H(G,F_1)$, let $M_1\subseteq F_1$ be a matching (not necessarily maximum)
such that no edge of $M_1$ shares a vertex with an edge of $F_1\setminus M_1$, and~$U_1\subseteq V(G)\setminus V(M_1)$.
Assume that $c_1$ is a proper $4$-coloring of $H_1$ which lifts into a $4$-coloring $c_1'$ of $G$,
satisfying that every vertex of $V(G)\setminus(U_1\cup V(M_1))$ has a neighbor in~$G$ of a unique color in~$c_1'$.

First observe that if $U_1=\emptyset$, then $c_1'$ is a CFON $4$-coloring of~$G$.
Indeed, if $v\in V(M_1)$ and $w$ is the $M_1$-neighbor of~$v$, then the color $c_1'(w)=c_1'(v)$ is unique in the neighborhood of~$v$ in~$G$ since $c_1$ is proper in~$H_1$.
If, otherwise, $v\in V(G)\setminus V(M_1)$, then (since $U_1=\emptyset$) $v$ has a neighbor in~$G$ with a unique color in $c_1'$ by our assumption.

In order to find such $M_1$ and $U_1=\emptyset$ in given~$G$, we proceed as follows.
We start with fixing a maximum matching $M$ of the graph $G$ and a subset $U$ of unmatched vertices as in \Cref{lem:controlled},
which satisfy the previous initial conditions when setting~$M_1=F_1=M$ and~$U_1=U$ by \Cref{lem:controlled}\,b).
Let $W=V(G)\setminus(U\cup V(M))$ and $G_{W}$ denote (with respect to~$M$) the union of the subgraphs $G_x$ for $x\in W$.
Among all choices of a matching $M_1$ obtained from $M$ by a sequence of switchings avoiding~$E(G_W)$, 
a set $F_1\subseteq E(G)$, $F_1\supseteq M_1$ such that no edge of $M_1$ shares a vertex with an edge of $F_1\setminus M_1$, 
and $U_1\subseteq U$, we assume one such that:
$U_1$ is inclusion-minimal for which there is a proper $4$-coloring $c_1$ of~$H_1=H(G,F_1)$, 
obtained from an arbitrary proper $4$-coloring of $H_1$ by a $U$-protecting recoloring as in \Cref{lem:controlled}\,b),
such that $c_1$ satisfies the initial assumptions stated above.
Note that, in particular, $M_1\cap E(G_W)=M\cap E(G_W)=F_1\cap E(G_W)$. 

As sketched in \Cref{sec:difficult2} and detailed in the Appendix (\Cref{lemma:case2B_full} and \Cref{lemma:case4C_full}), for any vertex $x\in U_1\not=\emptyset$, one of the following three possibilities must hold.
\begin{itemize}
\item In every connected component $D_0$ of $G[D(G_x)]$ we obtain, by a possible independent wheel augmentation and suitable contractions restricted to $D_0$
(which both preserve planarity), one of the gadgets of \Cref{lemma_gadgets} (Appendix: \Cref{lem:usecase_of_density}).
Since there are $|A(G_x)|+1$ such gadgets by \Cref{clm:Gx_structure}, \Cref{lemma_gadgets} then contradicts planarity of~$G$.
\item Within one of the connected components of $G[D(G_x)]$ we can switch the matching $M$ to obtain one of the contractible or Kempe-contractible cases.
This contradicts the conclusions of \Cref{lem:controlled}\,c) or yields the gadget~\textbf{(I)} as covered by the previous point.
\item There is $z\in S(x)$ such that we switch the matching $M_1$ to $M_2$ along an $M_1$-alternating path from $x$ to $z$
(this does not interfere with already resolved/fixed vertices of $U$, in other words it does not enlarge $U_1$, cf.~\Cref{lem:consistency_full}),
and locally modify the contract set $F_1$ to $F_2$ within the component of $G[D(G_x)]$ containing~$z$.
The local configuration at $z$ is Case~2B or Case~4C, and as shown in the detailed analysis of these two cases,
in each case $M_2$ and $H_2=H(G,F_2)$ can be chosen to satisfy the initial assumptions of this argument for $U_2=U_1\setminus\{x\}$.
This contradicts minimality of~$U_1$.
\end{itemize}

Hence, indeed, $U_1=\emptyset$ and the proof is finished.
\end{proof}

\section{Conflict-free $4$-coloring algorithmically}\label{sec:algorithm}

The proof of \Cref{thm:main1}, as presented above, is itself almost entirely efficiently algorithmic. There are two points in this procedure that deserve comment in advance.

First, we start in \Cref{lem:controlled} with a specific maximum matching of the input graph~$G$, namely the one of \Cref{cor:lowdeg_free}, and this is not simply provided by the algorithm of \Cref{thm:blossom}. Instead, we will use a two-step self-reduction approach.

Second, our proof relies on excluding the possible gadgets of \Cref{lemma_gadgets} before trying to color the neighborhood of an unmatched vertex.
In the algorithm, we instead blindly try the modifications and recolorings anticipated by our proof at the unmatched vertices, and either find an admissible resolution, or mark this case as yielding a gadget (which we do not need to know explicitly).
This paradigm shift leads to necessity to re-iterate some of the proof steps in the algorithm, such as the step of proper $4$-coloring of a contracted graph.

In view of the previous remarks, we assert:

\begin{theorem}[Algorithmic side of \Cref{thm:main}]
\label{thm:main2}
An open-neighborhood conflict-free $4$-coloring of an input planar graph $G$ can be found in polynomial time.
\end{theorem}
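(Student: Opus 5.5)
The plan is to turn the existential proof of \Cref{thm:main1} into a procedure that never needs to detect gadgets. Each step either produces an admissible resolution or certifies, implicitly, that no resolution of that type exists.

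\emph{Step 1: preprocessing and the matching of \Cref{cor:lowdeg_free}.} Remove isolated vertices. Then iteratively remove handles, which can be found in linear time; by \Cref{obs:handles} they are reinserted at the end, with their colors chosen greedily. Compute a maximum matching $M$ by \Cref{thm:blossom}. While some unmatched $x$ has $\deg(x)\ge 6$:
\begin{itemize}
\item compute $S(x)$ by an alternating-BFS from $x$, which is the standard blossom-forest search and runs in polynomial time;
\item pick $v\in S(x)$ with $\deg(v)\le5$, which exists by \Cref{thm:deg5};
\item switch along the corresponding even alternating path.
\end{itemize}
The potential $\Phi$ from the proof of \Cref{cor:lowdeg_free} strictly decreases and is at most $2|E|$, so this takes polynomially many rounds. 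Each switch leaves the other unmatched vertices untouched, by \Cref{clm:Gx_structure}.

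\emph{Step 2: simulating the minimal choice of $U$.} The proofs choose $M$ with an inclusion-minimal set $U$ of bad vertices. The algorithm instead performs a local-improvement loop. For every unmatched $x$ and every $z\in S(x)$ with $\deg z\le5$, we try two kinds of moves:
\begin{itemize}
\item switching along an $x$--$z$ alternating path;
\item switching an alternating cycle in the component of $G[D(G)]$ containing $x$. Only cycles arising from the constructions in the proofs are needed: the paths $P'$ and $P''$ between neighbours of $x$ built in \Cref{lem:case2B_no_long_path} and \Cref{clm:adjacent_path_implies_adjacent_alternating}, which are found by BFS.
\end{itemize}
A move is accepted if $z$ becomes a contractible case, or a Kempe-contractible case whose Kempe exchange actually succeeds, or a difficult case admitting the special choice of $F$ (omitting $a_1a_2$ in Case~2B, or contracting $[a_1xa_4]$ or $[a_2xa_4]$ in Case~4C, with the corresponding extra edge). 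By \Cref{lem:consistency_full}, accepting a move never makes a previously resolved vertex unresolved. So the number of unresolved unmatched vertices strictly decreases with each accepted move, and at most $n$ moves are accepted. Each candidate test is polynomial because the candidate sets have size at most $n$ and the paths come from BFS.

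\emph{Step 3: colouring.} Build $F$, $H=H(G,F)$ and the wheel augmentation $H'$, and $4$-colour $H'$ by \Cref{thm:FCT} in quadratic time. Then process the unmatched vertices in Kempe-contractible configurations one by one, exchanging the Kempe chain $L_1$ or $L_2$ exactly as in the proof of \Cref{lem:controlled}. Each exchange is a $U$-protecting recoloring, so earlier vertices stay satisfied and at most $n$ linear-time exchanges occur. Finally, lift the colouring to $G$ and verify the CFON condition directly. If some vertex fails, it is one where the proof asserts that a gadget would arise. \Cref{lemma_gadgets} together with \Cref{lem:usecase_of_density} show that this cannot happen once Step 2 has stabilized, so the verification always succeeds. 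This is why no gadget ever needs to be constructed explicitly.

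The main obstacle I expect is Step 2: proving that the finitely many local moves the algorithm tries really cover every move used in the existential proof. The proof chose "some" alternating path or cycle, and the algorithm must find one. My approach would be to argue, for each difficult case, that the relevant path exists if and only if a BFS restricted to the component $D_0$ finds one. The termination potential (the number of unresolved vertices) is provided by \Cref{lem:consistency_full}.
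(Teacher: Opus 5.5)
Your overall plan matches the paper's: compute a low-degree maximum matching, try the local moves anticipated by the existential proof, $4$-color the wheel-augmented contracted graph, apply Kempe recolorings, lift, and rely on \Cref{lemma_gadgets} so that no gadget is ever built. Your Step~1 is also a valid alternative to the paper's initialization. You iterate the switching from the proof of \Cref{cor:lowdeg_free} under the potential $\Phi$, whereas the paper uses two black-box blossom calls: compute the defect $t$, add $t$ new vertices adjacent to all vertices of degree $\le5$, and restrict a perfect matching of this $G^+$ back to $G$.

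The genuine gap is in how your Steps~2 and~3 fit together. Your acceptance test ``a Kempe-contractible case whose Kempe exchange actually succeeds'' cannot be evaluated during Step~2. The chains $L_1,L_2$ are defined relative to a specific proper $4$-coloring of $H'$, and $H'$ (the set $W$, the wheel augmentations and $F$) is only fixed once Step~2 has stabilized. So your claim that the final verification ``always succeeds'' is unsupported. Suppose the coloring returned by \Cref{thm:FCT} defeats the exchange at some $x$. That certifies a gadget only in the component of $x$, whereas \Cref{lemma_gadgets} needs one gadget in each of the $|A(G_x)|+1$ components of $G[D(G_x)]$. Your procedure has no step that then goes back and tries the other $z\in S(x)$.

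The paper closes exactly this hole with a feedback loop. Phase~(iii) colors and checks. On failure at a potentially resolved $x$, it marks $x$ unresolvable, switches to another unmarked $y\in S(x)$ of degree at most~$5$, and returns to phase~(ii). This is why it may re-run the $4$-coloring up to $\mathcal O(n)$ times.

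You can instead keep your one-shot design by making Step~2 independent of the coloring. Accept a Kempe-contractible $z$ if and only if there are no four vertex-disjoint paths from $N(z)$ to $A(G)$ (the paper's definition of \emph{bad}). This is a max-flow test once the Gallai--Edmonds decomposition is known. The proof of \Cref{lem:controlled} shows that for such $z$ one of the two exchanges succeeds for \emph{every} proper $4$-coloring of $H'$. The special choices of $F$ in Cases~2B and~4C are already structural; for instance, the edge $a_1a_3$ is added when a common face exists. With this test your local optimum matches the minimality used in \Cref{lem:controlled}\,c), and the design becomes essentially the speed-up suggested in the paper's concluding remarks.

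Two smaller corrections:
\begin{itemize}
\item Previously satisfied vertices are not protected in Step~3 because the exchanges are $U$-protecting. $U$ concerns bad vertices and should be empty by then. They are protected because $L_1$ stays inside the contracted component $D_0'$ of $x$, which is disjoint from every $G_u$ with $u\neq x$, and by the $|V(P_1)\cap A(G)|\le1$ argument for $L_2$.
\item The paths $P',P''$ live in a non-bipartite graph, so finding them needs a blossom-aware alternating search rather than plain BFS (still polynomial).
\end{itemize}
The coverage issue you flag yourself is real, but the paper treats it no more precisely than ``local search within $G_x$''.
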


\begin{proof}
The algorithm works, on a high level, in three phases: (i) initialization of a matching, (ii) resolution of all unmatched vertices, and (iii) coloring and postprocessing.
As noted above, in some cases the algorithm can return from phase (iii) to phase (ii) with a new unmatched vertex.
\begin{enumerate}[(i)]\parskip3pt
\item In the initial phase, we first compute a matching $M$ as in \Cref{cor:lowdeg_free}. This is done by two calls to the algorithm of \Cref{thm:blossom}; first one computes any maximum matching and the number $t$ of unmatched vertices of $G$.
	Then, denoting by $G^+$ the graph obtained from $G$ by adding $t$ new vertices adjacent to all vertices of degree $\leq5$ in $G$, the second call computes a perfect matching $M^+$ of $G^+$ and sets $M$ to be the restriction of $M^+$ to~$G$.
	Subsequent computation of the Gallai--Edmonds decomposition of $G$ with respect to $M$ and of the components of $G[D(G)]$ can be done by local search in linear time.
\item In this phase we process, in a greedy order, all currently unmatched vertices $x$ (with respect to $M$) that are not yet marked as unresolvable.
	In each case of $x$, we determine the local configuration at $x$ and at the vertices switchable from~$x$ by a local search within~$G_x$. Then:
    \begin{itemize}\parskip0pt
	\item If the local configuration at $x=z$ is one of the contractible cases, or $x$ can be switched to $z$ being in a contractible case,
	then we annotate it as needing a wheel augmentation at $z$ and mark the vertex $z$ as resolved.
	\item Otherwise, if $x$ is one of the two Kempe-contractible cases, then we annotate this case with a wheel augmentation at $x$ and local modification steps anticipated by the proof of \Cref{lem:controlled}, and mark $x$ as potentially resolved.
	\item If $x$ is one of the two difficult cases (note that $x$ already cannot be switched to a contractible case),
	and the specific local modification steps anticipated by the proofs of these cases can be carried out there,
	then we annotate the case with these modification steps and again mark $x$ as potentially resolved.
	If not, we can conclude that the component of $G[D(G)]$ containing $x$ yields a gadget of \Cref{lemma_gadgets}, mark $x$ as unresolvable and continue the main cycle with switching to another unmatched vertex of degree at most~$5$.
    \end{itemize}
\item In the last phase, we first complete the previously annotated wheel augmentations and local modifications, and make the contracted graph $H$.
	We find a proper $4$-coloring of $H$ by the algorithm of \Cref{thm:FCT}, apply previously annotated recolorings at the vertices marked as potentially resolved, and lift the resulting coloring back to~$G$. 
	If we succeed, we stop the algorithm with this lifted coloring as the output.

	If the lifted coloring is not conflict-free at some `potentially resolved' vertex~$x$, then we again conclude that the component of $G[D(G)]$ containing $x$ yields a gadget of \Cref{lemma_gadgets} and mark $x$ as unresolvable.
	We switch the matching from $x$ to another vertex $y\in S(x)$ of degree at most~$5$ such that $y$ is not yet marked as unresolvable, and we return to phase (ii) with $y$.

	Since $G$ is planar, not every connected component of $G[D(G_x)]$ may yield a gadget of \Cref{lemma_gadgets}, and so we must eventually succeed in a resolved case above.
\end{enumerate}

The runtime of our algorithm is dominated by three parts; computing a maximum matching in a planar graph which is $\mathcal O(n\sqrt n)$ by \cite{MicaliV80};
the overall time for processing and postprocessing components of all inessential vertices of $G$, upper-bounded by $\mathcal O(n^2)$;
and the time needed to compute a $4$-coloring of a planar graph which is $\mathcal O(n\log n)$ by \cite{inoue2026colortheoremlinearlyreducible}, multiplied by up to $\mathcal O(n)$ iterations.
These sum to overall $\mathcal O(n^2\log n)$ runtime.

A complete pseudocode can be, due to space restriction, found in the Appendix.
\end{proof}

\begin{toappendix}

We add a complete pseudocode of our algorithm right below. 

In the algorithm below we distinguish the current matching $M$ from the contraction set~$F$, which defines the contracted graph $H(G,F)$. Initially $F=M$. During the algorithm, $M$ may be changed by switching along alternating paths, while $F$ is updated accordingly, except for the local modifications explicitly prescribed by the difficult cases.

We shall also use the term \emph{local resolution} for Cases~2B and~4C. This means one of the local transformations described in the proofs of \Cref{lemma:case2B_full} and \Cref{lemma:case4C_full}. More precisely, a local resolution is a modification supported inside the relevant factor-critical component of $G[D(G)]$, and it is of one of the following two types. Either we switch the matching locally so that the resulting unmatched vertex falls into one of the already contractible configurations, or we modify the contraction set $F$ locally by adding or deleting the prescribed contraction edges. In both cases the resulting contracted graph remains planar, previously fixed components are not affected, and the configuration is either resolved immediately or recorded for the final protected recoloring step.

\smallskip
\noindent\textbf{Algorithm \refstepcounter{algorithm}\thealgorithm. 
Constructing a CFON $4$-coloring of a planar graph}
\label{alg:CFON_planar}
\begin{algorithmic}[1]
\Require A planar graph $G$ without isolated vertices
\Ensure An open-neighborhood conflict-free coloring of $G$ with at most $4$ colors

\State Compute a planar embedding of $G$
\State Remove all handles from $G$ and store them for later reinsertion
\State Compute the size of a maximum matching of $G$ and the matching defect $t$
\State Compute a maximum matching $M$ of $G$ conforming to \Cref{cor:lowdeg_free}, using self-reduction based on the parameter~$t$
\State Compute the Gallai--Edmonds decomposition of $G$ with respect to $M$
\State Let $F:=M$. This set will define the final contracted graph.
\State Initialize $\mathcal U$ as the set of unmatched vertices still under consideration.
\State Initialize $\mathcal P:=\emptyset$ as the set of already processed unmatched representatives.
\State Initialize $\mathcal R:=\emptyset$ as the set of vertices scheduled for the final recoloring step.
\State Initialize the set of annotations describing local changes of $F$.

\While{$\mathcal U \neq \emptyset$}
\label{it:while1}
    \State Choose an unmatched vertex $x$ from $\mathcal U$

    \If{$x$ is in or can be switched into a contractible configuration}
        \State Switch $M$ along the $M$-alternating path.
        \State Update $F$ accordingly
        \State Remove $x$ from $\mathcal U$

    \ElsIf{$x$ is represented by Kempe-contractible case}
        \State Add $x$ to $\mathcal R$ and remove $x$ from $\mathcal U$

    \ElsIf{$x$ is represented by Case~2B or Case~4C and local resolution can be applied}
        \State Make appropriate local resolutions and annotate any contractions needed not implied by matching (in Case~4C) or any contractions that should not be performed (in Case~2B), update $F$ accordingly
        \State Remove $x$ from $\mathcal U$

    \Else{ ($x$ is represented by Case~2B or Case~4C and no local resolution can be applied)}
        \State Perform switch to another non processed vertex $v$ of degree less than $6$
        \State Update $F$ accordingly
        \State Remove $x$ from $\mathcal U$ and add $v$ to $\mathcal U$
    \EndIf

   \State Add $x$ to $\mathcal P$ (mark it as processed).
\EndWhile

\State Contract the components of $(V(G),F)$, respecting all annotations, and let $H:=H(G,F)$.
\State Make a wheel in $H$ around each vertex representing a contractible configuration
\State Compute a proper $4$-coloring of $H$
\State Lift this coloring of $H$ to the original graph

\For{each vertex $x \in \mathcal R$}

    \If{the lifted coloring is already conflict-free on $G_x$}
        \State Continue
    \EndIf
    
    \State Perform the Kempe-chain recoloring on $H$ as described in the proof of \Cref{lem:controlled}

    \If{the recoloring does not succeed}
        \State Conclude that the factor-critical component containing $x$ yields a forbidden gadget minor (\Cref{lem:controlled}, \Cref{lem:usecase_of_density})
        \State Discard the current contracted graph $H$
        \State Perform switch to another non processed vertex $v$ of degree less than $6$
        \State Update \(F\) accordingly
        \State add $v$ to $\mathcal U$ and remove $x$ from $\mathcal R$
        \State Go back to line \ref{it:while1}
    \EndIf
\EndFor

\State Reinsert the deleted handles and extend the coloring
\State \Return the resulting $4$-CFON coloring of $G$
\end{algorithmic}

Notice that if we try to switch to a non-processed vertex and are unable to do so, then we may conclude that the graph is not planar. Indeed, in that case all relevant vertices have already been processed but not resolved. Hence they satisfy condition c) of \Cref{lemma:case4C_full}, and therefore, by \Cref{lem:usecase_of_density}, all factor-critical components of $G_x$ contain a gadget.

\end{toappendix}

\section{Concluding remarks and questions}

We have shown that planar graphs are CFON $4$-colorable by lengthy technical case analysis, resolving an open question, and have given an efficient algorithm alongside.
The first natural question is how to simplify our analysis and, especially, streamline the flow of arguments.

Secondly it would be nice to improve the algorithm runtime a bit; perhaps by searching for the gadget obstructions first, which would eliminate the need for repeated coloring phase.

Another direction is to explore generalizations of our result.
More broadly, our method relies heavily on the interaction between planarity and the structure of factor-critical components. 
It is therefore natural to ask how far this approach extends beyond planar graphs. In particular, can \Cref{cor:lowdeg_free} be generalized to wider interesting graph classes?

From the complexity point of view, one can ask how difficult it is to CFON color a planar graph with $2$ or $3$ colors.
In \cite{AbelCF} the authors proved that \emph{partial} CFON coloring of planar graphs with $1$, $2$ or $3$ colors is \NP-hard, but it seems complicated to translate their result to the \emph{full} coloring case.
In the Appendix, we give a simple proof that CFON $3$-coloring of planar graphs is \NP-hard.

\begin{toappendix}

\begin{proposition}
The following problem is \NP-complete: Given a planar graph $G$, is there a CFON coloring of $G$ with $3$ colors?
\end{proposition}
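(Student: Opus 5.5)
Membership in \NP{} is immediate: a $3$-coloring is a certificate, and checking that every non-isolated vertex sees a color exactly once in its open neighborhood takes polynomial time. For hardness I will reduce from proper $3$-colorability of planar graphs, which is \NP-complete. The template is the lower-bound construction from the introduction: in the $1$-subdivision $G_1$ of a graph $G$, the subdivision vertex $s_{uv}$ of an edge $uv$ has exactly two neighbors $u$ and $v$. It sees a unique color if and only if $c(u)\neq c(v)$. So any CFON $3$-coloring of $G_1$ restricts to a proper $3$-coloring of $G$, and $1$-subdivision preserves planarity.

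The converse direction needs work. Given a proper $3$-coloring of $G$, the subdivision vertices are handled automatically. An original vertex $v$, however, sees the colors of its subdivision neighbors $s_{uv}$, and these are not controlled unless we choose them well. Assigning colors to the $s_{uv}$ so that every original vertex sees a unique color is itself a constrained problem. To make it trivial, I will attach to every original vertex $v$ a private pendant gadget that supplies $v$ with a guaranteed unique color, while ensuring every other neighbor of $v$ avoids that color.

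Concretely, I will take $G$ and, for each vertex $v$, first $1$-subdivide all original edges. Then I add a new vertex $p_v$ adjacent to $v$, together with a private neighbor $q_v$ of $p_v$ so that $p_v$ has its own witness. The $q_v$ being a leaf with unique neighbor $p_v$ is fine, since $q_v$ sees $p_v$ uniquely. Planarity is clearly preserved.

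For completeness I need the following choices. Give $p_v$ a color $\beta_v\neq c(v)$. Give every subdivision neighbor $s_{uv}$ of $v$ a color different from $\beta_v$. This makes $\beta_v$ unique at $v$. Each $s_{uv}$ must avoid both $\beta_u$ and $\beta_v$; with three colors this is always possible, and $s_{uv}$'s own condition holds since $c(u)\neq c(v)$. Then $p_v$ sees $v$ and $q_v$, so I set $c(q_v)\neq c(v)$, and $q_v$ sees only $p_v$.

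Soundness still holds because the subdivision vertices still have neighborhood exactly $\{u,v\}$, forcing $c(u)\neq c(v)$ on all original edges. The main obstacle is verifying that the gadget adds no unintended constraints. Since I only add pendant structure and the soundness argument uses nothing but degree-$2$ subdivision vertices, I expect it to go through directly.
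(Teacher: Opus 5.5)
Your reduction is correct and is essentially the paper's: $1$-subdivide to force $c(u)\neq c(v)$ on original edges, then attach pendant structure to each original vertex to supply it a unique color. The paper's version is slightly leaner. A single leaf per original vertex suffices, since that leaf's witness is $v$ itself, so your $q_v$ is unnecessary. It also colors all subdivision vertices $1$ and all leaves $2$ uniformly, avoiding your per-vertex choice of $\beta_v$.
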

\begin{proof}
Given a planar graph $H$, we construct $H_1$ by subdividing every edge of $H$ with a new vertex, and $H_2$ by adding a new leaf to every vertex of $V(H)$ in~$H_1$.
Then $H$ is properly $3$-colorable, if and only if $H_2$ is CFON $3$-colorable, which we prove now.

$\Rightarrow$: We take a proper $3$-coloring of $H$ and assign the same colors to the vertices of $V(H)$ in $H_2$.
Then we give color $1$ to each vertex of $V(H_1)\setminus V(H)$ in $H_2$, and color $2$ to every vertex of $V(H_2)\setminus V(H_1)$ in $H_2$.
This is clearly a CFON coloring.

$\Leftarrow$: Each vertex $x\in V(H_1)\setminus V(H)$ in $H_2$ can see a unique color in a coloring of $H_2$ if and only if the two neighbors of $x$ in $H_2$ have distinct colors.
Hence, if and only if the edge of $H$ which is subdivided by $x$ in $H_1$ receives two distinct colors.
Therefore, a CFON coloring of $H_2$ restricts to a proper coloring of~$H$.

The conclusion follows since proper $3$-colorability of planar graphs is \NP-complete.
\end{proof}

\end{toappendix}

\bibliography{refferences}

\end{document}